\title{Non-hyperbolic solutions to tangle equations \\involving composite links}
\author{\normalfont Jingling Yang}
\email{}
\address{Department of Mathematics\\
The Chinese University of Hong Kong\\\newline
Hong Kong}
\urladdr{}
\newtheorem{thm1}{Theorem}
\newtheorem{thm}{Theorem}[section]
\newtheorem{lem}[thm]{Lemma}
\newtheorem{conj}[thm]{Conjecture}
\newtheorem{cor}[thm]{Corollary}
\newtheorem{prop}[thm]{Proposition}
\theoremstyle{definition}
\newtheorem{defn}[thm]{Definition}
\newtheorem*{rem}{Remark}
\numberwithin{equation}{section}
\def\ack{\section*{Acknowledgements}%
  \addtocontents{toc}{\protect\vspace{6pt}}%
  \addcontentsline{toc}{section}{Acknowledgements}%
}
\begin{document}

\begin{abstract}    % type your abstract below
Solving tangle equations is deeply connected with studying enzyme action on DNA. The main goal of this paper is to solve the system of tangle equations $N(O+X_1)=b_1$ and $N(O+X_2)=b_2 \# b_3$, where $X_1$ and $X_2$ are rational tangles, and $b_i$ is a 2-bridge link, for $i=1,2,3$, with $b_2$ and $b_3$ nontrivial. We solve this system of equations under the assumption $\widetilde{O}$, the double branched cover of $O$, is not hyperbolic, i.e.$O$ is not $\pi$-hyperbolic. Besides, we also deal with tangle equations involving 2-bridge links only under the assumption $O$ is an algebraic tangle.
\end{abstract}

\maketitle

%%%%%%%%%%%%%%%%%%%%   Start of main body of article

\section{Introduction and main theorem}

In order to study enzyme action on DNA, Ernst and Sumners \cite{Sumners} introduced tangle model, which models DNA as a knot/link and regards the enzyme action on the DNA knot/link as a tangle replacement. Fortunately, most of the DNA knot/link belong to the mathematically well-known class of 2-bridge knots/links (4-plat knots/links). Besides, based on biological experiments, it is possible to obtain a composite knot/link as a product of some enzyme action in certain situations. Therefore, the central purpose of this paper is to solve the following system of tangle equations, and the main theorem is given as follows:

\begin{thm1}
\label{mainthm}
Suppose
\begin{align*}
N(O+X_1)&=b_1          \\
N(O+X_2)&=b_2 \# b_3,
\end{align*}
where $X_1$ and $X_2$ are rational tangles, and $b_i$ is a 2-bridge link, for $i=1,2,3$, with $b_2$ and $b_3$ nontrivial. Suppose $\widetilde{O}$ is irreducible toroidal but not Seifert fibered. then the system of tangle equations has solutions if and only if one of the following holds:\\
(\romannumeral1) There exist 3 pairs of relatively prime integers $(a,b)$, $(p_1,q_1)$, and $(p,q)$ satisfying $0<\frac{b}{a}\leq 1 (or \,\, \frac{b}{a}=\frac{1}{0}=\infty), p>1,p_1>1,|aq_1-bp_1|>1$, and $q-pp_1q_1=1$ such that $b_1=b(a,b)$ and $b_2 \# b_3 =b(p,q) \#  b(a+ap_1q_1p-bp_1^2p, b+aq_1^2p-bp_1q_1p)$. Solutions up to equivalence are shown as the following:
\begin{figure}[H]
\centering
\subfigure[]{
\includegraphics[width=0.21\textwidth]{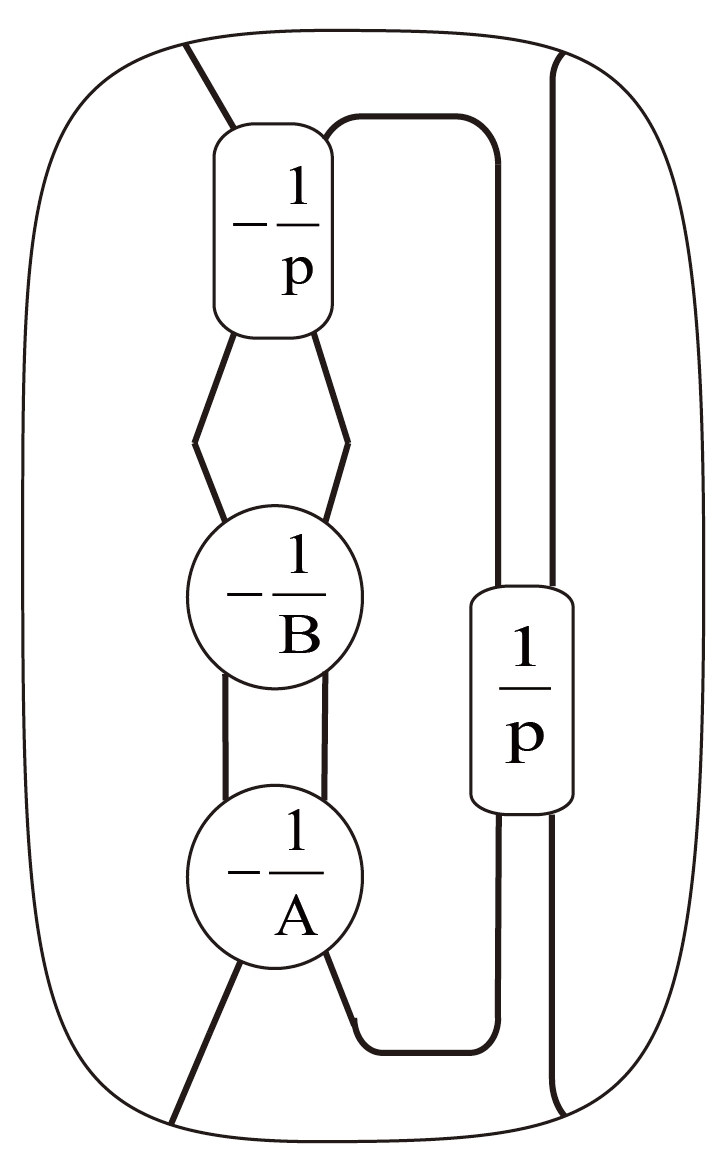}}\quad
\subfigure[]{
\includegraphics[width=0.21\textwidth]{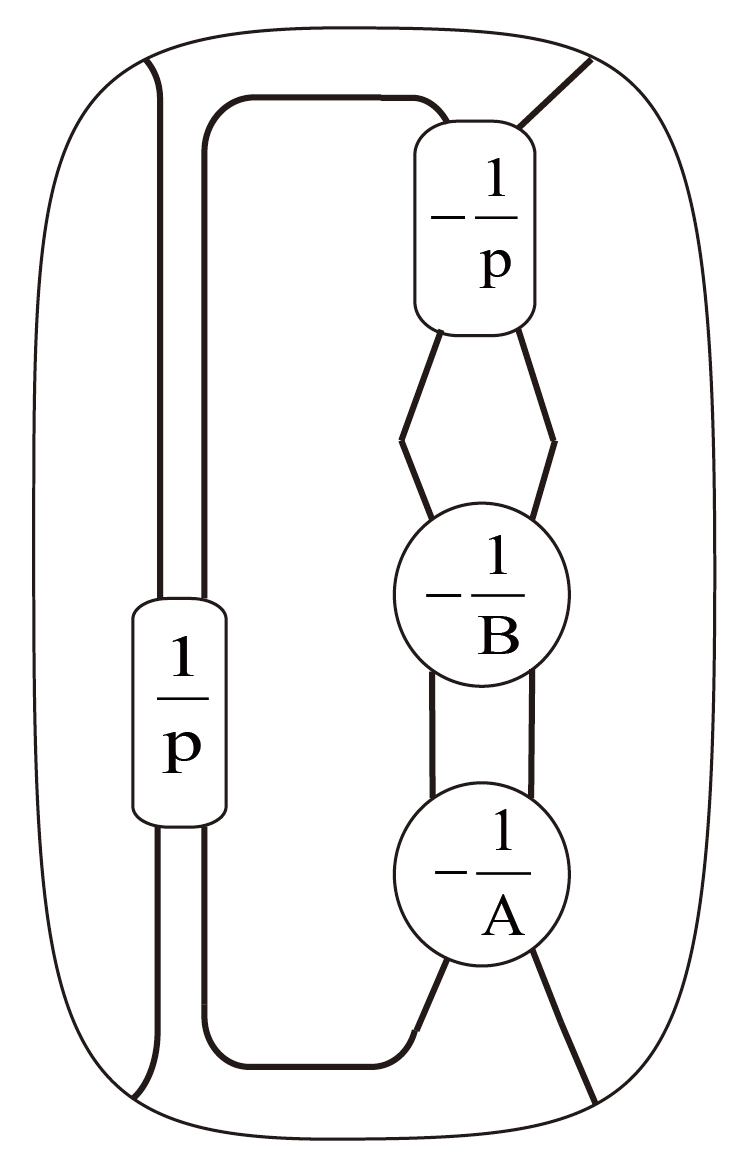}}\quad
\subfigure{
\includegraphics[width=0.15\textwidth]{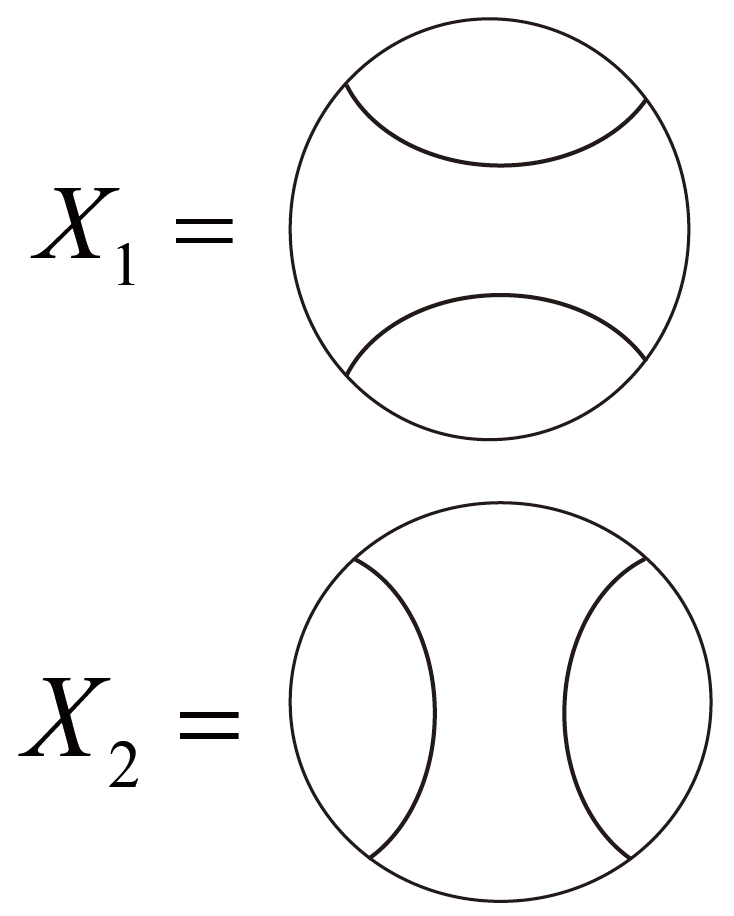}}
\caption{$O=$ the tangle in (a) or (b), where $A=\frac{-e}{p_1}$, $B=\frac{ad-be}{aq_1-bp_1}$ (or $A=\frac{ad-be}{aq_1-bp_1}$, $B=\frac{-e}{p_1}$), and $p_1d-q_1e=1$ with $d,e\in \mathbb{Z}$. $X_1=0$-tangle and $X_2=\infty$-tangle. (Note that choosing different $d$ and $e$ such that $p_1d-q_1e=1$ has no effect on the tangle $O$.)}
\end{figure}
(\romannumeral2) There exist 3 pairs of relatively prime integers $(a,b)$, $(p_1,q_1)$, and $(p,q)$ satisfying $0<\frac{b}{a}\leq 1 (or \,\, \frac{b}{a}=\frac{1}{0}=\infty), p>1,p_1>1,|aq_1-bp_1|>1$, and $q-pp_1q_1=-1$ such that $b_1=b(a,b)$ and $b_2 \# b_3 =b(p,q) \#  b(a-ap_1q_1p+bp_1^2p, b-aq_1^2p+bp_1q_1p)$. Solutions up to equivalence are shown as the following:
\begin{figure}[H]
\centering
\subfigure[]{
\includegraphics[width=0.21\textwidth]{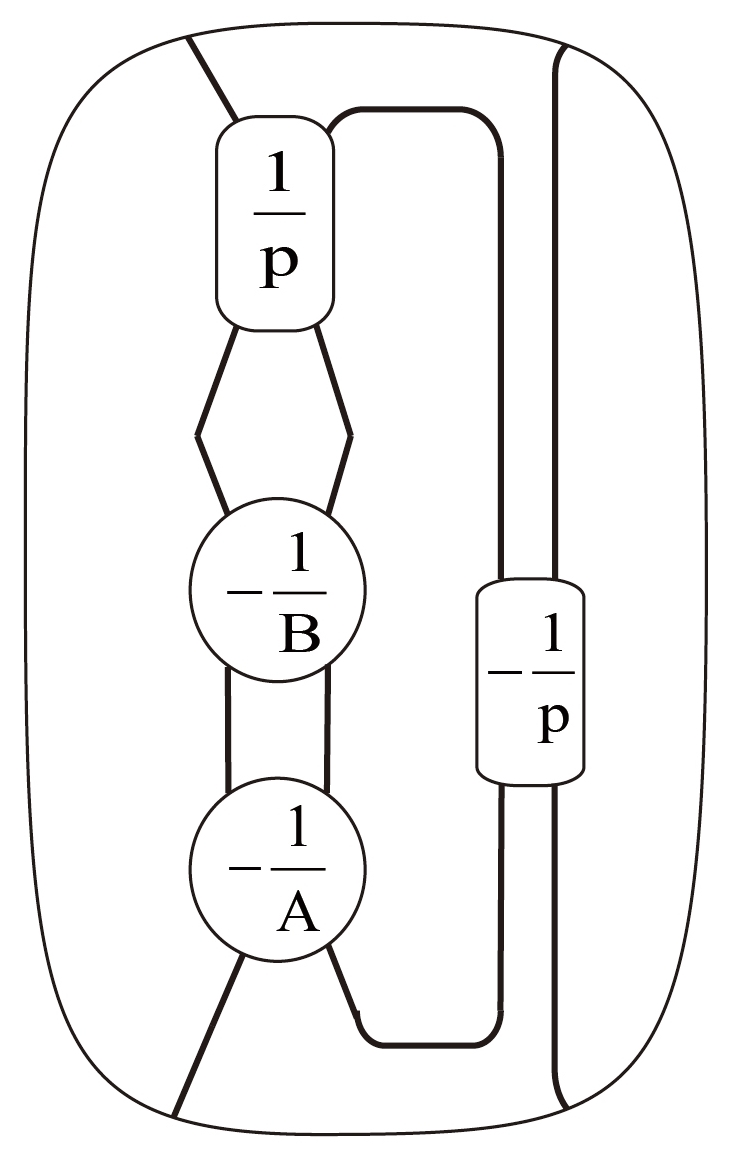}}\quad
\subfigure[]{
\includegraphics[width=0.21\textwidth]{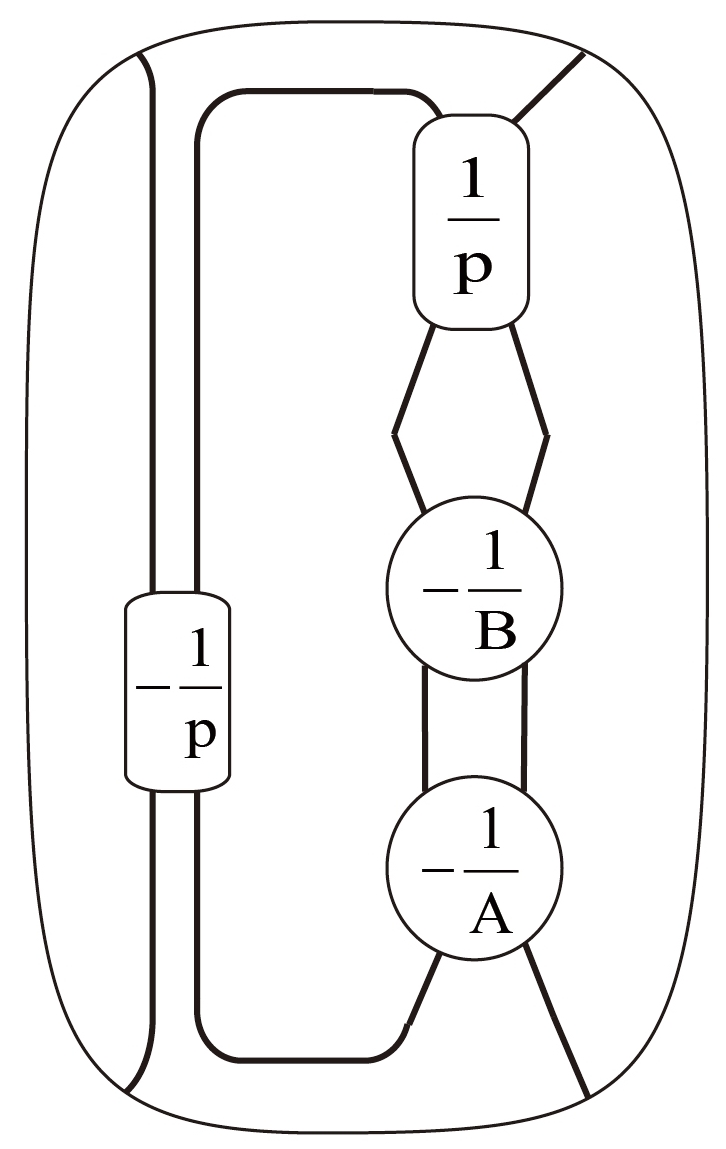}}\quad
\subfigure{
\includegraphics[width=0.15\textwidth]{solution2.jpg}}
\caption{$O=$ the tangle in (a) or (b), where $A=\frac{-e}{p_1}$, $B=\frac{ad-be}{aq_1-bp_1}$ (or $A=\frac{ad-be}{aq_1-bp_1}$, $B=\frac{-e}{p_1}$), and $p_1d-q_1e=1$ with $d,e\in \mathbb{Z}$. $X_1=0$-tangle and $X_2=\infty$-tangle. (Note that choosing different $d$ and $e$ such that $p_1d-q_1e=1$ has no effect on the tangle $O$.)}
\end{figure}

\end{thm1}

As a matter of fact, the key point to solve tangle equations is lifting to the double branched covers. When we take the "sum" of two tangles $O$ and $X$, it results in a link $K$. Lifting to their double branched covers, this operation on tangles $O$ and $X$ induces a gluing of the boundaries of their respective double branched covers, which produces a 3-manifold $\widetilde{K}$, the double cover of $S^3$ branched over $K$. The double branched cover of a rational tangle is a solid torus, so a rational tangle replacement induces a Dehn surgery on a 3-manifold. Here the 3-manifold is a lens space or a connected sum of two lens spaces since the double cover of $S^3$ branched over a 2-bridge link is a lens space. Thus, our problem turns out to be finding knots in a lens space admitting a surgery to give a connected sum of two lens spaces. Kenneth L. Baker \cite{Baker} proposed a lens space version of cabling conjecture stating that if a knot in a lens space admits a surgery to a non-prime 3-manifold then the knot is either lying in a ball, or a knot with Seifert fibered exterior, or a cabled knot, or hyperbolic. He proved the conjecture when the knot is non-hyperbolic in \cite{Baker}. Buck and Mauricio's paper \cite{Buck} addresses the problem in two cases, (1)the double branched cover of the tangle $O$, denoted by $\widetilde{O}$, is a Seifert fiber space and (2)$\widetilde{O}$ is reducible, namely $\widetilde{O}$ is the complement of a knot with Seifert fibered exterior and $\widetilde{O}$ is the complement of a knot in a ball. This paper mainly solves the aforementioned system of equations when $\widetilde{O}$ is the complement of a cabled knot, i.e.\,$\widetilde{O}$ is irreducible toroidal but not Seifert fibered. We also give solutions for the Seifert fibered case, which comprise a special case excluded in \cite{Buck}. In fact, we have a different definition of tangle from \cite{Buck}, since we allow tangle to have circles embedded in, so more solutions are given in this paper.

The following outlines the proof of Theorem \ref{mainthm}. First of all, we show that the cabled knot is a cable of a torus knot lying in one of the solid tori in the lens space, and the complement of this cabled knot is a graph manifold with only one essential torus. Then by studying Dehn surgeries along this cabled knot, we obtain all possible products $b_2 \# b_3$ when $b_1$ is given. Split the graph manifold along the essential torus into two pieces, both of which are Seifert fiber spaces. We can easily find two tangles whose double branched covers are the two pieces respectively, since it is not hard to find a Montesinos tangle (or a Montesinos pair) with this type of Seifert fiber spaces as the double branched cover. By gluing the two tangles together, we get a tangle whose double branched cover is this graph manifold. Finally, by studying the involutions on this graph manifold, we prove that any tangle whose double branched cover is this graph manifold is homeomorphic to the tangle we construct. Connecting with the analysis about Dehn fillings on this graph manifold, we give all the solutions $(O,X_1,X_2)$ of the tangle equations above when $\widetilde{O}$ is the complement of a cabled knot.

This paper is structured as follows: Section \ref{Preliminaries} provides some preliminaries about tangles, 2-bridge link and their double branched covers. Section \ref{Solving tangle equations} gives the complete process of solving tangle equations above. In Section \ref{Some other tangle equations}, we solve the system of tangle equations involving 2-bridge links only under the assumption $O$ is an algebraic tangle by using the same method as solving the aforementioned tangle equations.

\ack{I would like to express my gratitude to my supervisor Professor Zhongtao Wu for his professional guidance and helpful discussion. He introduced this subject to me and gave me many valuable suggestions. I would also like to thank Erica Flapan for introducing me algebraic tangle and Bonahon-Siebenmann theory. This work was partially supported by grants from the Research Grants Council of the Hong Kong Special Administrative Region, China (Project No. CUHK 14309016).}

\section{Preliminaries}
\label{Preliminaries}

\subsection{Tangles and 2-bridge link}

\begin{defn}
A {\it tangle} is a pair $(M,t)$ where $M$ is the complement of disjoint 3-balls in $S^3$ and $t$ is a properly embedded 1-manifold which intersects each boundary component of $M$ in 4 points.
\end{defn}

\begin{defn}
A {\it marked tangle} is a tangle $(M,t)$ with $k$ boundary components $S_1, \dots, S_k$ parameterized by a family of orientation-preserving homeomorphisms $\Phi=\cup_i \Phi_i: (\partial S_i, \partial S_i \cap t) \rightarrow (S^2, P)$, where $S^2$ is the unit sphere in $\mathbb{R}^3$ and $P=\{NE=(e^{\frac{\pi}{4}i},0),SE=(e^{\frac{-\pi}{4}i},0),SW=(e^{\frac{-3\pi}{4}i},0),NW=(e^{\frac{3\pi}{4}i},0)\}$. We denote a marked tangle as a triple $(M,t,\Phi)$.
\end{defn}

\begin{defn}
Two tangles $X=(M,t,\Phi)$ and $Y=(M',t',\Phi')$ are {\it isomorphic} if there exists a homeomorphism $H:(M,t)\rightarrow (M',t')$ such that $\Phi=\Phi'H$. $X$ and $Y$ are {\it homeomorphic} if there exists a homeomorphism $H:(M,t)\rightarrow (M',t')$.
\end{defn}

In this paper, we mainly deal with tangles in a 3-ball with a few exceptions. The following are some operations and useful results about tangles in a 3-ball.

\begin{defn}
Given two tangles $A$ and $B$ in a 3-ball, {\it tangle addition} is defined as shown in Figure \ref{tangleaddition}, denoted by $A+B$. {\it tangle multiplication} is defined as shown in Figure \ref{tanglemultiplication}, denoted by $A \times B$.
\end{defn}

\begin{figure}[H]
\centering
\subfigure[Tangle addition]{\label{tangleaddition}
\includegraphics[width=0.36\textwidth]{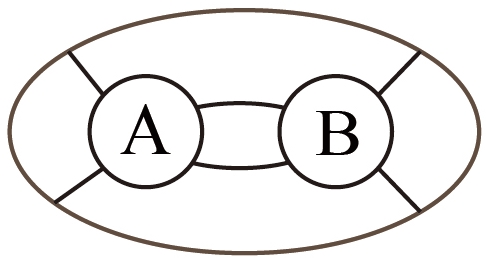}}\qquad\qquad
\subfigure[Tangle multiplication]{\label{tanglemultiplication}
\includegraphics[width=0.36\textwidth]{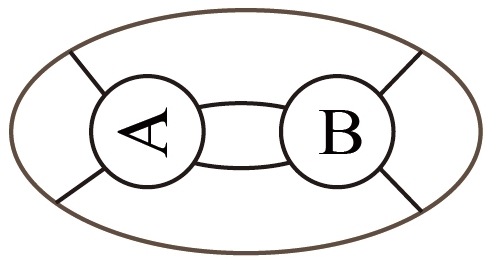}}
\caption{Tangle addition and multiplication}
\end{figure}

\begin{defn}
Given a tangle $A$ in a 3-ball, the {\it numerator closure} and {\it denominator closure} are defined as shown in Figure \ref{numerator closure} and Figure \ref{denominator closure} respectively, denoted by $N(A)$ and $D(A)$.
\end{defn}

\begin{figure}[H]
\centering
\subfigure[Numerator closure $N(A)$]{\label{numerator closure}
\includegraphics[width=0.29\textwidth]{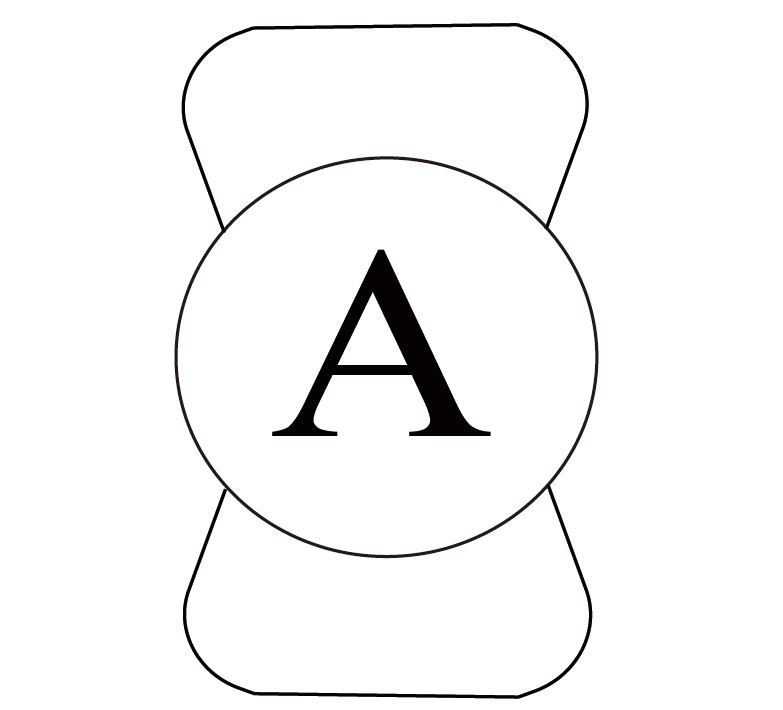}}\qquad\qquad
\subfigure[Denominator closure $D(A)$]{\label{denominator closure}
\includegraphics[width=0.36\textwidth]{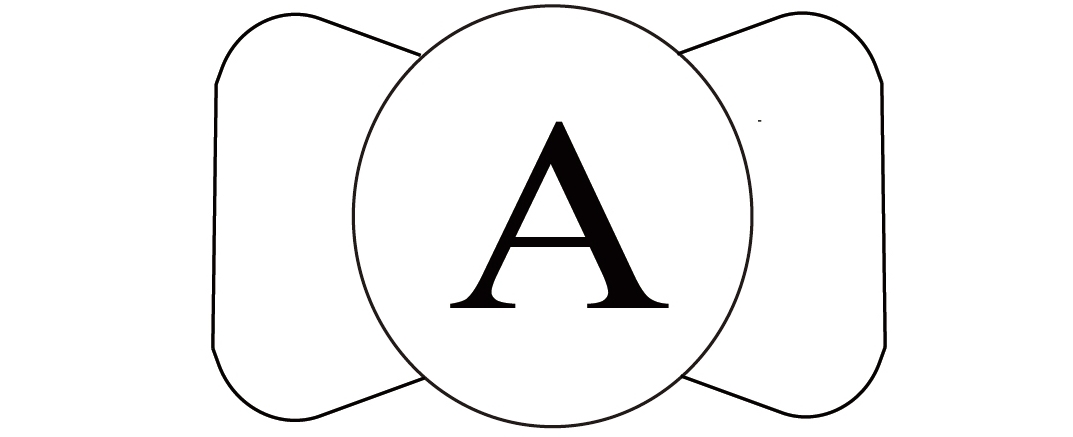}}
\caption{Numerator closure and denominator closure}
\end{figure}

\begin{rem}
$D(A+B)=D(A) \# D(B)$.
\end{rem}

\begin{defn}
The {\it circle product} of a tangle $A$ in a 3-ball and an integer vector $C=(c_1, c_2, \dots, c_n)$ is shown in Figure \ref{circleproduct}, denoted by $A \circ C$ or $A \circ (c_1, c_2, \dots, c_n)$.
\end{defn}

\begin{figure}[H]
\centering
\subfigure[$A \circ (c_1, c_2, \dots, c_n)$, when $n$ is even.]{\label{circleproduct1}
\includegraphics[width=0.335\textwidth]{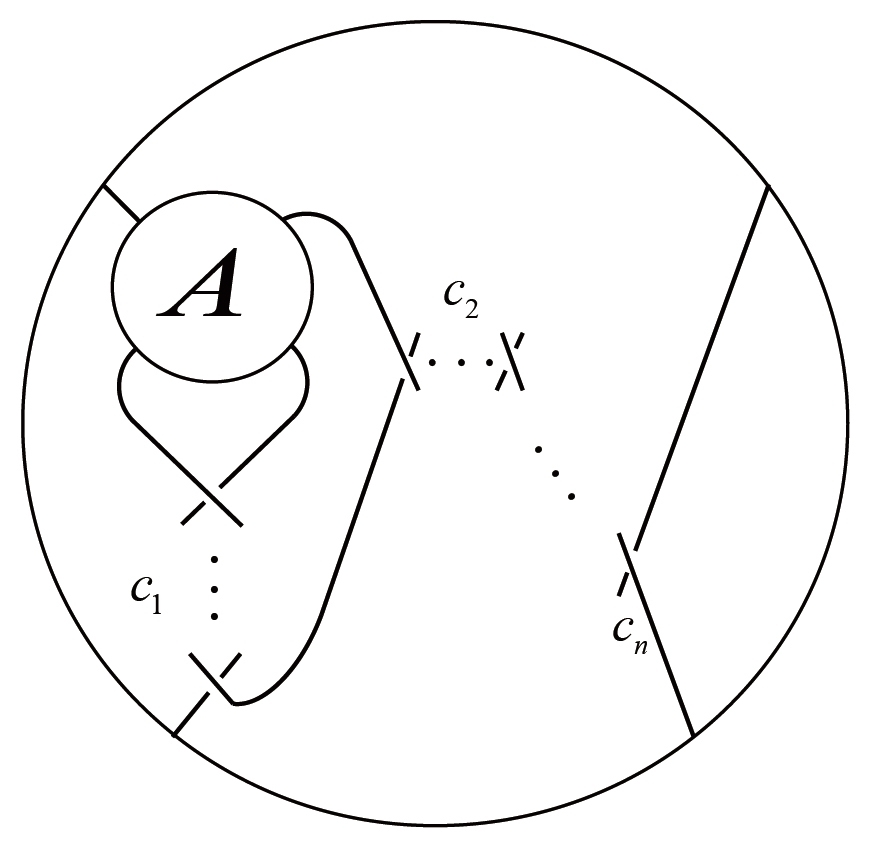}}\quad\qquad\qquad
\subfigure[$A \circ (c_1, c_2, \dots, c_n)$, when $n$ is odd.]{\label{circleproduct2}
\includegraphics[width=0.345\textwidth]{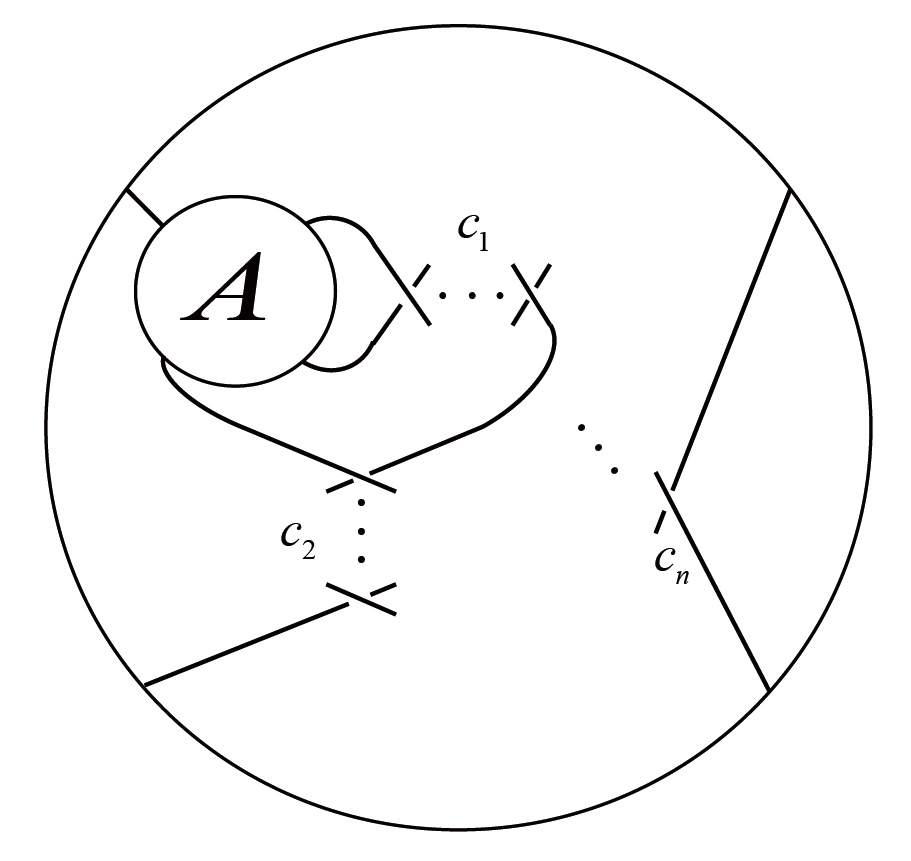}}
\caption{Circle product}\label{circleproduct}
\end{figure}

\begin{rem}
When $c_i>0$, it represents $|c_i|$ positive half twists. Conversely, $c_i<0$ represents $|c_i|$ negative half twists. Positive and negative twists are shown in the following pictures.
\end{rem}

\begin{figure}[H]
\centering
\subfigure[Positive half twist]{\label{positive half twist}
\includegraphics[width=0.21\textwidth]{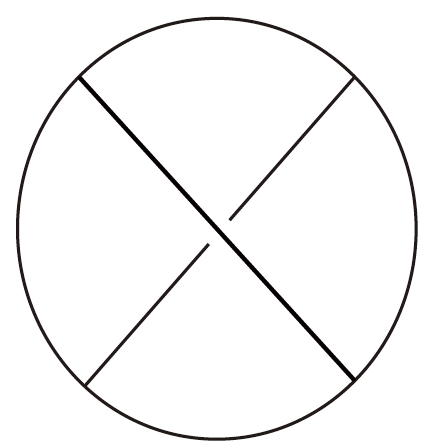}}\qquad\qquad\qquad\quad
\subfigure[Negative half twist]{\label{negative half twist}
\includegraphics[width=0.21\textwidth]{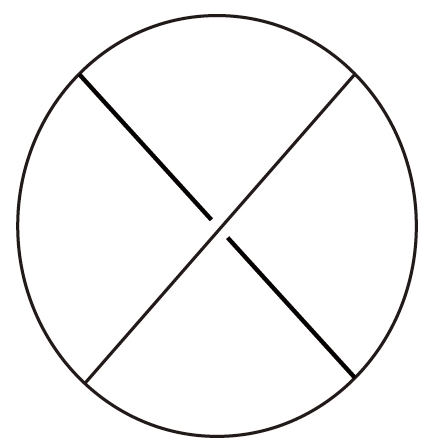}}
\caption{Half twist}
\end{figure}

\begin{defn}
A tangle $X=(B^3,t)$ is {\it rational} if it is homeomorphic to the trivial tangle $(D^2 \times I,\{x,y\} \times I)$, where $D^2$ is the unite 2-ball in $\mathbb{R}^2$ and $\{x,y\}$ are two points interior to $D^2$. See Figure \ref{trivialtangle}.
\end{defn}

Each rational tangle is isomorphic to a so-called "basic vertical tangle" which is constructed by taking the circle product of the $\infty$-tangle shown in Figure \ref{inftytangle} and an integer vector $C=(c_1,\dots, c_n)$ with $n$ even, i.e.$\infty \circ C$. The following rational tangle classification theorem tells us that rational tangles are classified, up to isomorphism, by their continued fraction:
\[\frac{\beta}{\alpha}=c_n+\frac{1}{c_{n-1}+\frac{1}{\dots +\frac{1}{c_1}}}\].

\begin{thm}[Rational Tangle Classification Theorem \cite{Conway}]
\label{Rational Tangle Classification Theorem}
There exists a 1-1 correspondence between isomorphism classes of rational tangles and the extended rational numbers $\beta / \alpha \in \mathbb{Q} \cup \{ 1/0=\infty \} $, where $\alpha \in \mathbb{N}, \,\, \beta \in \mathbb{Z}$, and $gcd(\alpha, \beta)=1$.
\end{thm}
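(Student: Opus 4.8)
\emph{Proof proposal.} The plan is to prove the classification by lifting to double branched covers, matching the methods used elsewhere in this paper. Since (as noted above) every rational tangle is isomorphic to a basic vertical tangle $\infty\circ C$ for some integer vector $C=(c_1,\dots,c_n)$, it suffices to treat these. The double branched cover of $(S^2,P)$ is a torus $T$ equipped with the elliptic (hyperelliptic) involution $\iota$; the double branched cover of any rational tangle $(B^3,t)$ is a solid torus $V$ with branching involution $\tau$, and $\tau|_{\partial V}$ is conjugate, via the lift $\widetilde\Phi$ of the marking, to $\iota$. Fixing $\widetilde\Phi$ identifies $\partial V$ with $T$; the lifts of the two standard spanning arcs of $(\partial B^3,P)$ — the ones closed off by the numerator and denominator closures — give a basis $\{\mathfrak m,\mathfrak l\}$ of $H_1(T;\mathbb Z)$, and the boundary of a meridian disk of $V$ has some slope $\beta/\alpha$ in this basis.

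First I would compute this slope for $\infty\circ C$. The $\infty$-tangle yields the "vertical" solid torus, from which one reads off slope $1/0$; each additional half-twist in a horizontal (resp.\ vertical) block of the circle product alters the boundary identification by an elementary transvection
\[
\begin{pmatrix} 1 & \pm1 \\ 0 & 1 \end{pmatrix}
\quad\text{resp.}\quad
\begin{pmatrix} 1 & 0 \\ \pm1 & 1 \end{pmatrix},
\]
and composing these along $C$ carries $1/0$ to exactly the continued fraction $c_n+\cfrac{1}{c_{n-1}+\cfrac1{\ddots+\cfrac1{c_1}}}=\beta/\alpha$. This is the routine computational step; it shows the reduced fraction $\beta/\alpha\in\mathbb Q\cup\{\infty\}$ depends only on the isomorphism class, since an isomorphism $H\colon(B^3,t)\to(B^3,t')$ with $\Phi=\Phi'H$ lifts to a homeomorphism $\widetilde H\colon V\to V'$ whose boundary restriction is $\widetilde\Phi'^{\,-1}\widetilde\Phi$ (possibly postcomposed with $\iota$, which fixes every slope); it therefore carries $\mathfrak m\mapsto\mathfrak m'$, $\mathfrak l\mapsto\mathfrak l'$ and meridian to meridian, so $\beta/\alpha=\beta'/\alpha'$. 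In particular distinct fractions give non-isomorphic tangles.

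It remains to show the induced map on isomorphism classes is onto and one-to-one. Surjectivity is immediate: every $\beta/\alpha$ has a finite integer continued fraction expansion, which is the data of a vector $C$ realizing it. For injectivity, suppose two rational tangles have equal fractions; then their double-branched-cover solid tori $V,V'$ have the same meridian slope under $\widetilde\Phi,\widetilde\Phi'$, so there is a homeomorphism $\partial V\to\partial V'$ equal to the identity under these identifications — it need only match meridians, and any homeomorphism of boundary tori matching meridians extends over the solid tori — giving $\widetilde H\colon V\to V'$. One then upgrades $\widetilde H$ to be equivariant for the branching involutions $\tau,\tau'$, so that it descends to a homeomorphism of pairs $(B^3,t)\to(B^3,t')$ compatible with the markings, i.e.\ an isomorphism of tangles.

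The hard part is exactly this last equivariance step: $\widetilde H\tau\widetilde H^{-1}$ and $\tau'$ agree on $\partial V'$ (both restrict to $\iota$, which is central under the relevant conjugation), but one must make them agree on all of $V'$. This follows from the uniqueness up to conjugacy of an involution of a solid torus whose quotient is $B^3$ with fixed-point set a pair of trivial arcs — provable by equivariant Smith theory, or directly by an innermost-arc/disk argument on an invariant meridian disk — after which $\widetilde H$ can be isotoped rel $\partial V'$ to an equivariant homeomorphism. An alternative that avoids the covering space is combinatorial: show that two integer vectors with the same continued fraction are related by a short list of elementary moves (inserting or deleting a zero entry, and replacements of the form $(\dots,a)\leftrightarrow(\dots,a\mp1,\pm1)$), and verify that each move is realized by an ambient isotopy of the tangle in $B^3$; this is more elementary but needs careful bookkeeping of the moves.
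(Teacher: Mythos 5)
The paper does not prove this statement: it is quoted as Conway's classical classification theorem and used as a black box, so there is no in-paper argument to compare yours against. Judged on its own, your proposal is the standard modern proof via double branched covers (in the spirit of Montesinos rather than Conway's original combinatorial argument), and its outline is sound: well-definedness and injectivity of the fraction via the meridian slope of the covering solid torus, surjectivity via continued-fraction expansions, and the reduction of injectivity to extending a meridian-preserving homeomorphism of solid tori equivariantly. You have also correctly isolated the one genuinely nontrivial ingredient, namely the uniqueness up to conjugacy of an involution of a solid torus whose quotient is $(B^3,\ \text{two trivial arcs})$; this is a known consequence of the equivariant loop/disk theorems (or of an invariant-meridian-disk argument), and without it the descent step fails, so it must be invoked explicitly rather than waved at. Two smaller points deserve care. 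First, before applying that uniqueness you need $\widetilde H\tau\widetilde H^{-1}$ and $\tau'$ to agree pointwise on $\partial V'$, not just up to isotopy; this is fixable because the hyperelliptic involution is central in $\mathrm{Mod}(T^2)$ and every orientation-preserving mapping class of the torus has a linear representative commuting with $-I$, but the phrase ``central under the relevant conjugation'' should be replaced by this isotopy argument. Second, your computation and surjectivity steps lean on the fact that every rational tangle is isomorphic to a basic vertical tangle $\infty\circ C$; that reduction is itself part of Conway's theorem and is only asserted, not proved, in the paper, though your injectivity argument fortunately does not need it (the double branched cover of any tangle homeomorphic to the trivial one is a solid torus). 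With those two points tightened, the proof is complete; the alternative combinatorial route you mention (elementary moves on continued-fraction vectors realized by isotopies) is closer to Conway's original and avoids three-manifold topology at the cost of heavier bookkeeping.
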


\begin{figure}[H]
\centering
\subfigure[0-tangle]{\label{0tangle}
\includegraphics[width=0.21\textwidth]{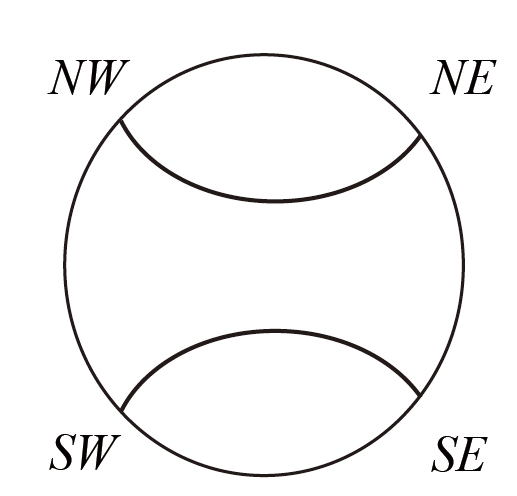}}\quad\qquad
\subfigure[$\infty$-tangle]{\label{inftytangle}
\includegraphics[width=0.22\textwidth]{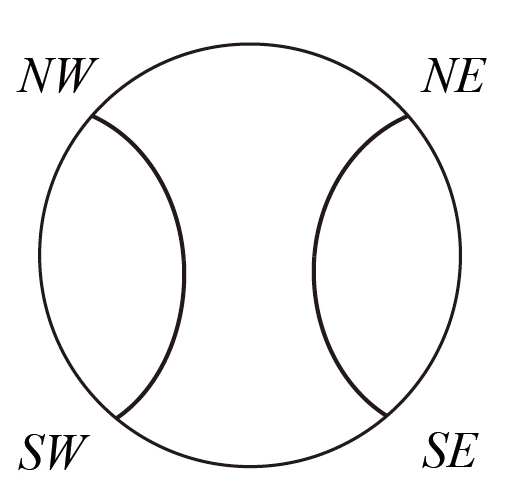}}\quad\qquad
\subfigure[The $\frac{13}{4}$-tangle]{\label{rationaltangle}
\includegraphics[width=0.22\textwidth]{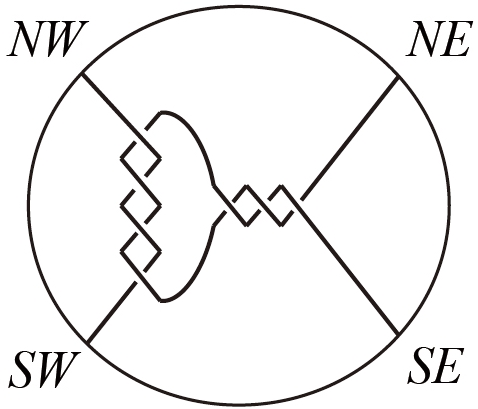}}
\caption{Rational tangles}\label{trivialtangle}
\end{figure}

%\begin{defn}
%The distance of two rational tangles $X_1=\beta_1/\alpha_1$ and $X_2=\beta_2/\alpha_2$ is defined as $|\alpha_1\beta_2-\alpha_2\beta_1|$, denoted by $d(X_1,X_2)$.
%\end{defn}

A {\it 2-bridge knot/link (4-plat knot/link or rational knot/link)} is a knot/link obtained by taking numerator or denominator closure of rational tangles. More precisely, $D(\frac{\beta}{\alpha})$ gives a 2-bridge knot/link, denoted by $b(\alpha, \beta)$. $N(\frac{\beta}{\alpha})$ gives the 2-bridge knot/link $b(\beta,-\alpha)$. In fact, the numerator or denominator closure of rational tangles also produce the unknot. For convenience, the unknot is contained in the 2-bridge knot/link class in this paper, and it is denoted by $b(1,1)$.

\begin{thm}[2-Bridge Link Classification Theorem \cite{Schubert}]
The 2-bridge knot/link $b(\alpha, \beta)$ with $\alpha>0$ is equivalent to the 2-bridge knot/link $b(\alpha', \beta')$ with $\alpha'>0$ if and only if $\alpha=\alpha'$ and $\beta^{\pm1}\equiv \beta' (mod \,\alpha)$.
\end{thm}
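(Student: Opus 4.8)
The natural route, in keeping with the double-branched-cover philosophy used throughout this paper, is to translate the classification of $2$-bridge links into the classification of lens spaces. First I would record that the double cover of $S^3$ branched over $b(\alpha,\beta)=D(\beta/\alpha)$ is the lens space $L(\alpha,\beta)$: since a rational tangle has a solid torus as its double branched cover, the branched cover of the denominator closure of $\beta/\alpha$ is obtained by gluing two solid tori along their boundary tori, and tracking the continued fraction expansion of $\beta/\alpha$ on the meridians identifies the resulting genus-one splitting as $L(\alpha,\beta)$. The ``if'' direction is then elementary and is handled directly on the diagrams: adding or deleting integer half-twists at a closure changes $\beta$ by a multiple of $\alpha$, while a standard flip symmetry of the $2$-bridge diagram (reversing the order of the defining continued fraction) realizes $\beta\mapsto\beta^{-1}\pmod\alpha$, and composing these realizes every allowed change of $\beta$.

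For the converse, suppose $b(\alpha,\beta)$ and $b(\alpha',\beta')$ are equivalent. Passing to double branched covers yields a homeomorphism $L(\alpha,\beta)\cong L(\alpha',\beta')$, so by the classification of lens spaces (Reidemeister, via Reidemeister torsion; or Bonahon's geometric argument) we get $\alpha=|H_1(L(\alpha,\beta))|=|H_1(L(\alpha',\beta'))|=\alpha'$ and $\beta'\equiv\pm\beta^{\pm1}\pmod\alpha$. To reconcile the extra sign with the statement as phrased, I would use that $L(\alpha,-\beta)$ is $L(\alpha,\beta)$ with its orientation reversed while $b(\alpha,-\beta)$ is the mirror image of $b(\alpha,\beta)$: with ``equivalent'' read as ambient homeomorphism (not necessarily orientation preserving), the two ambiguities cancel, leaving precisely $\beta'\equiv\beta^{\pm1}\pmod\alpha$.

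The one genuinely non-formal point, and the step I expect to be the main obstacle, is upgrading the homeomorphism of lens spaces to one carrying branch loci to branch loci: this requires that the covering involution of a lens space with circle fixed-point set is unique up to conjugacy, so that $b(\alpha,\beta)$ is determined up to homeomorphism of $S^3$ by the pair (lens space, involution). I would invoke the classification of involutions of lens spaces (Hodgson--Rubinstein, ultimately resting on equivariant Smith-conjecture type arguments) to close this gap. Everything else --- the identification of the branched cover, the ``if'' direction, and the arithmetic cleanup in the ``only if'' direction --- is routine continued-fraction bookkeeping. An alternative is Schubert's original combinatorial proof via essential uniqueness of the $2$-bridge sphere, which avoids lens spaces altogether but is longer and less in the spirit of the present paper.
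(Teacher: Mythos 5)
The paper does not prove this statement; it is quoted from Schubert's 1956 paper, whose original argument is the combinatorial one you mention at the end (essential uniqueness of the 2-bridge sphere). Your route --- identify the double branched cover of $b(\alpha,\beta)$ as $L(\alpha,\beta)$, quote the classification of lens spaces for the ``only if'' direction, handle the ``if'' direction by continued-fraction moves on the diagram, and invoke uniqueness up to conjugacy of the covering involution on a lens space (Hodgson--Rubinstein) to make the covering-space dictionary two-way --- is the standard modern proof and is sound in outline. It is also more in the spirit of this paper, which works with branched double covers throughout. Note, however, that once you prove the ``if'' direction diagrammatically you do not actually need the involution-uniqueness step for this particular theorem; it is needed only for the stronger statement that a 2-bridge link is determined by its branched double cover. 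As written, your proof carries a redundant branch.

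The one genuine mistake is the orientation bookkeeping at the end of the ``only if'' direction. The theorem as stated holds for equivalence by ambient isotopy (equivalently, orientation-preserving homeomorphism of $S^3$): under that reading the induced homeomorphism of branched double covers is orientation-preserving, the orientation-preserving classification of lens spaces gives exactly $\beta'\equiv\beta^{\pm1}\pmod\alpha$, and there is no extra sign to cancel. Your proposed fix --- reading ``equivalent'' as arbitrary ambient homeomorphism so that ``the two ambiguities cancel'' --- goes the wrong way and would make the statement false: the trefoil $b(3,1)$ and its mirror $b(3,2)=b(3,-1)$ become equivalent under an orientation-reversing homeomorphism of $S^3$, yet $2\not\equiv 1^{\pm1}\pmod 3$. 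Delete the cancellation argument and instead fix orientation-preserving equivalence from the outset, so that the extra $\pm$ never appears.
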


Adding two rational tangles and then taking the numerator closure also gives a 2-bridge knot/link. See the following lemma.

\begin{lem}{\rm \cite{Sumners}}
\label{rational tangle calculus}
Given two rational tangles $X_1=\beta_1 / \alpha_1$ and $X_2=\beta_2 / \alpha_2$, then $N(X_1+X_2)$ is the 2-bridge knot/link $b(\alpha_1 \beta_2 + \alpha_2 \beta_1, \alpha_1 \beta'_2 + \alpha'_2\beta_1)$, where $\beta_2 \alpha'_2 - \alpha_2 \beta'_2=1$.
\end{lem}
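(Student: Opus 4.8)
The plan is to compute the invariants of the 2-bridge link $N(X_1+X_2)$ directly by passing to the double branched cover, where the problem becomes a tractable calculation in the homology of a closed 3-manifold. Recall that a rational tangle $\beta/\alpha$ has a solid torus as its double branched cover, with the branch arcs lifting so that the meridian of the solid torus is determined by the tangle fraction; and that the double cover of $S^3$ branched over the 2-bridge link $b(\alpha,\beta)$ is the lens space $L(\alpha,\beta)$, whose fundamental group (equivalently first homology) has order $\alpha$. The numerator closure $N(X_1+X_2)$ is built by gluing the two solid tori covering $X_1$ and $X_2$ along a common boundary torus in the manner prescribed by tangle addition followed by numerator closure. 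I would first fix a basis of curves on the branch-sphere boundary that lifts to a symplectic basis $(\mu,\lambda)$ on the boundary torus of each covering solid torus, and record how the rational tangle fraction $\beta_i/\alpha_i$ specifies the slope of the meridian disk of the $i$-th solid torus in this basis.

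Next I would assemble the closed manifold. Tangle addition identifies the two boundary spheres along two of their marked points, and the double branched cover of this operation glues the two solid tori along an annulus (or along the full torus after the numerator closure caps things off), producing a closed orientable 3-manifold $\widetilde{N(X_1+X_2)}$. Since this manifold is the double branched cover of a 2-bridge link, it must again be a lens space $L(\alpha',\beta')$, and the entire content of the lemma is the identification
\begin{equation*}
(\alpha',\beta') = (\alpha_1\beta_2+\alpha_2\beta_1,\ \alpha_1\beta_2'+\alpha_2'\beta_1),
\end{equation*}
where $\beta_2\alpha_2'-\alpha_2\beta_2'=1$. To extract $\alpha'$, I would compute $|H_1|$ of the glued manifold: writing down the gluing matrix in the chosen basis in terms of the $\alpha_i,\beta_i$, the order of the first homology is the absolute value of the relevant $2\times 2$ determinant, which should collapse to $\alpha_1\beta_2+\alpha_2\beta_1$ after substituting the meridian slopes. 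This pins down the first coordinate and simultaneously confirms we land in the 2-bridge class $b(\alpha_1\beta_2+\alpha_2\beta_1,\cdot)$.

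The second coordinate $\beta'$ is more delicate, since by the 2-Bridge Link Classification Theorem it is only well-defined up to $\beta'{}^{\pm1}\pmod{\alpha'}$, and this is where I expect the main obstacle to lie. The auxiliary integers $\alpha_2',\beta_2'$ enter precisely because a lens space does not canonically specify its second parameter; the relation $\beta_2\alpha_2'-\alpha_2\beta_2'=1$ encodes a choice of framing (a meridian-longitude basis completing the meridian slope of $X_2$), and one must verify that the expression $\alpha_1\beta_2'+\alpha_2'\beta_1$ is the residue arising from the gluing and, crucially, that any other admissible choice of $(\alpha_2',\beta_2')$ changes $\beta'$ only by an integer multiple of $\alpha'$, hence not at all modulo $\alpha'$. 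I would handle this by tracking the longitude slope through the gluing matrix and reading off $\beta'$ as the image of the meridian under the surgery presentation, then checking the well-definedness by replacing $(\alpha_2',\beta_2')$ with $(\alpha_2'+k\alpha_2,\beta_2'+k\beta_2)$ and confirming the two $\beta'$ values agree mod $\alpha'$. An alternative, and perhaps cleaner, route I would keep in reserve is to argue entirely at the level of continued fractions: realize both rational tangles in their standard twist form, compute the continued fraction of the sum $X_1+X_2$ using the rational tangle calculus, and then invoke the correspondence between continued fractions and the $b(\alpha,\beta)$ parameters, which sidesteps the covering-space bookkeeping at the cost of a more involved arithmetic verification that the two formulas for $(\alpha',\beta')$ coincide.
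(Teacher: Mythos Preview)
The paper does not prove this lemma; it is quoted from Ernst--Sumners and stated without proof, so there is no argument in the paper to compare against.

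That said, your strategy is sound and would produce a proof. The cleanest way to organize the covering-space computation is to note that $N(X_1+X_2)$ can be obtained by identifying the boundary sphere of $X_1$ directly with that of $X_2$ via a single fixed homeomorphism of $(S^2,4\ \text{points})$ (the one encoding ``add and then take numerator closure''); this lifts to a gluing of the two covering solid tori along their entire torus boundaries, so the result is visibly a lens space and both parameters come out of a single $2\times 2$ gluing matrix. In the paper's conventions the meridian of $\widetilde{X_i}$ has slope $\alpha_i[\widetilde l]+\beta_i[\widetilde m]$, and the pair $(\alpha_2',\beta_2')$ with $\beta_2\alpha_2'-\alpha_2\beta_2'=1$ is exactly the second column completing $(\alpha_2,\beta_2)$ to an element of $SL(2,\mathbb Z)$, i.e.\ a choice of longitude on $\partial\widetilde{X_2}$; this is why it appears in $\beta'$ and why your ambiguity check (replacing $(\alpha_2',\beta_2')$ by $(\alpha_2'+k\alpha_2,\beta_2'+k\beta_2)$) is the right thing to do. Your intermediate description of the cover of $X_1+X_2$ before closure as two solid tori glued along an annulus is correct but unnecessary once you take this viewpoint. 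The continued-fraction alternative you keep in reserve is closer in spirit to how the result is derived in the original reference, so either route is acceptable.
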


\begin{defn}
 The {\it Montesinos tangle} $\left( \frac{\beta_1}{\alpha_1}, \dots, \frac{\beta_i}{\alpha_i}, \dots, \frac{\beta_n}{\alpha_n} \right)$, where $n \geq 2$, is obtained from adding $n$ rational tangles $\beta_i / \alpha_i$ together, that is $\left(\frac{\beta_1}{\alpha_1}, \dots, \frac{\beta_i}{\alpha_i}, \dots, \frac{\beta_n}{\alpha_n} \right)=\frac{\beta_1}{\alpha_1}+ \dots +\frac{\beta_i}{\alpha_i}+ \dots +\frac{\beta_n}{\alpha_n}$. See Figure \ref{Montesinostangle}.
\end{defn}

\begin{figure}[H]
\centering
\includegraphics[width=2.2in]{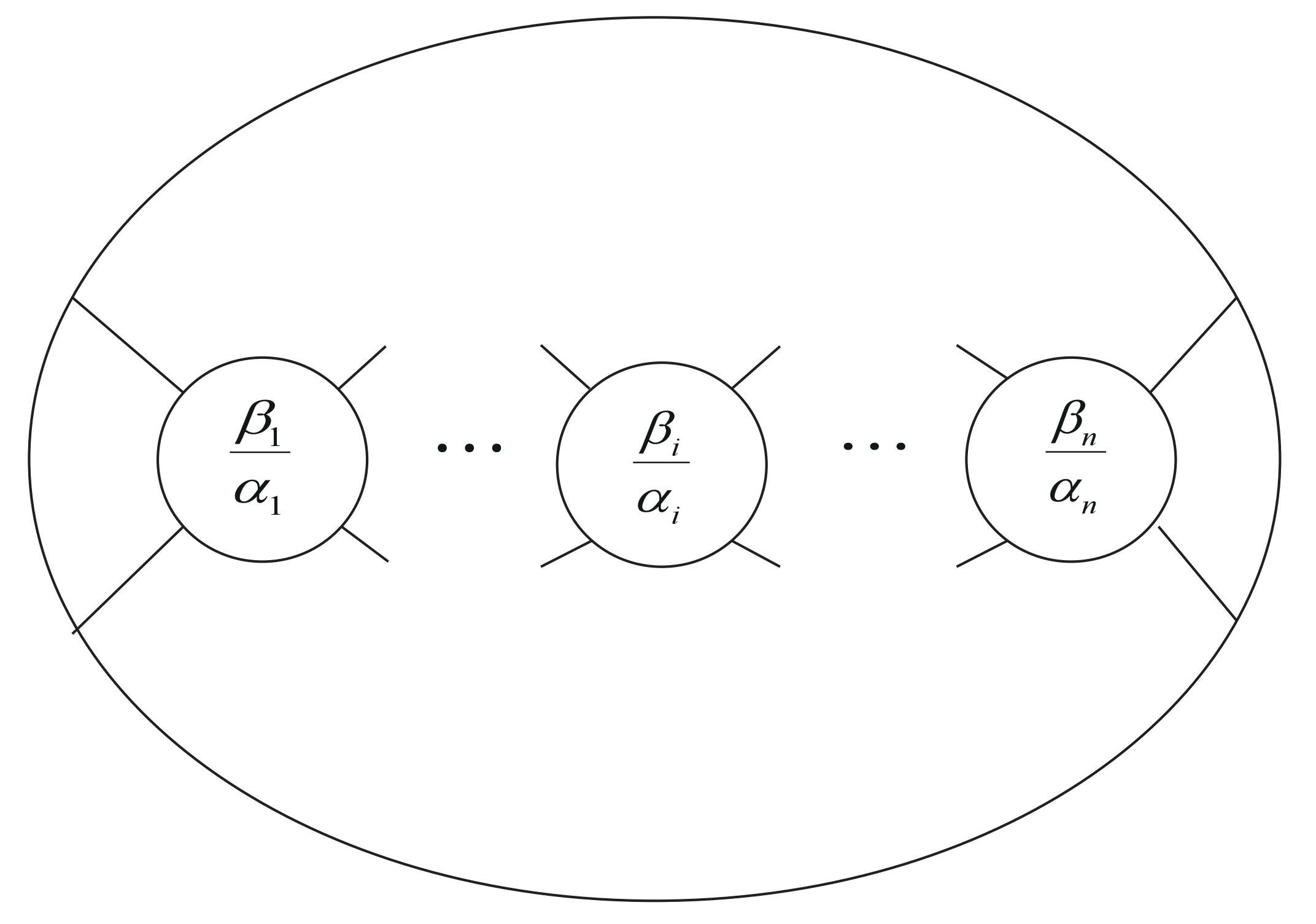}
\caption{Montesinos tangle \label{Montesinostangle}}
\end{figure}

\begin{defn}
A {\it Generalized Montesinos tangle} is a tangle obtained by taking the circle product of a Montesinos tangle $(A_1, \dots, A_n)$ and an integer entry vector $C=(c_1, c_2, \dots, c_m)$, where $A_i$ is a rational tangle for $i=1,\dots,n$, and $m \in \mathbb{Z}^{+}$, denoted by $(A_1+ \dots +A_n) \circ C$. See Figure \ref{generalizedmontesinostangle}.
\end{defn}

\begin{figure}[H]
\centering
\includegraphics[width=2.3in]{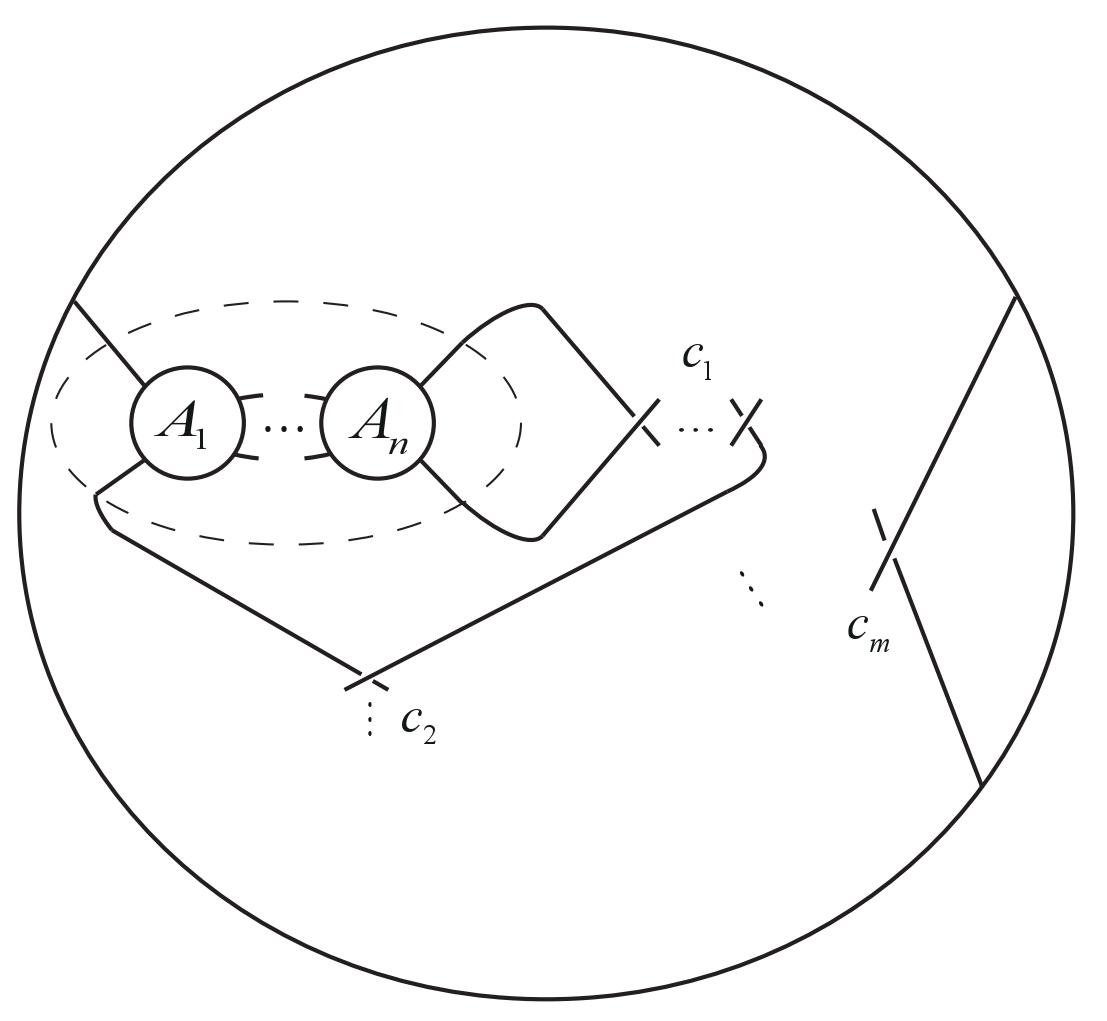}
\caption{Generalized Montesinos tangle \label{generalizedmontesinostangle}}
\end{figure}

\begin{defn}
An {\it algebraic tangle} is a tangle obtained by preforming the operations of tangle addition and multiplication on rational tangles.
\end{defn}

\begin{rem}
Algebraic tangle includes all of rational tangles, Montesinos tangles and generalized Montesinos tangles.
\end{rem}

\begin{defn}
A tangle $X=(B^3,t)$ is {\it locally knotted} if there exists a local knot in one of the strands. More precisely, there exists a 2-sphere in $X$ intersecting $t$ transversely in 2 points, such that the 3-ball it bounds in $X$ meets $t$ in a knotted arc.
\end{defn}

Now we introduce a special type of tangle which is not necessarily in a 3-ball. Following Bonahon and Siebenmann \cite{Bonahon}, we call it a Montesinos pair instead of a tangle, since there is a specifying definition of a Montesinos tangle. Note that the previous Montesinos tangles and generalized Montesinos tangles are included in Montesinos pair.

\begin{defn}
A {\it Montesinos pair} is homeomorphic to a tangle constructed from a tangle of the type (a) or (b) shown in Figure \ref{Montesinospair} by plugging some of the holes with rational tangles.
\begin{enumerate}[(1)]
\item If a Montesinos pair $M$ is built from the tangle of the type (a) with rational tangle $\frac{\alpha_i}{\beta_i}$ plugging in from left to right, and with $k$ boundary components(i.e.$k$ holes with no rational tangle plugging in), then we denote $M$ as $M=(0,k;\frac{\alpha_1}{\beta_1}, \dots, \frac{\alpha_N}{\beta_N})$. If there is no rational tangle plugging in some of the holes, then we fill in the symbol $\varnothing$ instead of a rational number.
\item If a Montesinos pair $M$ is built from the tangle of the type (b) with rational tangle $\frac{\alpha_i}{\beta_i}$ plugging in from left to right, and with $k$ boundary components, then we denote $M$ as $M=(-1,k;\frac{\alpha_1}{\beta_1}, \dots, \frac{\alpha_N}{\beta_N})$. If there is no rational tangle plugging in some of the holes, then we fill in the symbol $\varnothing$ instead of a rational number.
\end{enumerate}
\end{defn}

\begin{figure}[H]
\centering
\subfigure[]{\label{Montesinospaira}
\includegraphics[width=0.38\textwidth]{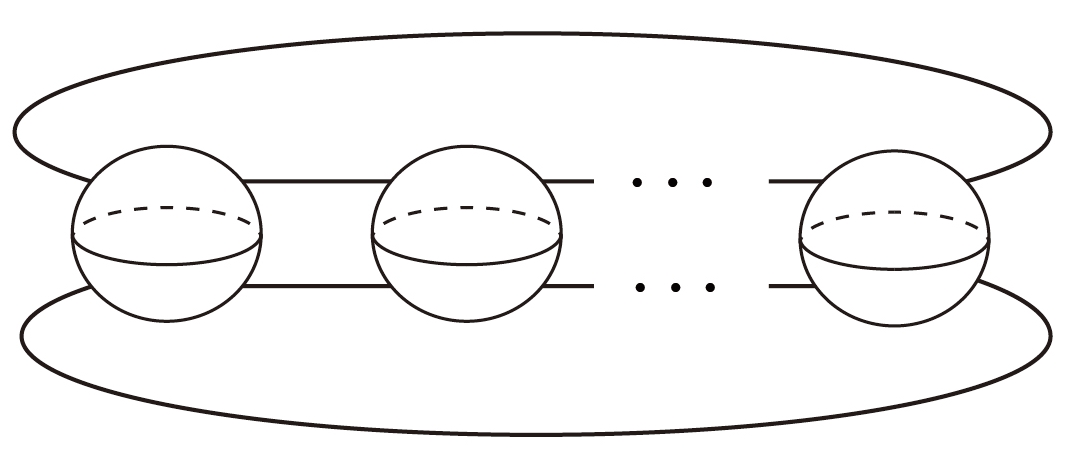}}\qquad
\subfigure[]{\label{Montesinospairb}
\includegraphics[width=0.4\textwidth]{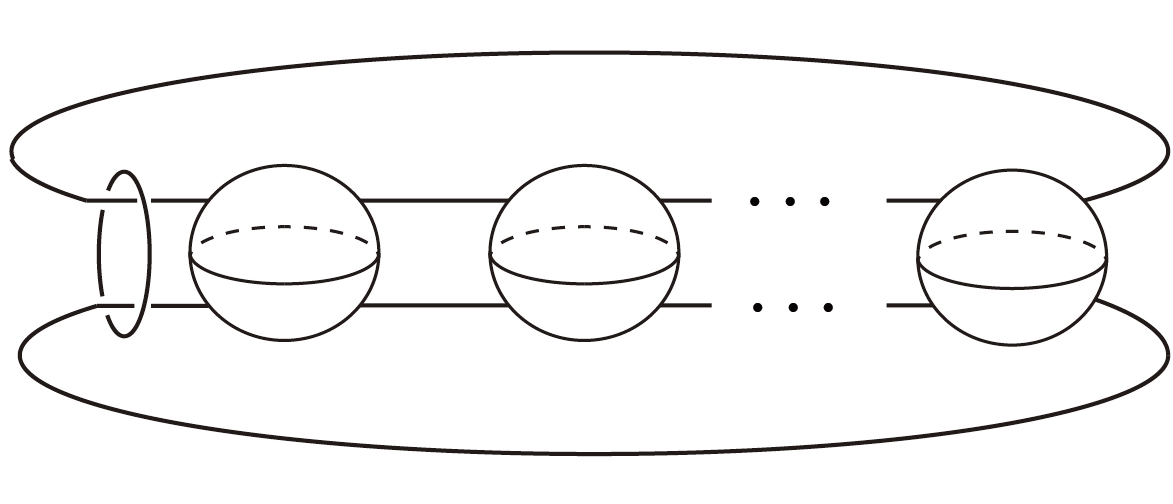}}\qquad
\subfigure[The ring tangle]{\label{ringtangle}
\includegraphics[width=0.18\textwidth]{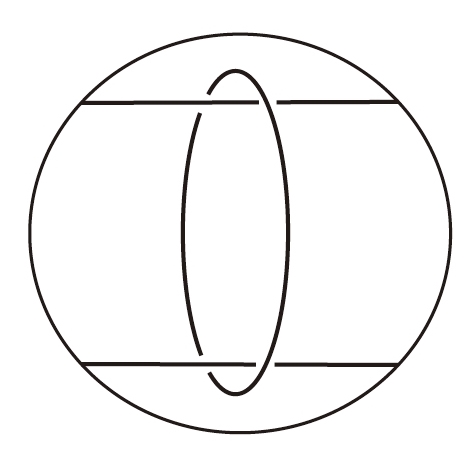}}
\caption{Montesinos pairs built from the type (a) (resp.(b)) contain no ring tangle (resp. one ring tangle). The ring tangle is a tangle shown in (c).}\label{Montesinospair}
\end{figure}

\begin{defn}
A {\it Conway sphere} in a tangle $(M,t)$ is a 2-sphere in int$M$ which intersects $t$ transversely in 4 points.
\end{defn}

\subsection{Double branched covers}
\label{section of Double branched covers}

The key point to solve tangle equations is considering their corresponding double branched covers. When we perform tangle addition and then take the numerator closure on two tangles $O$ and $X$, it gives a knot $K$. Lifting to their double branched covers, these operations induce a gluing of the boundaries of their respective double branched covers, $\widetilde{O}$ and $\widetilde{X}$. It yields a 3-manifold $\widetilde{K}$, the double cover of $S^3$ branched over $K$:\\
\centerline{$N(O+X)=K\Longrightarrow \widetilde{O} \cup_h \widetilde{X}=\widetilde{K}$}
where $h: \partial \widetilde{O} \rightarrow \partial \widetilde{X}$ is the gluing map. In particular, when $X$ is a rational tangle, this tangle equation corresponds to a Dehn filling on $\widetilde{O}$.

A {\it slant} on $(S^2, P=4 \, points)$ is the isotopy class of essential simple closed curves in $S^2-P$. Let $f: T^2 \rightarrow S^2$ be the double covering map branched over $P$ (It is well known that the double cover of $S^2$ branched over $P$ is a torus). For each slant $c$, $f^{-1}(c)$ are two parallel simple closed curves in $T^2$, and we denote an arbitrary one as $\widetilde{c}$. Let $m$, $l$ be the slants on $(S^2, P)$ shown as the following figure:

\begin{figure}[H]
\centering
\includegraphics[width=1.1in]{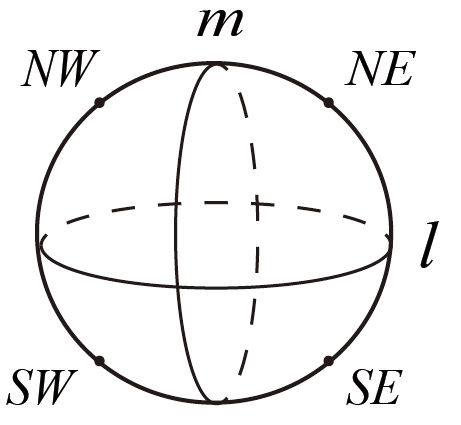}
\end{figure}

Orient $\widetilde{m}$ and $\widetilde{l}$ so that $\widetilde{m} \cdot \widetilde{l}=1$ with respect to the orientation of the torus. Then $[\widetilde{m}]$ and $[\widetilde{l}]$ form a basis of $H_1(T^2)$. In fact, there are bijections \{slants on $(S^2,P)$\} $\leftrightarrow$ \{slopes on $T^2$\} $\leftrightarrow$ \{$\mathbb{Q} \cup \infty$\}. For example, the slant $l$ corresponds to the rational number $0$, and $m$ corresponds to $\infty$. For a $\frac{q}{p}$ slant on $(S^2,P)$, we can construct it by firstly taking $p$ parallel copies of $l$ and then performing a $\frac{\pi q}{p}$ twist along $m$. This $\frac{q}{p}$ slant lifts to a $p[\widetilde{l}]+q[\widetilde{m}]$ slope on $T^2$.

For each rational tangle, there exists a properly embedded disk that separates the tangle into two 3-balls each containing an unknotted arc. We call this disk a {\it meridional disk}, and it can be shown that meridian disk is unique up to isotopy. The boundary of the meridional disk for a rational tangle is a slant on the boundary of the tangle, called the {\it meridian} of this rational tangle. Also the meridian of a rational tangle is unique up to isotopy. In fact the fraction corresponding to the meridian for a given rational tangle coincides with its continued fraction which we use to classify rational tangles in Theorem \ref{Rational Tangle Classification Theorem}. Also It is well known that the double branched cover of a rational tangle is a solid torus. So there is a 1-1 correspondence between plugging one boundary component of a tangle in the rational tangle $q / p$ and the $q / p$ Dehn filling (under $[\widetilde{m}]$ and $[\widetilde{l}]$ basis) of the corresponding torus boundary component in the double branched cover of the tangle.

Now we give some useful results about double branched covers.\\
\begin{align*}
&X: a \,\, tangle \,\, (B^3,t)           &&\widetilde{X}: the \,\, double \,\, branched \,\, cover \,\, of \,\, B^3 \,\, with \,\, branched \,\, set \,\, t.\\
&K: a \,\, link \,\, in \,\, S^3 \,\,    &&\widetilde{K}: the \,\, double \,\, branched \,\, cover \,\, of \,\, S^3 \,\, with \,\, branched \,\, set \,\, K.\\
&Y: a \,\, tangle \,\, (M,t)     &&\widetilde{Y}: the \,\, double \,\, branched \,\, cover \,\, of \,\, M \,\, with \,\, branched \,\, set \,\, t.
\end{align*}

\begin{itemize}
  \item A tangle $X$ is rational if and only if $\widetilde{X}$ is a solid torus.
  %\item A tangle $X$ is prime if and only if $\widetilde{X}$ is an irreducible and boundary irreducible 3-manifold with one tours boundary.
  \item A knot/link $K$ is a 2-bridge knot/link if and only if $\widetilde{K}$ is a lens space. The double branched cover of the 2-bridge knot/link $b(p,q)$ is the lens space $L(p,q)$.
  \item The double branched cover of $K_1 \# K_2$ is $\widetilde{K_1} \# \widetilde{K_2}$, where $K_i$ is a knot/link for $i=1,2$ \cite{Buck}.
  \item The double branched cover of a locally knotted tangle is a reducible manifold.
  \item The double branched cover of an algebraic tangle is a graph manifold with one torus boundary (a graph manifold is a 3-manifold which is obtained by gluing some circle bundles) \cite{Bonahon}.
  \item The double branched cover of a Montesinos pair $Y=(M,t)$ is a generalized Seifert fiber space with orbit surface of genus 0 or -1. If $Y=(0,k;\frac{\beta_1}{\alpha_1}, \dots, \frac{\beta_N}{\alpha_N})$, then $\widetilde{Y}=M(0,k;(\alpha_1,\beta_1), \dots, (\alpha_N,\beta_N))$. If $Y=(-1,k;\frac{\beta_1}{\alpha_1}, \dots, \frac{\beta_N}{\alpha_N})$, then $\widetilde{Y}=M(-1,k;(\alpha_1,\beta_1), \dots, (\alpha_N,\beta_N))$ \cite{Bonahon}.
  \item A Seifert fiber space over a disk with $n$ exceptional fibers is the double branched cover of a tangle, then the tangle is a generalized Montesinos tangle with $n$ rational tangle summands \cite{C.ernst}.
\end{itemize}

Generalized Seifert fiber space is defined similarly to Seifert fiber space, and it comprises all true Seifert fiber spaces. The only difference is that the exceptional fiber of a generalized Seifert fiber space may have the coefficient $(0,1)$. For more details, please see \cite{JN}.

Each double branched covering map is induced by an involution. We call the involution of a torus with 4 fixed points the {\it standard involution}. As mentioned above, $M(0,k;(\alpha_1,\beta_1), \dots, (\alpha_N,\beta_N))$ is the double branched cover of the Montesinos pair $Y=(0,k;\frac{\beta_1}{\alpha_1}, \dots, \frac{\beta_N}{\alpha_N})$. We call the non-trivial covering transformation corresponding to this double branched cover the {\it standard involution} on $M(0,k;(\alpha_1,\beta_1), \dots, (\alpha_N,\beta_N))$.

\section{Solving tangle equations}
\label{Solving tangle equations}

The main goal of this paper is to solve the following tangle equations:

\begin{align}
N(O+X_1)&=b_1          \label{eq1}\\
N(O+X_2)&=b_2 \# b_3,  \label{eq2}
\end{align}

where $X_1$ and $X_2$ are rational tangles, and $b_i$ is a 2-bridge link, for $i=1,2,3$, with $b_2$ and $b_3$ nontrivial.

Given the above tangle equations, we also say that the knot/link $b_2 \# b_3$ can be obtained from $b_1$ by an $(X_1, X_2)$-move. Let $X_i$ and $X'_i$ be rational tangles for $i=1,2$. An $(X_1,X_2)$-move is {\it equivalent} to an $(X'_1,X'_2)$-move if for any two knots/links $K_1$ and $K_2$, there exists a tangle $O$ satisfying tangle equations $N(O+X_1)=K_1$ and $N(O+X_2)=K_2$, if and only if there exists a tangle $O'$ such that $N(O'+X'_1)=K_1$ and $N(O'+X'_2)=K_2$. As discussed in \cite{Darcy}, any solution $(O,X_1,X_2)$ of the above tangle equations is equivalent to a solution $(O',X'_1=0,X'_2)$. If one solution $(O,X_1,X_2)$ satisfying $X_1=0$-tangle is given, there is a standard algorithm to give all equivalent solutions. For more details, please see \cite{Darcy}. So we only need to solve the above tangle equations assuming $X_1=0$-tangle.

Lifting to the double branched covers, the system of tangle equations can be translated to
\begin{align}
&\widetilde{O}(\theta)= the \,\, lens \,\, space \quad \widetilde{b_1}                                                        \label{eq3}\\
&\widetilde{O}(\eta)= the \,\, connnected \,\, sum \,\, of \,\, two \,\, lens \,\, spaces \,\, \widetilde{b_2} \,\,and \,\, \widetilde{b_3} \quad \widetilde{b_2} \# \widetilde{b_3},  \label{eq4}
\end{align}

where $\widetilde{O}$ (resp.$\widetilde{b_i}$) denotes the double branched cover of $O$(resp.$b_i$), and $\theta$ (resp.$\eta$) is the induced Dehn filling slope by adding rational tangle $X_1$(resp.$X_2$) since $\widetilde{X_i}$ is a solid torus. Therefore, the problem turns out to be finding knots in the lens space $\widetilde{b_1}$ which admit a surgery to a connected sum of two lens spaces. Here $\widetilde{X_1}$ is a tubular neighborhood of the knot in $\widetilde{b_1}$, and $\widetilde{O}$ is the complement of the knot.

In fact, Kenneth L.Baker gave a lens space version of cabling conjecture.

\begin{conj}(The Lens Space Cabling Conjecture \cite{Baker}) Assume a knot $K$ in a lens space $L$ admits a surgery to a non-prime 3-manifold $Y$. If $K$ is hyperbolic, then $Y=L(r,1)\#L(s,1)$. Otherwise either $K$ is a torus knot, a Klein bottle knot, or a cabled knot and the surgery is along the boundary slope of an essential annulus in the exterior of $K$, or $K$ is contained in a ball.
\end{conj}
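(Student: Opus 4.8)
The plan is to reduce the conjecture to the geometry of the knot exterior $M = L \setminus \mathrm{int}\,N(K)$ and to dispose of each geometric type separately, treating the non-hyperbolic and hyperbolic alternatives of the statement in tandem. Since a reducible (more generally non-prime) filling is an exceptional filling, the engine of the whole argument is a study of how a reducing sphere in the filled manifold $Y$ must meet the surgery solid torus $V$. First I would record that $M$ belongs to exactly one of four classes: reducible, Seifert fibered, toroidal but not Seifert fibered, or hyperbolic. The first three classes will yield the three non-hyperbolic conclusions (ball, torus/Klein bottle knot, cabled knot), while the fourth must be forced into the conclusion $Y = L(r,1)\#L(s,1)$.

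For the non-hyperbolic classes I would argue as follows. If $M$ is reducible, an essential sphere in $M$ bounds a ball on one side in $L$ (otherwise $L$ would be reducible, which it is not), forcing $K$ into a ball. If $M$ is Seifert fibered, the classification of Seifert fibered knot exteriors shows $K$ is a torus knot or a Klein bottle knot; filling a Seifert fibered space produces a non-prime manifold only along the fiber slope, which is exactly the boundary slope of a vertical essential annulus, matching the statement. If $M$ is toroidal but not Seifert fibered, I would split $M$ along its canonical tori: the piece meeting $\partial M$ is then a cable space, so $K$ is a cabled knot and the reducing slope is the slope of the cabling annulus. This is precisely the analysis carried out in the body of the present paper (and, at the level of the lens space, by Baker \cite{Baker}), so these cases are in hand.

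The hard part will be the hyperbolic case, and here I must prove the sharp conclusion $Y = L(r,1)\#L(s,1)$, not merely that $Y$ is non-prime. Since $M$ is now irreducible and atoroidal, every reducing sphere $S$ in $Y$ meets $V$; after isotoping $S$ to minimize $|S \cap V|$, the components of $S \cap V$ are meridian disks and $P = S \cap M$ is an essential planar surface all of whose boundary curves carry the surgery slope. I would then form the two associated graphs -- the graph in $S$ whose fat vertices are the disks $S \cap V$ and whose edges are the arcs of $P$, together with its dual in $\partial M$ -- and feed them into the combinatorial machinery of Gordon--Luecke and Scharlemann to bound $|S \cap V|$ and to read off the cut-and-paste structure of $P$. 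Capping $P$ off on each side exhibits the two summands of $Y$ as Dehn fillings of simpler pieces, reducing the problem to identifying those fillings.

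The crux, on which the entire hyperbolic case rests, is to show that each summand is a lens space of the special form $L(\,\cdot\,,1)$, and this is where I would concentrate the real work. First I would extract a homological constraint: a reducing sphere splits $L$ into two sides, and a linking-number computation in $H_1$ should force each side to present a cyclic first homology with a generator of self-linking $1/r$, the signature of $L(r,1)$. The decisive step, however, is Floer-theoretic. The Heegaard Floer correction terms are additive under connected sum and are computed from the surgery formula applied to $M$; matching the $d$-invariants of $Y = L(r,1)\#L(s,1)$ against those of every competing candidate -- by means of a changemaker-type lattice embedding in the spirit of Greene's resolution of the lens space realization problem, with the Casson--Walker invariant supplying a parity obstruction -- should eliminate all summands other than $L(r,1)$. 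The main obstacle is exactly this last elimination: the combinatorial bounds constrain only the complexity of $S$, not the homeomorphism type of the resulting summands, so the hyperbolic alternative stands or falls with making the lattice-embedding obstruction sharp enough to isolate $L(r,1)$. This is the hyperbolic analogue of the Gonz\'alez--Acu\~na--Short cabling conjecture, and completing it is the decisive step I would aim to carry through.
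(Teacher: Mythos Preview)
The statement you are trying to prove is not a theorem of the paper at all: it is quoted as a \emph{conjecture} of Baker, and the paper makes no attempt to prove it. More importantly, the paper explicitly records (immediately after stating the conjecture) that Gainullin \cite{Fyodor Gainullin} has constructed a counterexample to the hyperbolic alternative. Thus the assertion ``if $K$ is hyperbolic then $Y=L(r,1)\#L(s,1)$'' is now known to be \emph{false}, and any proof strategy for that clause --- in particular your combination of Gordon--Luecke graph combinatorics with $d$-invariant and changemaker obstructions --- is doomed in advance. The ``decisive step'' you describe, eliminating all summands other than $L(r,1)$, cannot possibly be carried through, because there exist hyperbolic knots in lens spaces with reducible surgeries whose summands are not of this form.

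What the paper actually uses is only the non-hyperbolic portion of the conjecture, which Baker proved in \cite{Baker}: if $K$ is non-hyperbolic and admits a reducible surgery, then $K$ lies in a ball, or is a torus knot, Klein bottle knot, or cabled knot with the surgery along the cabling slope. Your sketch of that part --- splitting into the reducible, Seifert fibered, and toroidal-but-not-Seifert cases --- is along the right lines and is essentially what Baker does. But you should not present this as a proof of the full conjecture; rather, the correct framing is that the non-hyperbolic case is a theorem of Baker, the hyperbolic case is false, and the paper proceeds under the standing hypothesis that $\widetilde{O}$ is not hyperbolic precisely so that Baker's theorem applies.
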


The non-hyperbolic case has been proved by Baker in \cite{Baker}, although Fyodor Gainullin \cite{Fyodor Gainullin} constructed a counterexample to this conjecture for the hyperbolic case. So we can use this conjecture to solve the equations under the assumption that $\widetilde{O}$ is not hyperbolic, i.e.$O$ is not $\pi$-hyperbolic.

\begin{defn}
A tangle is {\it $\pi$-hyperbolic} if its double branched cover admits a hyperbolic structure and the covering translation is an isometry.
\end{defn}

According to the lens space cabling conjecture, there are three cases except hyperbolic case:
\begin{enumerate}[(1)]
  \item when $\widetilde{O}$ is reducible, $\widetilde{O}$ is the complement of a knot contained in a ball in a non-trivial lens space $\widetilde{b_1}$.
  \item when $\widetilde{O}$ is a Seifert fiber space, $\widetilde{O}$ is the complement of a torus knot or a Klein bottle knot in $\widetilde{b_1}$.
  \item when $\widetilde{O}$ is irreducible toroidal but not Seifert fibered, $\widetilde{O}$ is the complement of a cabled knot in $\widetilde{b_1}$.
\end{enumerate}

In fact, (2)(3) make up the case that $\widetilde{O}$ is irreducible but not hyperbolic.

\subsection{Case \uppercase\expandafter{\romannumeral1}: $\widetilde{O}$ is reducible}

If $\widetilde{O}$ is reducible, then $\widetilde{O}$ contains essential 2-spheres. $\widetilde{O}$ is the double branched cover of a tangle $O=(B^3,t)$ with an associated involution $\sigma$, namely $\widetilde{O}/ \sigma=O$. The essential 2-spheres in $\widetilde{O}$ either do not intersect the fixed points of the involution $\sigma$, denoted by $fix(\sigma)$, or intersect $fix(\sigma)$ transversely in 2 points. The essential 2-spheres which do not intersect $fix(\sigma)$ are mapped to essential 2-spheres in the complement of $O$(i.e.$B^3-t$) by the covering map induced by $\sigma$. The essential 2-spheres in $O$ remain essential after adding the tangle $X_1$ and taking the numerator closure. In other words, $N(O+X_1)=b_1$ is split. Since $b_1$ is a 2-bridge link, $b_1$ is split if and only if $b_1=b(0,1)$ (i.e.the unlink). So there is at most one essential 2-sphere in $O$ which splits from $O$ the unknot. For any essential 2-sphere $S$ which meets $fix(\sigma)$ in 2 points, we can assume that it is invariant under $\sigma$, then $S/\sigma$ is a 2-sphere intersecting $t$ in 2 points and the 3-ball it bounds in $O$ meets $t$ in a knotted arc. There is at most one local knot in $O$ since $b_1$ is a 2-bridge link which is prime. In addition, it is impossible for $O$ to contain both locally knotted arc and a splittable unknot.

The locally knotted case has been discussed in Buck and Mauricio's paper \cite{Buck}. In \cite{Buck}, by excising the knotted arc in $O$,  the tangle equations \ref{eq1} and \ref{eq2} reduce to the following equations:

\begin{align}
N(O'+X_1)&=b(1,1) \quad (the \,\, unknot) \label{eq12}\\
N(O'+X_2)&=b_3,                           \label{eq13}
\end{align}

where $O'$ is the tangle we obtain after excising the knotted arc in $O$. In the tangle equations \ref{eq1} and \ref{eq2}, there exists $i=2$ or $3$ such that $b_i=b_1$, both of which contain the information of the knotted arc. Otherwise there does not exist any locally knotted solution.  Without loss of generality, we assume $i=2$. Once we get a solution $O'$, then we can recover the original $O$ by letting $O=O' \# b_1$.

When $O$ contains a splittable unknot, both $b_1$ and $b_2 \# b_3$ contain one. It implies one of $b_2$ and $b_3$ is $b(0,1)$, without loss of generality, $b_2=b(0,1)$. Also we can remove the splittable unknot, then the tangle equations \ref{eq1} and \ref{eq2} reduce to the system of equations above. Once we get a solution $O'$, then we can recover the original $O$ by putting the splittable unknot back.

Lifting to the double branched covers, we have
\begin{align*}
&\widetilde{O'}(\theta)=S^3\\
&\widetilde{O'}(\eta)=the\,lens\,space \,\, \widetilde{b_3},
\end{align*}

Therefore $\widetilde{O'}$ is the complement of a knot $K$ in $S^3$. The problem turns out to be finding the knot $K$ in $S^3$ which admits lens space surgeries.  This is still an open question. But for some knot, this question is well-understood. If $K$ is the unknot, then $\widetilde{O'}$ is a solid torus, which means $O'$ is a rational tangle, and this case can be solved by using Lemma \ref{rational tangle calculus}. If $K$ is a torus knot, then $\widetilde{O'}$ is a Seifert fiber space over a disk, which means $O'$ is a generalized Montesinos tangle, and this case is solved in \cite{DarcySumners} and \cite{Darcy}. If $K$ is neither the unknot nor a torus knot, then this case can be related to Berge knots which is a family of knots constructed by John Berge with lens space surgeries. Berge conjecture states that Berge knots contain all the knots in $S^3$ admitting lens space surgeries. Besides, Joshua Greene \cite{Greene} proved that the lens spaces obtained by doing surgeries along a knot in $S^3$ are precisely the lens spaces obtained by doing surgeries along Berge knots. It implies that only if $b_3$ satisfies that $\widetilde{b_3}$ is a lens space arising from surgeries along Berge knots, then the solution of the system of tangle equations \ref{eq12} and \ref{eq13} exists, and equivalently the solution of the system of tangle equations \ref{eq1} and \ref{eq2} exists when $\widetilde{O}$ is reducible. Meanwhile, Baker's papers\cite{Baker1}\cite{Baker2} give surgery descriptions of Berge knots on some chain links and also give tangle descriptions of Berge knots, which can help us to find some solutions in this case.

\subsection{Case \uppercase\expandafter{\romannumeral2}: $\widetilde{O}$ is irreducible but not hyperbolic}

Here, $O$ can not be a rational tangle, for otherwise $N(O+X_2)$ is a 2-bridge link by Lemma \ref{rational tangle calculus}, which is not a composite link. Then $\widetilde{O}$ is not a solid torus, thus $\widetilde{O}$ is irreducible and $\partial$-irreducible. If $\widetilde{O}$ is non-hyperbolic, then this case consists of two cases: (1)$\widetilde{O}$ is irreducible toroidal but not Seifert fibered and (2)$\widetilde{O}$ is a Seifert fiber space.

Before our analysis, we will give some useful results of Gordon. Here are some notation we will use. Let $J$ be a knot in int $S^1 \times D^2$ with winding number $w \geq 0$. Let $\alpha=[S^1 \times *] \in H_1(S^1 \times \partial D^2)$, $* \in \partial D^2$, $\beta=[* \times \partial D^2] \in H_1(S^1 \times \partial D^2)$, $* \in S^1$. Let $Y=S^1 \times D^2 \setminus N(J)$, then $H_1(Y)=\mathbb{Z}\alpha \bigoplus \mathbb{Z} \mu$ where $\mu$ is the class of a meridian of $N(J)$. There is a homeomorphism $h: S^1 \times D^2 \rightarrow N(J)$ such that $[h(S^1 \times *)]=w\alpha \in H_1(Y)$, $* \in \partial D^2$. Then $\lambda=[h(S^1 \times *)]$ and $\mu$ is a longitude-meridian basis for $J$. We use $J(r)$ to denote performing $r=m/n$ Dehn surgery on $S^1 \times D^2$ along $J$.

\begin{lem}{\rm \cite{Gordon}}
\label{Gordon1}
Let $J$ and $r$ be defined as above. Then the kernel of $H_1(\partial J(r)) \rightarrow H_1(J(r))$ is the cyclic group generated by
\begin{align*}
\displaystyle
\left\{\begin{matrix}
&\frac{nw^2}{gcd(w,m)}\alpha + \frac{m}{gcd(w,m)}\beta,  & if \,\, w \neq 0;  \\
&\beta                                             & if \,\, w=0.
\end{matrix}\right.
\end{align*}
\end{lem}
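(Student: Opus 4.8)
\textbf{Proof proposal for Lemma \ref{Gordon1}.}

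The plan is to compute the kernel of $H_1(\partial J(r)) \to H_1(J(r))$ directly from a presentation matrix for $H_1(J(r))$, using Mayer--Vietoris (or van Kampen followed by abelianization) on the decomposition $J(r) = Y \cup_{\partial N(J)} (S^1\times D^2)$, where the re-glued solid torus is attached along the surgery slope $m\mu + n\lambda$. First I would record the homology of the pieces: $H_1(Y) = \mathbb{Z}\alpha \oplus \mathbb{Z}\mu$, and the crucial relation is that the core longitude $\lambda = [h(S^1\times *)]$ satisfies $\lambda = w\alpha$ in $H_1(Y)$, which is exactly the definition of the winding number $w$ packaged into the framing homeomorphism $h$. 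Then in $J(r)$ the Dehn filling kills the class $m\mu + n\lambda = m\mu + nw\alpha$, so $H_1(J(r))$ has presentation $\langle \alpha, \mu \mid nw\alpha + m\mu = 0\rangle$. Separately, on the boundary torus $\partial J(r) = \partial(S^1\times D^2)$ lying on the outside, $H_1(\partial J(r))$ is generated by $\alpha$ and $\beta$, with $\mu$ expressed on this torus in terms of $\alpha,\beta$; here $\mu$ is the meridian of $N(J)$, which under the product structure on $S^1\times D^2\supset J$ is homologous to... — more precisely I want the pair $(\alpha,\beta)$ versus $(\lambda,\mu)$ on $\partial N(J)$, where $\mu$ pushed into $Y$ and then to the outer boundary picks up the winding number again.

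Next I would set up the map $H_1(\partial J(r)) \to H_1(J(r))$ on generators: $\alpha \mapsto \alpha$ and $\beta \mapsto$ (its image), and solve for the kernel as the set of integer combinations $x\alpha + y\beta$ mapping to $0$ in the cyclic-type group $\langle\alpha,\mu\mid nw\alpha+m\mu\rangle$. The linear-algebra core is: express $\beta$ in terms of $\alpha$ and $\mu$ inside $H_1(Y)$ (this is where the relation between the product framing of $S^1\times D^2$ and the knot framing enters, and where a sign/orientation convention must be pinned down so that $\widetilde m\cdot\widetilde l$-type intersection numbers come out right), then the kernel is generated by the primitive vector annihilated modulo the single relation $nw\alpha + m\mu=0$. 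Carrying this through, the annihilator is spanned by $\dfrac{nw^2}{\gcd(w,m)}\alpha + \dfrac{m}{\gcd(w,m)}\beta$ when $w\neq 0$: dividing by $\gcd(w,m)$ is forced because we need the primitive generator of the (infinite cyclic, when $m\neq 0$, or finite cyclic otherwise) kernel subgroup, and the two coefficients must be coprime. The case $w=0$ is degenerate: then $\lambda = 0$ in $H_1(Y)$, $J$ is homologically trivial in the solid torus, the filling relation becomes $m\mu=0$, and $\beta$ already dies in $H_1(J(r))$ (it bounds), so the kernel is just $\langle\beta\rangle$; I would treat this as a separate short paragraph.

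The main obstacle I expect is not the homological bookkeeping itself but getting the framing conventions exactly consistent: one must be careful that the homeomorphism $h:S^1\times D^2\to N(J)$ sending $S^1\times *$ to a curve homologous to $w\alpha$ in $Y$ is the \emph{correct} longitude (the one that, together with $\mu$, gives slopes $m/n$ their stated meaning), and that the expression of the meridian $\beta$ of the \emph{outer} solid torus, pushed into $Y$, is $\beta$ itself in $H_1(Y)$ (it bounds the meridian disk, so it is $0$ in $H_1(S^1\times D^2)$ but \emph{not} necessarily $0$ in $H_1(Y)$ — in fact $[\beta]$ generates a $\mathbb{Z}$ summand dual to $\alpha$). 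Reconciling "$\beta$ bounds in the solid torus" with "$\beta$ survives in $Y$" requires the Mayer--Vietoris connecting homomorphism and is the step where an erroneous sign would silently corrupt the final coefficients; once that is nailed down, the rest is the $\gcd$ extraction described above. I would therefore either cite \cite{Gordon} for the precise conventions or devote one careful paragraph to fixing them before doing any computation.
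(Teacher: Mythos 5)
The paper itself offers no proof of this lemma---it is quoted directly from Gordon's \emph{Dehn surgery and satellite knots}---so there is no in-paper argument to compare against; the question is only whether your computation is sound. Your overall strategy (present $H_1(J(r))$ as $H_1(Y)$ modulo the filling relation $m\mu+n\lambda=m\mu+nw\alpha$, then compute the kernel of the boundary map by linear algebra over $\mathbb{Z}$) is exactly the right one and is how the result is proved. But the single identity the whole computation hinges on is never stated, and in the one place you gesture at it you assert something false. In $H_1(Y)=\mathbb{Z}\alpha\oplus\mathbb{Z}\mu$ the outer meridian satisfies $\beta=w\mu$: the disk $*\times D^2$ meets $J$ algebraically $w$ times, so puncturing it along $N(J)$ exhibits $\beta-w\mu$ as a boundary in $Y$. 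In particular $[\beta]$ does \emph{not} ``generate a $\mathbb{Z}$ summand dual to $\alpha$'' unless $w=\pm1$, and it is $0$ in $H_1(Y)$ when $w=0$. This relation is precisely where the second factor of $w$ in $nw^2$ comes from (the first comes from $\lambda=w\alpha$ entering the killed class $nw\alpha+m\mu$); without it the stated formula is unreachable. With it, the kernel is $\{x\alpha+y\beta:\ x\alpha+yw\mu\in\langle nw\alpha+m\mu\rangle\}$, i.e.\ $x=knw$ and $yw=km$ for some $k\in\mathbb{Z}$, and the minimal positive admissible $k$ is $w/\gcd(w,m)$, which gives the generator $\tfrac{nw^2}{\gcd(w,m)}\alpha+\tfrac{m}{\gcd(w,m)}\beta$.

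Your justification for the division by $\gcd(w,m)$ is also incorrect as stated: you say it is forced because ``we need the primitive generator'' and ``the two coefficients must be coprime,'' but the kernel generator need not be primitive in $H_1(\partial J(r))$. For instance $w=2$, $m=4$, $n=1$ gives the generator $2\alpha+2\beta$ (one checks $\alpha+2\mu$ is not an integer multiple of $2\alpha+4\mu$, so $\alpha+\beta$ is not in the kernel). The correct reason for the $\gcd$ is the integrality constraint $w\mid km$ above, not primitivity; followed literally, your criterion would over-divide in such cases. Fixing the identity $\beta=w\mu$ and replacing the primitivity argument by the integrality argument turns your outline into a complete proof.
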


\begin{lem}{\rm \cite{Gordon}}
\label{Gordon2}
Let $J$ and $r$ be defined as above, and let $J$ be a $(p,q)$-torus knot with $p \geq 2$.
\[J(r) \cong
\left\{\begin{matrix}
&S^1 \times D^2 \# L(p,q)  &if& \, &r=pq\,& \\
&S^1 \times D^2            &if& \, &r=m/n& \,\, and \quad m=npq \pm 1
\end{matrix}\right.\]
and otherwise is a Seifert fiber space with incompressible boundary.
\end{lem}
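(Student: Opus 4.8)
The plan is to realize $J$ on a torus $T$ parallel to $\partial(S^1\times D^2)$ and to identify the exterior $Y=S^1\times D^2\setminus N(J)$ as a Seifert fibered ``cable space'', after which the three conclusions follow from standard facts about Dehn filling of Seifert fibered spaces. First I would take $T$ to be concentric with $\partial(S^1\times D^2)$, so that it splits $S^1\times D^2$ into an inner solid torus $V_1$ (with core $c_1$) and a product region $W\cong T^2\times I$ with $T=V_1\cap W$; since $J$ is a torus knot it may be isotoped onto such a $T$, where it is the $(p,q)$-cable of $c_1$, running $p\geq 2$ times in the longitudinal direction of $V_1$ (so that the winding number of $J$ in $S^1\times D^2$ is $w=p$, because $c_1$ is isotopic to the core of $S^1\times D^2$) and $q$ times meridionally. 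Fibering $T$ by parallel copies of $J$, this Seifert fibration extends over $V_1$ with the single exceptional fiber $c_1$ of order $p$, and over $W$ as an honest circle bundle over an annulus; requiring the total space to be $S^1\times D^2$ with $J$ of winding number $p$ pins down the Seifert invariant $(p,\beta_1)$ of $c_1$, and this is what will determine the second parameter of the lens space below. Because $J$ is itself a regular fiber, $N(J)$ is a fibered solid torus, so $Y$ is Seifert fibered over the annulus with the one exceptional fiber $c_1$ of order $p$, its two boundary tori $\partial(S^1\times D^2)$ and $\partial N(J)$ being unions of regular fibers.

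The next step is to locate the regular fiber slope on $\partial N(J)$ in the longitude--meridian basis $(\lambda,\mu)$ of the statement, where $[\lambda]=w\alpha=p\alpha$. Since the fibration near the regular fiber $J$ is the standard product fibration, this slope is the push-off of $J$ along $T$; expressing that push-off in the $(\lambda,\mu)$ basis introduces the usual self-linking correction of a cable, which for the $(p,q)$-cable of the $0$-framed core of $S^1\times D^2$ equals $pq$, so the regular fiber slope on $\partial N(J)$ is $pq$. Consequently $J(r)$ is the result of Dehn filling $Y$ along $\partial N(J)$ with slope $r=m/n$, and the geometric intersection number of $r$ with the fiber is $\Delta(r,pq)=|m-npq|$. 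I would now split into the three cases of the standard Dehn-filling dichotomy for Seifert fibered spaces. If $\Delta(r,pq)=0$, i.e.\ $r=pq$, one fills along the fiber slope: the fiber bounds a meridian disk of the filling solid torus, producing an essential $2$-sphere, and, splitting off this sphere, the filled cable space becomes a solid torus which re-glues to $W$ exactly as $V_1$ did; hence $J(pq)\cong(S^1\times D^2)\#L(p,q)$, with the summand being precisely $L(p,q)$ by the Seifert invariant $(p,\beta_1)$ fixed above. If $\Delta(r,pq)=1$, i.e.\ $m=npq\pm1$, the core of the filling solid torus is isotopic to a regular fiber, no new exceptional fiber is created, and $J(r)$ is Seifert fibered over $D^2$ with the single exceptional fiber of order $p$, so $J(r)\cong S^1\times D^2$. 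If $\Delta(r,pq)\geq2$, the core of the filling solid torus becomes a second exceptional fiber, of order $\Delta(r,pq)\geq2$, so $J(r)$ is Seifert fibered over $D^2$ with two exceptional fibers of orders $p\geq2$ and $\Delta(r,pq)\geq2$; a Seifert fibered space of this form is not a solid torus and has incompressible boundary. This gives exactly the three alternatives in the statement.

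The qualitative trichotomy (reducible, solid torus, incompressible-boundary Seifert fibered space) is the soft part of the proof: once $Y$ is identified as the cable space, it is immediate from standard Seifert-fibered Dehn-filling facts. The step requiring genuine care --- which I expect to be the main obstacle --- is the framing and slope bookkeeping in the explicit model $V_1\cup_T W$: verifying that the regular fiber slope on $\partial N(J)$ is exactly $pq$ in the basis $(\lambda,\mu)$ of the statement, and that the reducible filling yields the lens space $L(p,q)$ with precisely that second parameter and not $L(p,q')$ for some other $q'$. Both amount to linking-number and first-homology computations that must track the Seifert invariant of $c_1$ through the gluing of $V_1$ to $W$ and through the Dehn filling, and it is at these computations that the specific conventions ``$p\geq2$'' and ``$L(p,q)$'' in the statement get fixed; sign and convention errors are easy here, so I would carry out this bookkeeping explicitly and with care.
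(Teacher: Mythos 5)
The paper does not prove this lemma at all---it is quoted verbatim from Gordon's \emph{Dehn surgery and satellite knots}---so there is no in-paper argument to compare against; your proposal is correct and is essentially Gordon's original proof (identify $Y$ as the cable space, Seifert fibered over the annulus with one exceptional fiber of order $p$, locate the fiber slope $pq$ on $\partial N(J)$, and apply the Dehn-filling trichotomy $\Delta(r,pq)=0,1,\geq 2$). The one step you leave as a promissory note, the homological bookkeeping pinning the reducible filling's summand to $L(p,q)$ rather than some $L(p,q')$, is indeed the only computation requiring care, and you have correctly isolated it.
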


The following are some useful results about lens space and Seifert fiber space. Let $T_i$ be a solid torus with a meridian $M_i$ and a longitude $L_i$, for $i=1,2$.

\begin{defn}
The lens space $L(a,b)=T_1 \cup_h T_2$ where $h:\partial T_2 \rightarrow \partial T_1$ is an orientation-preserving homeomorphism and $h(M_2)=aL_1+bM_1$ with $a, b \in \mathbb{Z}$ and $gcd(a,b)=1$.
\end{defn}

\begin{lem}{\rm \cite{JN}}
\label{Lens space SFS1}
\mbox{}\\
(1) If $L(a,b)$ is a generalized Seifert fiber space with orientable orbit surface, then there is a fiber preserving homeomorphism such that:
\[L(a,b) \cong M(0,0;(\alpha_1,\beta_1), (\alpha_2,\beta_2)) \cong T_1 \cup T_2\]
where
$a=det
\begin{pmatrix}
\alpha_1 &\alpha_2 \\
-\beta_1&\beta_2
\end{pmatrix}$,
$b=-det
\begin{pmatrix}
\alpha_1 &\alpha'_2 \\
-\beta_1&\beta'_2
\end{pmatrix}$ and
$det
\begin{pmatrix}
\alpha_2 &\alpha'_2 \\
\beta_2&\beta'_2
\end{pmatrix}=1$\\
(2) If $L(a,b)$ is a generalized Seifert fiber space with non-orientable orbit surface, then there is a fiber preserving homeomorphism such that
\[L(a,b) \cong M(-1,0; (\alpha,\pm 1)) \cong T_1 \cup_g (S^1 \widetilde{\times} M\ddot{o}bius \,\, band) \]
In this case $L(a,b) \cong L(4\alpha, \pm 1 -2\alpha)$.
\end{lem}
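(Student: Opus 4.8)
This is the classical description of the Seifert fibrings of a lens space (see Jankins--Neumann \cite{JN}); the plan is to reconstruct it by the standard ``decompose the base and track the gluing'' argument. In both parts the substantive point is an analysis of how two fibered pieces are glued along a torus, and then reading off the lens-space parameters in a meridian--longitude basis; everything else is linear algebra with Seifert invariants.

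For part (1): a closed generalized Seifert fibring with orientable orbit surface of genus $0$ is a fibring over $S^2$, and since a lens space is irreducible with cyclic $\pi_1$, after normalizing the invariants it has at most two exceptional fibers (a third genuine exceptional fiber forces $\pi_1$ non-cyclic, and a degenerate $(0,1)$ fiber together with two genuine ones forces reducibility, as one sees by recognizing the complement of such a fiber as a torus-knot-type Seifert piece). So we may assume the fibring is $M(0,0;(\alpha_1,\beta_1),(\alpha_2,\beta_2))$. Split $S^2$ into disks $D_1,D_2$ along a circle $C$, with the $i$-th exceptional point interior to $D_i$; then the preimage $V_i$ of $D_i$ is a (possibly degenerate) fibered solid torus, the preimage $T$ of $C$ is a torus, and $L(a,b)=V_1\cup_T V_2$ is a genus-one Heegaard splitting. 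On $T=\partial V_i$ I would take the basis given by a regular fiber $h$ and a section curve $s_i$ of the restricted fibring, oriented so $s_i\cdot h=1$; the meridian of $V_i$ then reads $M_i=\alpha_i s_i+\beta_i h$, which is the defining relation of the fibered solid torus with invariant $(\alpha_i,\beta_i)$ (the case $\alpha_i=0$ being exactly the $(0,1)$ fiber, where $M_i$ is itself a regular fiber). Choosing the sections compatibly, the fiber-preserving gluing $T\to T$ identifies $h$ and the $s_i$ up to sign, so I can write the change of basis from $(s_2,h)$ to $(s_1,h)$ explicitly and then pass to a meridian--longitude basis $(M_1,L_1)$ of $V_1$. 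The only remaining freedom is the choice of longitude $L_1$, i.e.\ of a curve dual to $M_1$, and this is precisely the choice of $(\alpha_2',\beta_2')$ with $\det\begin{pmatrix}\alpha_2 & \alpha_2'\\ \beta_2 & \beta_2'\end{pmatrix}=1$. Comparing the computed image of $M_2$ with the defining relation $h(M_2)=aL_1+bM_1$ of the lens space then yields $a=\det\begin{pmatrix}\alpha_1 & \alpha_2\\ -\beta_1 & \beta_2\end{pmatrix}$ and $b=-\det\begin{pmatrix}\alpha_1 & \alpha_2'\\ -\beta_1 & \beta_2'\end{pmatrix}$.

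For part (2): a closed generalized fibring with non-orientable orbit surface of genus $-1$ is a fibring over $\mathbb{RP}^2$, and the cyclic-$\pi_1$ constraint again leaves at most one exceptional fiber, so the space is $M(-1,0;(\alpha,\pm1))$. Write $\mathbb{RP}^2=(\text{M\"obius band})\cup D$ with the exceptional point in $D$; the preimage of the M\"obius band is the (unique) orientable $S^1$-bundle $S^1\widetilde{\times}(\text{M\"obius band})$ and the preimage of $D$ is a fibered solid torus $T_1$, which already gives $L(a,b)\cong(S^1\widetilde{\times}\text{M\"obius band})\cup_g T_1$. To extract the parameters I would either use the second Seifert fibring of $S^1\widetilde{\times}(\text{M\"obius band})$ — the one over a disk with two exceptional fibers of order $2$, whose signs are forced by orientability to be $(2,1)$ and $(2,-1)$ — to rewrite $L(a,b)$ as $M(0,0;(2,1),(2,-1),(\alpha,\beta'))$ and apply the formulas of part (1), or else compute $H_1$ directly from the gluing using $H_1(S^1\widetilde{\times}\text{M\"obius band})\cong\mathbb{Z}\oplus\mathbb{Z}/2$ together with the requirement that the meridian of $T_1$ both kill the torsion and generate the quotient; either way the order comes out $|4\alpha|$, and tracking the secondary coordinate gives $L(a,b)\cong L(4\alpha,\pm1-2\alpha)$.

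The matrix and continued-fraction bookkeeping is routine. The main obstacle is keeping the orientation and basis conventions consistent so the formulas come out in exactly the stated normalization: concretely, (a) justifying that the longitude ambiguity in part (1) is captured precisely by the determinant-one condition on $(\alpha_2',\beta_2')$, and (b) in part (2) correctly identifying $S^1\widetilde{\times}(\text{M\"obius band})$ with the twisted $I$-bundle over the Klein bottle and pinning down its alternate fibring (including the signs of the two order-$2$ fibers) so that the factor $4$ and the offset $-2\alpha$ emerge.
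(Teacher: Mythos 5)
This lemma is imported from Jankins--Neumann \cite{JN}; the paper offers no proof of its own, and your reconstruction is the standard argument (genus-one Heegaard splitting from the two fibered solid tori over the two half-disks of $S^2$, meridian $\alpha_i s_i+\beta_i h$ in the section--fiber basis, change of basis to a meridian--longitude pair), so in substance you are reproducing the cited source. One concrete slip in part (2): when you pass to the second fibration of $S^1\widetilde{\times}(\mbox{M\"obius band})$, the fiber and section of the two fibrations are interchanged on the boundary torus, so the filling solid torus whose meridian is $\alpha s+(\pm1)h$ in the M\"obius-band fibration becomes an \emph{ordinary} fiber of type $(1,\pm\alpha)$ in the disk fibration, not an exceptional fiber of multiplicity $\alpha$. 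As written, $M(0,0;(2,1),(2,-1),(\alpha,\beta'))$ has $|H_1|=4|\beta'|$ and is a prism manifold for $\alpha>1$, so it cannot equal $L(4\alpha,\pm1-2\alpha)$; the correct rewriting is $M(0,0;(2,1),(2,-1),(1,\pm\alpha))$, which does give order $4\alpha$. Your alternative route via $H_1(S^1\widetilde{\times}\mbox{M\"obius band})\cong\mathbb{Z}\oplus\mathbb{Z}/2$ and the quotient by $(2\alpha,1)$ is correct as stated and already yields the factor $4\alpha$, so the slip is repairable within your own outline.
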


\begin{cor}{\rm \cite{DarcySumners}}
\label{Lens space SFS2}
If $L(a,b)=T_1 \cup T_2$ and $T_1$ is fibered by $H \cong pL_1+qM_1$, then $L(a,b) \cong M(0,0;(p,-e), (aq-bp,ad-be))$ where $pd-qe=1$. If $L(a,b)=T_1 \cup (S^1 \widetilde{\times} M\ddot{o}bius \,\, band)$ and $T_1$ is fibered by $H \cong pL_1+qM_1$, then $L(a,b) \cong M(-1,0;(p, \pm 1))$ and $q \cong \pm 1 \,mod \, p$.
\end{cor}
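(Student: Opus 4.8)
The plan is to deduce both statements of Corollary \ref{Lens space SFS2} directly from Lemma \ref{Lens space SFS1}, by extending the given Seifert fibering of the Heegaard solid torus $T_1$ across the whole lens space and reading off the Seifert invariants of the two complementary pieces from the gluing data. The only inputs needed are the change-of-basis algebra on $\partial T_1$ and the fact (implicit in "$T_1$ is fibered by $H$") that the chosen fibering of $T_1$ extends to a generalized Seifert fibering of $L(a,b)$, with $T_1$ as one fibered solid torus.

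For the first statement I would argue as follows. The fibering of $T_1$ has regular fiber $H \cong pL_1 + qM_1$ on $\partial T_1$, so its core is an exceptional fiber of order $p$. To name its Seifert pair, choose a cross-section $s_1 = eL_1 + dM_1$ of the induced $S^1$-bundle on $\partial T_1$; this is possible exactly when $s_1$ meets $H$ once, i.e. when $pd - qe = \pm 1$, and fixing orientations gives the relation $pd - qe = 1$ in the statement. Inverting the change of basis $\{L_1,M_1\}\leftrightarrow\{H,s_1\}$ yields $M_1 = p\,s_1 - e\,H$, so the exceptional fiber from $T_1$ is $(p,-e)$. To fiber the rest of $L(a,b)$, transport the bundle structure on $\partial T_1$ through $h$: the fiber on $\partial T_2$ is $h^{-1}(H)$ and a cross-section is $-h^{-1}(s_1)$, the sign recording that the coboundary orientation reverses across the splitting torus. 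Thus the Seifert pair of the second piece is obtained by rewriting $h(M_2) = aL_1 + bM_1$ in the basis $\{H,s_1\}$; the same matrix inversion gives $h(M_2) = (ad-be)H + (bp-aq)s_1$, so after the sign change the meridian of $T_2$ is $(ad-be)H - (bp-aq)\sigma_2$ and the second exceptional fiber is $(aq-bp,\,ad-be)$. Hence $L(a,b) \cong M\bigl(0,0;(p,-e),(aq-bp,ad-be)\bigr)$, and substituting these invariants into the formula of Lemma \ref{Lens space SFS1}(1) and using $pd-qe = 1$ recovers $a$, which both confirms the identification and pins down the signs.

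For the second statement I would run the same computation with $T_2$ replaced by $S^1 \widetilde{\times} M\ddot{o}bius \,\, band$. Since this manifold is Seifert fibered over the M\"obius band with no exceptional fiber, the only exceptional fiber of the extended fibering is the one from $T_1$, namely $(p,-e)$ with $pd-qe = 1$, so $L(a,b) \cong M(-1,0;(p,-e))$. Because $L(a,b)$ is a genuine lens space, Lemma \ref{Lens space SFS1}(2) forces $-e = \pm 1$; substituting this into $pd - qe = 1$ and reducing modulo $p$ gives $q \equiv \pm 1 \pmod{p}$, the remaining assertion.

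The calculations are routine once conventions are fixed, so the genuine obstacles are the bookkeeping ones: (i) verifying that the chosen fibering of $T_1$ really extends, i.e. that $H$ is not the meridian of the complementary piece, so that we remain within generalized Seifert fiber spaces; and (ii) controlling all orientations so that the determinant relation comes out as $pd - qe = +1$ and the second exceptional fiber as $(aq-bp,\,ad-be)$ rather than with a spurious sign. For the second statement one additionally uses that $S^1 \widetilde{\times} M\ddot{o}bius \,\, band$ has an essentially unique Seifert fibering, so that compatibility of the fiberings across the gluing torus is automatic once the fiber slopes are matched; it is precisely this rigidity that produces the congruence $q \equiv \pm 1 \pmod{p}$.
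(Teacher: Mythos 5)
Your derivation is correct: the paper gives no proof of this corollary (it is quoted from \cite{DarcySumners}), and your argument --- extending the fibering of $T_1$ across $L(a,b)$, inverting the change of basis $\{L_1,M_1\}\leftrightarrow\{H,s_1\}$ with $pd-qe=1$ to read off $(p,-e)$ and $(aq-bp,ad-be)$, and invoking Lemma \ref{Lens space SFS1}(2) to force $-e\equiv\pm1 \pmod p$ in the M\"obius-band case --- is exactly the standard computation underlying the cited result. The consistency check against the determinant formula of Lemma \ref{Lens space SFS1}(1), which returns $a(pd-qe)=a$, confirms the signs.
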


\begin{lem}{\rm \cite{DarcySumners}}
\label{Lens space SFS3}
If $\mathcal{A}=Y(s/t)$ where $Y=L(a,b) \setminus N(T_{p,q})$, $L(a,b)=T_1 \cup T_2$, and $T_{p,q}$ is a $(p,q)$-torus knot in $T_1$, then $\mathcal{A}=M(0,0;(p,-e), (aq-bp, ad-be), (s-tpq,t))$, where $pd-qe=1$.
\end{lem}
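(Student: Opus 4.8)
The plan is to build an explicit (generalized) Seifert fibration on $\mathcal{A}=Y(s/t)$ and read off its Seifert invariants.

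First I would put $T_{p,q}$ in standard position: isotope it onto a torus $T'$ concentric with $\partial T_1$ inside $T_1$, so that $T_{p,q}$ becomes a regular fiber of the Seifert fibration of the solid torus $T_1$ whose regular fibers are the $(p,q)$-curves on the concentric tori and whose unique exceptional fiber is the core $C_1$, of multiplicity $p$; on $\partial T_1$ this fibration has fiber slope $pL_1+qM_1$. By Corollary \ref{Lens space SFS2} (applied with $H\cong pL_1+qM_1$), gluing $T_2$ back along $\partial T_1$ extends the fibration to a Seifert fibration of $L(a,b)$ with $L(a,b)\cong M(0,0;(p,-e),(aq-bp,ad-be))$ and $pd-qe=1$, in which $(p,-e)$ is the fiber $C_1$, $(aq-bp,ad-be)$ is the core $C_2$ of $T_2$, and $T_{p,q}$ is a regular fiber. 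Deleting a fibered tubular neighborhood $N(T_{p,q})$ of this regular fiber removes an open disk around a regular point of the orbit surface, so $Y=(T_1\setminus N(T_{p,q}))\cup_{\partial T_1}T_2$ is Seifert fibered over a disk with exactly the two exceptional fibers $(p,-e)$ and $(aq-bp,ad-be)$ and one boundary torus $\partial N(T_{p,q})$, on which the fiber slope equals the regular fiber slope of this fibration. (When $aq-bp\in\{0,\pm1\}$ the middle ``exceptional'' fiber is really a generalized or a regular fiber, so this should be read in the category of generalized Seifert fiber spaces used throughout the paper.)

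The crux is to identify that regular fiber slope on $\partial N(T_{p,q})$ in Gordon's meridian-longitude basis $(\mu,\lambda)$ (the basis used in Lemmas \ref{Gordon1} and \ref{Gordon2}, for which $[\lambda]=w\alpha$, with winding number $w=p$ since $T_{p,q}\simeq pL_1+qM_1$ meets the meridian disk of $T_1$ in $p$ points): I claim it is $pq\mu+\lambda$, the slope ``$pq$''. One route is a homology computation inside $W:=T_1\setminus N(T_{p,q})$, where $H_1(W)=\mathbb{Z}\alpha\oplus\mathbb{Z}\mu$: all regular fibers of $W$ are homologous, the one on $\partial T_1$ represents $p[L_1]+q[M_1]=p\alpha+pq\mu$ (here $[M_1]=p\mu$ because the meridian disk of $T_1$ is punctured $p$ times by $T_{p,q}$, while $[L_1]=\alpha$), and $[\lambda]=p\alpha$; comparing forces the fiber slope on $\partial N(T_{p,q})$ to be $pq\mu+\lambda$. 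A computation-free route uses that $W$ is Seifert fibered over a surface with non-empty boundary, so a Dehn filling of $\partial N(T_{p,q})$ yields a reducible manifold only along the fiber slope; since Lemma \ref{Gordon2} gives $W(pq)=T_{p,q}(pq)\cong S^1\times D^2\,\#\,L(p,q)$, which is reducible, the fiber slope must be ``$pq$'' $=pq\mu+\lambda$.

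Finally I would carry out the Dehn filling defining $\mathcal{A}=Y(s/t)$, filling $\partial N(T_{p,q})$ along the slope $s\mu+t\lambda$. Using $\mu$ as a section curve (so $\Delta(\mu,pq\mu+\lambda)=1$), the filling slope reads $(s-tpq)\mu+t(pq\mu+\lambda)$ in the (section, fiber) basis, so the core of the filling solid torus is a (generalized) exceptional fiber with Seifert invariant $(s-tpq,t)$, and capping the boundary circle of the base disk turns the orbit surface into $S^2$. Hence $\mathcal{A}=M(0,0;(p,-e),(aq-bp,ad-be),(s-tpq,t))$ with $pd-qe=1$, as claimed. The step I expect to be the main obstacle is the identification of the fiber slope $pq\mu+\lambda$ on $\partial N(T_{p,q})$ with the correct sign conventions, since it must be phrased in exactly Gordon's framing in order to be compatible with Lemmas \ref{Gordon1}--\ref{Gordon2} and with the statement being proved.
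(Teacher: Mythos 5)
The paper does not prove this lemma; it is quoted without proof from Darcy--Sumners \cite{DarcySumners}, so there is no internal argument to compare against. Your proof is correct and is essentially the standard argument (and the one in the cited source): view $T_{p,q}$ as a regular fiber of the fibration of $L(a,b)$ given by Corollary \ref{Lens space SFS2}, identify the fiber slope on $\partial N(T_{p,q})$ as $pq\mu+\lambda$ (your homological check $[\lambda+x\mu]=p\alpha+x\mu=p\alpha+pq\mu$, or equivalently the reducibility of the fiber-slope filling via Lemma \ref{Gordon2}, both settle this), and then read off the new Seifert invariant $(s-tpq,t)$ from the change of basis to (section, fiber) coordinates.
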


\begin{lem}{\rm \cite{DarcySumners}}
\label{Lens space SFS4}
If $\mathcal{A}=Y(s/t)$ where $Y=L(a,b) \setminus N(T_{p,q})$, $L(a,b)=T_1 \cup_g (S^1 \widetilde{\times} M\ddot{o}bius \,\, band)$, and $T_{p,q}$ is a $(p,q)$-torus knot in $T_1$, then $\mathcal{A}=M(-1,0;(p,\pm 1), (s-tpq,t))$.
\end{lem}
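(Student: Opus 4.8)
The plan is to prove Lemma~\ref{Lens space SFS4} by exactly the argument used for Lemma~\ref{Lens space SFS3}, with the solid torus $T_2$ there replaced by the twisted circle bundle $S^1 \widetilde{\times} M\ddot{o}bius \,\, band$ and with Lemma~\ref{Lens space SFS1}(1) replaced by Lemma~\ref{Lens space SFS1}(2). First I would fix a Seifert fibration of $L(a,b)$ adapted to the given decomposition. Since $T_{p,q}$ lies in $T_1$ on a torus concentric with $\partial T_1$, there is a Seifert fibration of the solid torus $T_1$ in which every such concentric $(p,q)$--curve --- in particular $T_{p,q}$ --- is a regular fiber, and the core is an exceptional fiber of order $p$. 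By the second part of Corollary~\ref{Lens space SFS2}, because $L(a,b)=T_1\cup_g(S^1 \widetilde{\times} M\ddot{o}bius \,\, band)$ with $T_1$ fibered by a slope $pL_1+qM_1$, this fibration of $T_1$ extends across the circle bundle, giving $L(a,b)\cong M(-1,0;(p,\pm1))$ with $q\equiv\pm1\pmod p$; after the usual normalization over a non-orientable base the Seifert invariant of the core of $T_1$ is $(p,\pm1)$.

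Next I would read off the Seifert structure of $Y=L(a,b)\setminus N(T_{p,q})$. Deleting a fibered tubular neighborhood of the regular fiber $T_{p,q}$ from the fibered solid torus $T_1$ yields the cable space $C(p,q)=T_1\setminus N(T_{p,q})$, which is Seifert fibered over an annulus with a single exceptional fiber of order $p$ and whose regular fiber meets $\partial N(T_{p,q})$ in the slope $pq$ (the framing a torus knot inherits from the torus carrying it, in the standard meridian--longitude basis of $N(T_{p,q})$). Gluing $C(p,q)$ to $S^1 \widetilde{\times} M\ddot{o}bius \,\, band$ along $\partial T_1$ by the fiber--preserving map $g$ identifies their base surfaces --- an annulus and a M\"obius band --- along a boundary circle, and the result is again a M\"obius band. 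Hence $Y\cong M(-1,1;(p,\pm1))$: Seifert fibered over a M\"obius band with one exceptional fiber $(p,\pm1)$ and one torus boundary $\partial N(T_{p,q})$, on which the regular fiber has slope $pq$.

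It then remains to perform the $s/t$ Dehn filling. Filling $\partial N(T_{p,q})$ along the slope $s/t$ (in the same basis) glues in a solid torus whose core becomes an exceptional fiber of multiplicity equal to the intersection number of $s/t$ with the fiber slope $pq$, namely $|s-tpq|$, and a bookkeeping of the Seifert invariant makes it $(s-tpq,t)$, just as in Lemma~\ref{Lens space SFS3}. Capping the boundary circle of the M\"obius band base converts the orbit surface to $\mathbb{RP}^2$, so $\mathcal{A}=Y(s/t)\cong M(-1,0;(p,\pm1),(s-tpq,t))$; this is to be read in the generalized Seifert fiber space sense when $s-tpq=0$ (the filling slope equal to the fiber slope), and the degenerate subcase $p=1$, where the first pair is not a true exceptional fiber, is handled identically.

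The one place where the hypotheses do real work --- as opposed to routine Seifert--fibration bookkeeping --- is showing that the cable--space fibration of $C(p,q)$ and the circle--bundle fibration of $S^1 \widetilde{\times} M\ddot{o}bius \,\, band$ actually agree along $\partial T_1$, so that $Y$ is Seifert fibered at all, and that the exceptional coefficient at the core of $T_1$ can be taken to be $\pm1$. This is exactly the content of Lemma~\ref{Lens space SFS1}(2) and Corollary~\ref{Lens space SFS2}: the hypothesis $L(a,b)=T_1\cup_g(S^1 \widetilde{\times} M\ddot{o}bius \,\, band)$ with $T_1$ fibered by $pL_1+qM_1$ already forces $q\equiv\pm1\pmod p$ and $L(a,b)\cong L(4p,\pm1-2p)$, which is precisely the compatibility required.
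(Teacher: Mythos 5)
This lemma is quoted from Darcy--Sumners and the paper supplies no proof of its own, so there is nothing internal to compare against; your proposal has to be judged on its own terms, and it is essentially correct. It is the standard argument: fiber $T_1$ so that $T_{p,q}$ is a regular fiber and the core an exceptional fiber of order $p$, observe via Corollary~\ref{Lens space SFS2} that this fibration extends over the twisted circle bundle to give $L(a,b)\cong M(-1,0;(p,\pm1))$, deduce $Y\cong M(-1,1;(p,\pm1))$ with fiber slope $pq$ on $\partial N(T_{p,q})$, and fill to get $M(-1,0;(p,\pm1),(s-tpq,t))$. This runs exactly parallel to the proof of Lemma~\ref{Lens space SFS3}, which is how Darcy--Sumners themselves proceed, and you correctly isolate the one non-routine point: the compatibility of the two fibrations along $\partial T_1$, equivalently $q\equiv\pm1\pmod p$, which is not written into the hypotheses of the lemma but is forced by the decomposition $L(a,b)=T_1\cup_g(S^1\widetilde{\times}M\ddot{o}bius\ band)$ together with Corollary~\ref{Lens space SFS2}. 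The only place you lean on convention rather than computation is the exact normalization $(s-tpq,t)$ of the new Seifert invariant (as opposed to, say, $(s-tpq,-t)$); deferring that to consistency with Lemma~\ref{Lens space SFS3} is acceptable, but a fully self-contained proof would pin down the choice of cross-section on $\partial N(T_{p,q})$ that produces the second coordinate $t$.
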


%\begin{thm}{\rm \cite{JN}}
%\label{elementary surgery on SFS}
%\[M(g,0; (0,1), (\alpha_1,\beta_1), \dots, (\alpha_n,\beta_n))=
%\left\{\begin{matrix}
%\{\#^{2g}_{i=1}(S^1 \times S^2)\} \#\{ \# ^n_{i=1} L(\alpha_i,-\beta_i)\}  & if \,\, g \geq 0 \\
%\{\#^{|g|}_{i=1}(S^1 \times S^2)\} \#\{ \# ^n_{i=1} L(\alpha_i,-\beta_i)\}  & if \,\, g < 0
%\end{matrix}\right.\]
%\end{thm}

\subsubsection{$\widetilde{O}$ is irreducible toroidal but not Seifert fibered}

According to Baker's lens space cabling conjecture, $\widetilde{O}$ is the complement of a cabled knot in the lens space $\widetilde{b_1}$ if $\widetilde{O}$ is irreducible toroidal but not Seifert fibered, and to obtain a non-prime 3-manifold, the surgery is along the boundary slope of an essential annulus in $\widetilde{O}$.

Assume that $b_1$ is the 2-bridge link $b(a,b)$ for a pair of relatively prime integers $(a,b)$ satisfying $0<\frac{b}{a}\leq 1 (or \,\, \frac{b}{a}=\frac{1}{0}=\infty)$, then $\widetilde{b_1}=L(a,b)$. Let $K$ be the cabled knot lying in the lens space $L(a,b)$. Then there exists a knot $K'$ in $L(a,b)$ and a homeomorphism $f:S^1 \times D^2 \rightarrow N(K')$, such that $K=f(J)$ where $J=T_{p,q}$ is a $(p,q)$-torus knot in $S^1 \times D^2$ for some $p>1$, $q \in \mathbb{Z}$. $\widetilde{O}=L(a,b)\setminus N(K)$. We choose $\alpha'=\left(  f|_{S^1 \times \partial D^2}  \right)_* (\alpha)$ and $\beta'=\left( f|_{S^1 \times \partial D^2}  \right)_* (\beta)$ as a longitude-meridian basis for $\partial N(K')$. $\lambda'=\left( f|_{\partial(N(J))} \right)_*(\lambda)$, $\mu'=\left(  f|_{\partial(N(J))}  \right)_*(\mu)$ is a longitude-meridian basis for $K$. Actually, $T=f(S^1 \times \partial D^2)$ is an incompressible torus in $\widetilde{O}$. If $T$ is compressible, then $T$ either bounds a solid torus or lies in a ball in $\widetilde{O}$ since $\widetilde{O}$ is irreducible. The latter case can be excluded easily. If $T$ bounds a solid torus $V$ in $\widetilde{O}$, then $\widetilde{O}=V \cup (S^1 \times D^2 \setminus  N(T_{p,q}))$ is atoroidal, so contradicts our assumption. Splitting $\widetilde{O}$ along the incompressible torus $T$, we obtain two pieces, a cable space $C_{p,q}=S^1 \times D^2 \setminus  N(T_{p,q})$ and another piece $L(a,b) \setminus N(K')$ denoted by $M$. Now we prove that $M$ is a Seifert fiber space.

\begin{prop}
\label{Prop1}
$M$ is a Seifert fiber space over a disk with exact 2 exceptional fibers.
\end{prop}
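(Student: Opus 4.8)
The plan is to show that $M$ is Seifert fibered by exhibiting two lens--space Dehn fillings of $M$ along slopes that lie far apart on $\partial M$, applying the Cyclic Surgery Theorem, and then reading off the base orbifold and the number of exceptional fibres from the existence of a lens--space filling. First I record the basic structure of $M$. Since $\widetilde O$ is irreducible, so are both $C_{p,q}$ and $M$; since the torus $T$ is incompressible in $\widetilde O$, it is incompressible in $M$, and in particular $M$ is not a solid torus; and $\partial M=T$ consists of a single torus, because $\partial\widetilde O=f(\partial N(J))=\partial N(K)$ lies entirely in $C_{p,q}$.

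Next I produce the two fillings. Filling $C_{p,q}$ along the meridian $\mu'=(f|_{\partial N(J)})_*(\mu)$ of $K$ merely undoes the drilling, so $C_{p,q}(\mu')=f(S^1\times D^2)=N(K')$, a solid torus whose meridian on $T$ is $\beta'$; hence $M(\beta')=N(K')\cup_T M=L(a,b)$. For the second filling, recall (the Lens Space Cabling Conjecture in the cabled case, \cite{Baker}) that the reducing slope $\eta$, for which $\widetilde O(\eta)$ is non-prime, is the boundary slope of the cabling annulus, i.e.\ the cabling slope on $\partial N(K)$; this corresponds to the surgery coefficient $r=pq$ on $T_{p,q}\subset S^1\times D^2$, so by Lemma \ref{Gordon2} we get $C_{p,q}(\eta)\cong S^1\times D^2\,\#\,L(p,q)$ and therefore $\widetilde O(\eta)\cong M(\delta)\,\#\,L(p,q)$, where by Lemma \ref{Gordon1} the meridian of the solid--torus summand is the slope $\delta=p\alpha'+q\beta'$ on $T$, so that $\Delta(\beta',\delta)=p\ge 2$. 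Since $p>1$ we have $L(p,q)\not\cong S^3$, and comparing $M(\delta)\,\#\,L(p,q)\cong\widetilde b_2\,\#\,\widetilde b_3$ with the help of uniqueness of prime decomposition and the non--triviality of $b_2$ and $b_3$ shows that $M(\delta)$ is a non--trivial lens space and $\{L(p,q),M(\delta)\}=\{\widetilde b_2,\widetilde b_3\}$.

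Now $M$ is irreducible with a single torus boundary, is not a solid torus, and has the two fillings $M(\beta')=L(a,b)$ and $M(\delta)$, both with cyclic fundamental group, along slopes at distance $p\ge 2$; by the Cyclic Surgery Theorem of Culler--Gordon--Luecke--Shalen, $M$ is a Seifert fiber space. Present $M$ as a Seifert fibration over a surface $F$ having one boundary circle. If $\chi(F)<0$, then no Dehn filling of $M$ has cyclic fundamental group (the base orbifold of any such filling is a closed orbifold of non-positive Euler characteristic), contradicting $\pi_1(M(\beta'))\cong\mathbb Z/a$; so $F$ is a disk or a M\"obius band. When $F$ is a disk with $k$ cone points, $M(\beta')$ is a Seifert fibration of $L(a,b)$ over $S^2$ with $k$ or $k+1$ cone points, whence $k\le 2$ by Lemma \ref{Lens space SFS1}, while $k\ge 2$ because $M$ is not a solid torus; thus $M$ is Seifert fibered over a disk with exactly two exceptional fibres, as claimed.

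The point I expect to be hardest is eliminating the possibility that $F$ is a M\"obius band. A priori $M$ could then be $M(-1,1;(\alpha_1,\pm1))$ (compare Lemma \ref{Lens space SFS4}), which for $\alpha_1\ge 2$ is not Seifert fibered over a disk; the case $\alpha_1=1$ is harmless, since the orientable twisted $S^1$--bundle over the M\"obius band is $M(0,1;(2,1),(2,1))$ and so already falls under the disk case. To rule out $\alpha_1\ge 2$ I plan to examine the fibre slopes of $C_{p,q}$ and of $M$ on $T$: for such an $M$ all lens--space fillings lie within distance one of its fibre slope, so $p=\Delta(\beta',\delta)\le 2$; combining this with the way $C_{p,q}$ and $M$ are glued along $T$, one should find that this case forces $\widetilde O$ to be Seifert fibered or reducible -- contrary to hypothesis -- or else forces $K'$ to lie on a Heegaard torus of $L(a,b)$, in which case $M$ already has the asserted form by Lemma \ref{Lens space SFS3}.
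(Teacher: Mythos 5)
Your overall strategy coincides with the paper's: exhibit the two fillings $M(\beta')=L(a,b)$ and $M(p\alpha'+q\beta')=$ a lens space, at distance $p>1$ from each other, invoke the Cyclic Surgery Theorem to conclude $M$ is Seifert fibered, and then pin down the base. The disk case is handled correctly and essentially as in the paper ($k\le 2$ from Lemma \ref{Lens space SFS1}, $k\ge 2$ because otherwise $M$ would be a solid torus with compressible boundary).

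The gap is the M\"obius band case, which you yourself flag as the hardest point but do not actually close: you only announce a plan (``I plan to examine\dots'', ``one should find\dots''), and the one concrete inference you offer there is not valid. From the fact that every lens-space filling of $M=M(-1,1;(\alpha_1,\beta_1))$ with $\alpha_1\ge 2$ meets the fibre slope $h$ once, you conclude $\Delta(\beta',\delta)\le 2$. But two slopes each meeting $h$ once have the form $s+b_1h$ and $s+b_2h$ in a section--fibre basis $(s,h)$, so their mutual distance is $|b_1-b_2|$, which is a priori unbounded; small distance to a common third slope does not control their distance to each other. The paper closes this case by a short explicit computation that you need (or an equivalent of): a lens-space filling of $M(-1,1;(\alpha_1,\beta_1))$ must add an ordinary fibre $(1,b_i)$, and by Lemma \ref{Lens space SFS1}(2) the result $M(-1,0;(\alpha_1,\beta_1+\alpha_1 b_i))$ is a lens space only if $\beta_1+\alpha_1 b_i=\pm1$; running over the four sign combinations forces either $b_1=b_2$ or $\alpha_1(b_1-b_2)=\pm2$ with $\alpha_1>1$, hence $|b_1-b_2|\le 1$ in every case, contradicting the fact that the two filling slopes are at distance $p>1$. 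Without this step the proposition is not proved. (Your preliminary reduction to ``disk or M\"obius band'' via $\chi(F)<0$ is also stated loosely -- the filling along the fibre slope does not inherit the capped-off base orbifold -- but this can be repaired, e.g.\ by citing Lemma \ref{Lens space SFS1} directly as the paper does.)
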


\begin{proof}
$\infty$-surgery along $K$, i.e.not doing surgery, produces the lens space $L(a,b)$, which also means $\beta'$-Dehn filling (i.e.$\infty$-Dehn filling) on $M$ produces $L(a,b)$. The boundary slope of an essential annulus in $\widetilde{O}$ is actually the slope $r=pq$ along $K$. Performing $pq$-surgery along $K$, by Lemma \ref{Gordon2}, we have that $N(K')$ transforms into $S^1 \times D^2 \# L(p,q)$. According to Lemma \ref{Gordon1}, $p \alpha'+ q \beta'$ bounds a disk in $S^1 \times D^2 \# L(p,q)$. It means that $pq$-surgery along $K$ produces $M(q/p) \# L(p,q)$. According to our equations, we expect that $M(q/p)$ is a lens space and $L(p,q) \neq S^3$ (i.e.$p \neq 1$). So now we have $M(1/0)$ and $M(q/p)$ are two lens spaces with $p \neq 1$. Obviously $\triangle (1/0,q/p)>1$, by Cyclic Surgery Theorem \cite{Culler}, $M$ must be a Seifert fiber space.

$M$ is a Seifert fiber space admitting two Dehn filings to produce lens space. By Lemma \ref{Lens space SFS1}, $M$ can be a Seifert fiber space over a disk with at most two exceptional fibers or a Seifert fiber space over a $M\ddot{o}bius \, band$ with at most one exceptional fiber. If $M$ is a Seifert fiber space over a disk, then $M$ must have exact two exceptional fibers. Because otherwise $\partial M$ is compressible in $M$, which contradicts our previous analysis. If $M$ is over a $M\ddot{o}bius \, band$ without exceptional fiber, then we can choose another fibration of $M$ such that $M$ is a Seifert fiber space over a disk with 2 exceptional fibers, since $M(-1,1;)=M(0,1;(2,1),(2,-1))$. Suppose $M=M(-1,1; (a,b))$ with $a>1$. There are two Dehn fillings with $\triangle >1$ on $M$ producing lens spaces as our analysis above. Assume that the two fillings add two fibers $(a_1,b_1)$ and $(a_2,b_2)$ to $M$ respectively ($a_i=1$ means adding an ordinary fiber; $a_i>1$ means adding an exceptional fiber). Then we obtain $M(-1,0;(a,b),(a_1,b_1))$ and $M(-1,0; (a,b),(a_2,b_2))$. To be lens spaces, by Lemma \ref{Lens space SFS1}, we must have $a_1=a_2=1$, and then $M(-1,0;(a,b),(a_1,b_1))=M(-1,0;(a,b+ab_1))$ and $M(-1,0;(a,b),(a_2,b_2))=M(-1,0;(a,b+ab_2))$ satisfying one of the four systems of equations:\\

$\left\{\begin{matrix}
b+ab_1=1\\
b+ab_2=1
\end{matrix}\right.$ \quad
$\left\{\begin{matrix}
b+ab_1=-1\\
b+ab_2=-1
\end{matrix}\right.$ \quad
$\left\{\begin{matrix}
b+ab_1=-1\\
b+ab_2=1
\end{matrix}\right.$ \quad
$\left\{\begin{matrix}
b+ab_1=1\\
b+ab_2=-1
\end{matrix}\right.$.

The first two systems of equations are impossible. Because if one holds, then $a(b_1-b_2)=0$, so $b_1=b_2$, which contradicts $\triangle >1$. The last two systems of equations are also impossible. Because if one holds, then $a(b_1-b_2)=\pm 2$ and $a>1$, thus $b_1-b_2=\pm 1$. It implies $\triangle(b_1/a_1, b_2/a_2)=1$, which also contradicts our analysis. Therefore $M$ is a Seifert fiber space over a disk with exact two exceptional fibers.

\end{proof}

Now, we have $M=L(a,b)\setminus N(K')$ is a Seifert fiber space over a disk with 2 exceptional fibers, so $K'$ is isotopic to a fiber in some generalized Seifert fibration of $L(a,b)$ over a 2-sphere. Moreover, the fiber is an ordinary fiber, for otherwise $M=L(a,b) \setminus N(K')$ has at most one exceptional fiber. In fact, we can regard $L(a,b)$ as a union of two solid tori, $T_1$ and $T_2$, and $K'$ is isotopic to a $(p_1,q_1)$-torus knot in $T_1$ for some $p_1 >1, q_1 \in \mathbb{Z}$. Because if $p_1=1$, then $M=L(a,b)\setminus N(K')$ is a solid torus instead of a Seifert fiber space with 2 exceptional fibers; if $p_1=0$, then $K$ is lying in a ball, thus either $\widetilde{O}=L(a,b)\setminus N(K)$ is reducible which contradicts our assumption, or $L(a,b)=S^3$. $L(a,b)=S^3$ is impossible, since if it is and $p_1=0$, then $M=L(a,b)\setminus N(K')=S^3\setminus N(K')$ is a solid torus. We chose a longitude-meridian basis $(\lambda_1, \mu_1)$ for $\partial N(K')$ using the same principle as choosing basis for $J$ in $S^1 \times D^2$. Now we redefine the homeomorphism $f: S^1 \times D^2 \rightarrow N(K')$ such that $(f|_{S^1 \times \partial D^2})_*(\alpha) =\lambda_1$ and $(f|_{S^1 \times \partial D^2})_*(\beta) =\mu_1$, then $K=f(J)$ where $J=T_{p,q}$ for some $p>1$, $q \in \mathbb{Z}$. So now, $K$ is a $(p,q)$-cable of $(p_1,q_1)$-torus knot lying in $T_1$ of $L(a,b)$ with $p>1$ and $p_1>1$.

\begin{prop}
\label{Prop2}
$K$ is a $(p,q)$-cable of $(p_1,q_1)$-torus knot lying in $T_1$ of $L(a,b)=T_1 \cup T_2$ with $q=pp_1q_1 \pm 1$ and $|aq_1-bp_1|>1$, where $p>1$, $p_1>1$, $0<\frac{b}{a}\leq 1 (or \,\, \frac{b}{a}=\frac{1}{0}=\infty)$, and $T_i$ is a solid torus, for $i=1,2$. When the surgery slope is $r=pq$, the manifold obtained is $L(p,q) \# L(a \pm ap_1q_1p \mp bp_1^2p, b \pm aq_1^2p \mp bp_1q_1p)$.
\end{prop}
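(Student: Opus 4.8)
The plan is to establish the three assertions in turn by recording the Seifert invariants of the manifolds involved and feeding them into the results of Gordon and Darcy--Sumners quoted above. First I would pin down the Seifert fibration of $M=L(a,b)\setminus N(K')$ explicitly, which yields $|aq_1-bp_1|>1$ at once from Proposition~\ref{Prop1}; then I would express the $pq$--surgery as $\widetilde{O}(pq)=M(q/p)\#L(p,q)$ via Gordon's two lemmas; then exploit the fact that $M(q/p)$ must itself be a genuine lens space to force $q=pp_1q_1\pm1$; and finally read off the remaining lens summand from Lemma~\ref{Lens space SFS1}.

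For the Seifert data of $M$: since $K'$ is isotopic to a $(p_1,q_1)$--torus knot sitting as a regular fiber in the solid torus $T_1$ of a Seifert fibration of $L(a,b)=T_1\cup T_2$ over $S^2$, Corollary~\ref{Lens space SFS2} gives $L(a,b)=M(0,0;(p_1,-e),(aq_1-bp_1,ad-be))$ with $p_1d-q_1e=1$, and removing the regular fiber $N(K')$ leaves $M=M(0,1;(p_1,-e),(aq_1-bp_1,ad-be))$, a Seifert fibered space over a disk whose two exceptional fibers have orders $p_1$ and $|aq_1-bp_1|$ (the latter being $|(aL_1+bM_1)\cdot(p_1L_1+q_1M_1)|$). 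By Proposition~\ref{Prop1} there are \emph{exactly} two exceptional fibers, so both orders exceed $1$; as $p_1>1$ is already known, $|aq_1-bp_1|>1$. Next, since $K=f(T_{p,q})$ lies in $N(K')$, we have $\widetilde{O}=M\cup_T C_{p,q}$ with $T=\partial N(K')$ and $C_{p,q}=S^1\times D^2\setminus N(T_{p,q})$; doing $pq$--surgery on $K$ turns the $N(K')$--side into $S^1\times D^2\#L(p,q)$ by Lemma~\ref{Gordon2}, and pushing the summing sphere off $T$ gives $\widetilde{O}(pq)=(M\cup_T S^1\times D^2)\#L(p,q)$, where by Lemma~\ref{Gordon1} (winding number $w=p$, and $m/n=pq/1$) the new solid torus has meridian $p\alpha'+q\beta'=p\lambda_1+q\mu_1$; hence $M\cup_T S^1\times D^2=M(q/p)$ and $\widetilde{O}(pq)=M(q/p)\#L(p,q)$.

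To fix $q$: as recorded above, the slope $\eta$ realizing the tangle replacement by $X_2$ is the cabling slope $pq$, so $M(q/p)\#L(p,q)=\widetilde{O}(pq)=\widetilde{O}(\eta)=\widetilde{b_2}\#\widetilde{b_3}$ is a connected sum of two genuine lens spaces (each of order $\ge2$ since $b_2,b_3$ are nontrivial); as $p>1$, the summand $L(p,q)$ accounts for one of them and $M(q/p)$ must be a genuine lens space. By Lemma~\ref{Lens space SFS3} with $s/t=q/p$, $M(q/p)=M(0,0;(p_1,-e),(aq_1-bp_1,ad-be),(q-pp_1q_1,p))$; here $\gcd(q-pp_1q_1,p)=\gcd(q,p)=1$, so $q-pp_1q_1\ne0$ (as $p>1$), and the three Seifert pairs have orders $p_1$, $|aq_1-bp_1|$, $|q-pp_1q_1|$, of which the first two exceed $1$. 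If the third also exceeded $1$, then $M(q/p)$ would be a Seifert fibered space over $S^2$ with three genuine exceptional fibers, whose fundamental group is either infinite or finite and non-cyclic, hence never that of a lens space; therefore $|q-pp_1q_1|=1$, i.e.\ $q=pp_1q_1\pm1$. Finally, writing $q-pp_1q_1=\pm1$, the third fiber $(\pm1,p)$ is an ordinary fiber and may be absorbed, so $M(q/p)=M(0,0;(p_1,-e),(aq_1-bp_1,\,ad-be\pm(aq_1-bp_1)p))$; Lemma~\ref{Lens space SFS1}(1) then presents this as $L(a',b')$ with $a'=p_1(ad-be\pm(aq_1-bp_1)p)-(aq_1-bp_1)e=a(p_1d-q_1e)\pm p_1p(aq_1-bp_1)=a\pm ap_1q_1p\mp bp_1^2p$ using $p_1d-q_1e=1$, and a parallel computation (again using $p_1d-q_1e=1$, and remembering that lens space parameters are defined up to sign and inversion) gives $b'=b\pm aq_1^2p\mp bp_1q_1p$; hence $\widetilde{O}(pq)=L(p,q)\#L(a\pm ap_1q_1p\mp bp_1^2p,\,b\pm aq_1^2p\mp bp_1q_1p)$.

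The main obstacle is the step forcing $q=pp_1q_1\pm1$: one must be careful with the \emph{generalized} Seifert invariants permitted here, ruling out a degenerate $(0,\cdot)$ exceptional fiber (this is exactly where $\gcd(q-pp_1q_1,p)=1$ enters) and invoking the classification of Seifert fibered spaces over $S^2$ with three exceptional fibers to conclude that a cyclic fundamental group is impossible; a lesser but real difficulty is bookkeeping the signs through the absorption of the $(\pm1,p)$ fiber and the inversion ambiguity in $L(a',b')$ when matching the stated formula.
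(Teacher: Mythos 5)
Your proposal is correct and follows essentially the same route as the paper: Gordon's two lemmas to write $\widetilde{O}(pq)=M(q/p)\#L(p,q)$, Lemma \ref{Lens space SFS3} to read off the Seifert invariants, the observation that three exceptional fibers of order $>1$ preclude a lens space to force $|q-pp_1q_1|=1$, and Lemma \ref{Lens space SFS1} for the final parameters. The only cosmetic differences are that you deduce $|aq_1-bp_1|>1$ from the exceptional-fiber count of Proposition \ref{Prop1} rather than from the paper's winding-number-in-$T_2$ argument, and you supply a couple of details (the $\gcd$ check ruling out a degenerate $(0,\cdot)$ fiber, and the explicit appeal to the classification of small Seifert fibered spaces) that the paper leaves implicit.
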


\begin{proof}
By Lemma \ref{Gordon2} and Lemma \ref{Gordon1},
\[K(pq)=[L(a,b) \setminus N(T_{p_1,q_1})](q / p) \# L(p, q),\]
where $K(r)$ denotes $r$-surgery along $K$. By Lemma \ref{Lens space SFS3},
\[[L(a,b) \setminus N(T_{p_1,q_1})](q / p) = M(0,0;(p_1,-e),(aq_1-bp_1,ad-be),(q-pp_1q_1, p)),\]
where $p_1d-q_1e=1$. We want it to be a lens space, and that happens if and only if one of $|q-pp_1q_1|$, $|aq_1-bp_1|$ and $|p_1|$ equals 1. In fact, $|aq_1-bp_1|>1$, for the same reason as $p_1>1$ since a $(p_1,q_1)$-torus knot in $T_1$ of $L(a,b)$ is also a torus knot with winding number $|aq_1-bp_1|$ in $T_2$. So we have $|q-pp_1q_1|=1$. There are two cases.

(\romannumeral1) $q-pp_1q_1=1$
\begin{align*}
[L(a,b) \setminus N(T_{p_1,q_1})](q / p) &= M(0,0;(p_1,-e),(aq_1-bp_1,ad-be),(1,p))\\
                                         &=M(0,0;(p_1,-e),(aq_1-bp_1,ad-be+aq_1p-bp_1p)).
\end{align*} Using the formula in Lemma \ref{Lens space SFS1}, then
\begin{align*}
&M(0,0;(p_1,-e),(aq_1-bp_1,ad-be+aq_1p-bp_1p))\\
&=L(a+ap_1q_1p-bp_1^2p, b+aq_1^2p-bp_1q_1p).
\end{align*}

(\romannumeral2) $q-pp_1q_1=-1$
\begin{align*}
[L(a,b) \setminus N(T_{p_1,q_1})](q / p) &=M(0,0;(p_1,-e),(aq_1-bp_1,ad-be),(1,-p))\\
                                         &=M(0,0;(p_1,-e),(aq_1-bp_1,ad-be-aq_1p+bp_1p)).
\end{align*} Using the formula in Lemma \ref{Lens space SFS1},
\begin{align*}
&M(0,0;(p_1,-e),(aq_1-bp_1,ad-be-aq_1p+bp_1p))\\
&=L(a-ap_1q_1p+bp_1^2p, b-aq_1^2p+bp_1q_1p).
\end{align*}
\end{proof}

Before proving the main theorem, we will give a useful proposition about the mapping class group of four-times-punctured sphere denoted by $S_{0,4}$. We use $Mod$ to denote mapping class group.

\begin{prop}{\rm \cite{Farb}}
\[Mod(S_{0,4}) \cong PSL(2,\mathbb{Z}) \ltimes (\mathbb{Z}_2 \times \mathbb{Z}_2),\]
\end{prop}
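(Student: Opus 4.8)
The plan is to realize $S_{0,4}$ as the base of the double branched cover $p:T^{2}\to S^{2}$ over four points and then to transfer the computation to the well-understood mapping class group of the torus. Model $T^{2}=\mathbb{R}^{2}/\mathbb{Z}^{2}$, let $\iota:T^{2}\to T^{2}$ be the elliptic involution $x\mapsto -x$, and let $\widetilde{P}=\frac{1}{2}\mathbb{Z}^{2}/\mathbb{Z}^{2}$ be its four fixed points; then $T^{2}/\iota\cong S^{2}$, the branch set $P$ is the image of $\widetilde{P}$, and $(S^{2},P)$ is a model for $S_{0,4}$.

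First I would check that this branched cover is canonical for every mapping class of $S_{0,4}$. Any homeomorphism $\phi$ of $(S^{2},P)$ sends a meridian of each $p_{i}$ to a meridian of $\phi(p_{i})$, hence preserves the unique homomorphism $\pi_{1}(S^{2}\setminus P)\to\mathbb{Z}/2$ carrying every meridian to $1$; therefore $\phi$ lifts to a homeomorphism $\widetilde{\phi}$ of $T^{2}$, which permutes $\widetilde{P}$ (since $p\widetilde{\phi}=\phi p$) and commutes with $\iota$ (since $\widetilde{\phi}\iota\widetilde{\phi}^{-1}$ is a nontrivial deck transformation). Conversely every $\iota$-equivariant homeomorphism of $(T^{2},\widetilde{P})$ descends to $(S^{2},P)$. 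Invoking the Birman--Hilden theorem for this cover, in the version that keeps the ramification points as marked points, I would obtain a central extension
\[
1\longrightarrow\langle\iota\rangle\longrightarrow\mathrm{SMod}(T^{2},\widetilde{P})\longrightarrow Mod(S_{0,4})\longrightarrow 1 ,
\]
where $\mathrm{SMod}$ is the $\iota$-equivariant mapping class group. A check on generators --- the linear maps commute with $\iota=-I$, and a half-translation $\tau_{v}$ with $2v\in\mathbb{Z}^{2}$ satisfies $\iota\tau_{v}\iota^{-1}=\tau_{-v}=\tau_{v}$ --- shows that every mapping class of $(T^{2},\widetilde{P})$ is $\iota$-equivariant, so $\mathrm{SMod}(T^{2},\widetilde{P})=Mod(T^{2},\widetilde{P})$ and hence $Mod(S_{0,4})\cong Mod(T^{2},\widetilde{P})/\langle\iota\rangle$.

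Next I would compute $Mod(T^{2},\widetilde{P})$. Forgetting the marked points gives a surjection $Mod(T^{2},\widetilde{P})\to Mod(T^{2})\cong SL(2,\mathbb{Z})$, split by the affine section $A\mapsto[x\mapsto Ax]$. Its kernel is the point-pushing subgroup $N$; from the Birman exact sequence for forgetting a finite point set, corrected by the image of $\pi_{1}$ of the identity component $\mathrm{Homeo}_{0}(T^{2})\simeq T^{2}$ of the homeomorphism group (the isotopically trivial rigid translations), I would identify $N$ with the group $\{\tau_{v}:v\in\frac{1}{2}\mathbb{Z}^{2}/\mathbb{Z}^{2}\}\cong\mathbb{Z}_{2}\times\mathbb{Z}_{2}$ of half-translations: each $\tau_{v}$ becomes trivial after forgetting $\widetilde{P}$, satisfies $\tau_{v}^{2}=\tau_{2v}\simeq\mathrm{id}$, and is nontrivial in $Mod(T^{2},\widetilde{P})$ because it permutes $\widetilde{P}$, while that $N$ contains nothing more is the technical core. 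Since conjugating $\tau_{v}$ by a linear map yields $\tau_{Av}$, the subgroup $N$ is normal, so $Mod(T^{2},\widetilde{P})\cong SL(2,\mathbb{Z})\ltimes(\mathbb{Z}_{2}\times\mathbb{Z}_{2})$. Finally $\iota=-I$ is central in $SL(2,\mathbb{Z})$ and acts trivially on $N$ (as $-v=v$ for $2$-torsion $v$), hence is central in $Mod(T^{2},\widetilde{P})$; quotienting by $\langle\iota\rangle$ absorbs $-I$ into the left factor and gives
\[
Mod(S_{0,4})\cong\bigl(SL(2,\mathbb{Z})/\{\pm I\}\bigr)\ltimes(\mathbb{Z}_{2}\times\mathbb{Z}_{2})=PSL(2,\mathbb{Z})\ltimes(\mathbb{Z}_{2}\times\mathbb{Z}_{2}).
\]
As a check, the induced action of $PSL(2,\mathbb{Z})$ on $\mathbb{Z}_{2}\times\mathbb{Z}_{2}$ is reduction mod $2$, $PSL(2,\mathbb{Z})\to PSL(2,\mathbb{F}_{2})\cong\mathrm{Aut}(\mathbb{Z}_{2}\times\mathbb{Z}_{2})$, so the stabilizer of the four punctures is $\Gamma(2)/\{\pm I\}\cong F_{2}$, consistent with the Birman sequence $1\to\pi_{1}(S_{0,3})\to PMod(S_{0,4})\to PMod(S_{0,3})=1$.

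The main obstacle is twofold. Conceptually, one must justify the Birman--Hilden step at genus $1$: the classical statement is usually given for covers whose total space has genus at least $2$, so here one appeals to the refinement valid for the elliptic involution, and it is essential to retain the ramification points as marked points --- dropping them would collapse the answer to $PSL(2,\mathbb{Z})$. Technically, the delicate point is pinning down the point-pushing kernel exactly: since $\mathrm{Homeo}_{0}(T^{2})$ is not simply connected, the naive Birman sequence must be corrected by $\pi_{1}(T^{2})$, and proving that after this correction only the half-translations remain amounts to a computation in the surface braid group $B_{4}(T^{2})$, equivalently to an index count using $PMod(S_{0,4})\cong F_{2}$ and $Mod(S_{0,4})/PMod(S_{0,4})\cong\Sigma_{4}$.
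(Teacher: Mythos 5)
The paper itself gives no proof of this proposition---it is quoted from Farb--Margalit---so the only question is whether your argument stands on its own, and it does not: the computation of $Mod(T^2,\widetilde{P})$ is wrong. The kernel $N$ of the forgetful map $Mod(T^2,\widetilde{P})\to Mod(T^2)$ is not $\mathbb{Z}_2\times\mathbb{Z}_2$; it is infinite. In the Birman sequence it is $B_4(T^2)$ modulo the image of $\pi_1(\mathrm{Homeo}_0(T^2))\cong\mathbb{Z}^2$, and that image is only the \emph{diagonal} copy of $\mathbb{Z}^2$ in which all four points translate together. Pushing a single point of $\widetilde{P}$ around a generator of $\pi_1(T^2)$ while the other three stay put survives this quotient and has infinite order; likewise the Dehn twist about a curve bounding a disk containing exactly two points of $\widetilde{P}$ is trivial in $Mod(T^2)$ but of infinite order in $Mod(T^2,\widetilde{P})$. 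Hence $Mod(T^2,\widetilde{P})\not\cong SL(2,\mathbb{Z})\ltimes(\mathbb{Z}_2\times\mathbb{Z}_2)$, and consequently $\mathrm{SMod}(T^2,\widetilde{P})\neq Mod(T^2,\widetilde{P})$: indeed such a twisting curve can never be isotoped to be $\iota$-invariant, since an orientation-preserving involution of a disk cannot have two interior fixed points. Your ``check on generators'' is circular---it verifies that affine maps are $\iota$-equivariant, but the premise that affine maps generate $Mod(T^2,\widetilde{P})$ is precisely what fails.

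The repair is to stay inside the symmetric mapping class group throughout: Birman--Hilden gives $Mod(S_{0,4})\cong \mathrm{SMod}(T^2,\widetilde{P})/\langle\iota\rangle$, and the substantive point is that every \emph{symmetric} class is affine. Equivalently, working downstairs as Farb--Margalit do, one shows that the kernel of the split surjection $Mod(S_{0,4})\to PSL(2,\mathbb{Z})$ induced by the action of a lift on $H_1(T^2)$ is exactly the Klein four-group of half-translations; this uses $PMod(S_{0,4})\cong F_2$ mapping isomorphically onto $\Gamma(2)/\{\pm I\}$, which you relegate to a consistency check at the end but which is in fact the missing ingredient rather than a corollary. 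The remaining parts of your write-up---the lifting criterion via the mod-$2$ homomorphism, the equivariance and centrality of $\iota$, the semidirect structure, and the action through $GL(2,\mathbb{F}_2)$---are fine.
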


\begin{rem}
The subgroup $\mathbb{Z}_2 \times \mathbb{Z}_2$ is generated by two elements of order 2, $c_1$ and $c_2$ shown as Figure \ref{c1c2}. The subgroup $PSL(2,\mathbb{Z})$ is generated by two half Dehn twists $\bar{\alpha}$ and $\bar{\beta}$ shown in Figure \ref{alphabeta}. $Mod(S_{0,4})=<\bar{\alpha}, \bar{\beta}, c_1, c_2>$.

\begin{figure}[H]
\centering
\subfigure[$c_1$ and $c_2$]{\label{c1c2}
\includegraphics[width=0.2\textwidth]{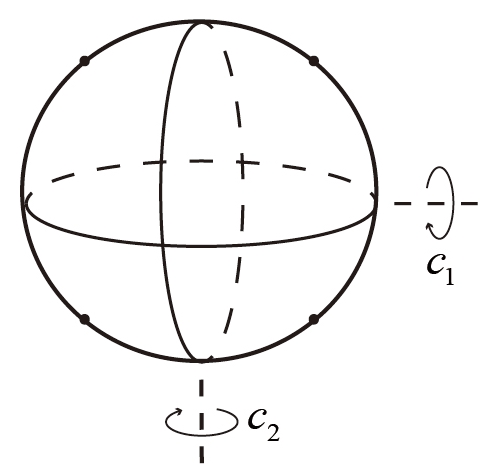}}\quad
\subfigure[Half Dehn twists $\bar{\alpha}$ and $\bar{\beta}$]{\label{alphabeta}
\includegraphics[width=0.72\textwidth]{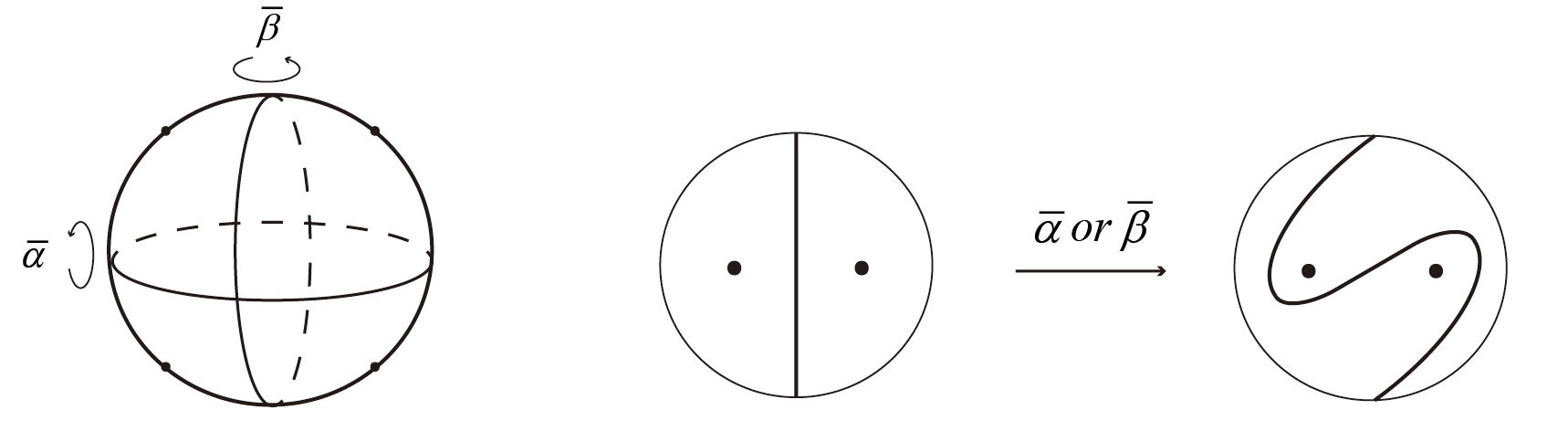}}
\caption{The generators of $Mod(S_{0,4})$}
\end{figure}

\end{rem}

\begin{defn}
Given a tangle $X=(M,t)$ and a Conway sphere $S$ in it, split the tangle $X$ along $S$ into two pieces $M_1$ and $M_2$. Let $h: S \rightarrow S^2$ be a homeomorphism such that $h(S \cap t)=P=4 \,\, points$. The tangle $X'=M_1 \cup_{h^{-1}gh} M_2$, where $g: (S^2, P) \rightarrow (S^2, P)$ such that $[g] \in  \langle c_1,c_2 \rangle - \{1\} \in Mod(S_{0,4})$, is called a {\it mutant} of $X$, and the operation of replacing $X$ by $X'$ is called {\it mutation} of $X$ along $S$.
\end{defn}

\begin{proof}[proof of Theorem \ref{mainthm}]

Assume that $b_1=b(a,b)$ for a pair of relatively prime integers $(a,b)$ satisfying $0<\frac{b}{a}\leq 1 (or \,\, \frac{b}{a}=\frac{1}{0}=\infty)$. As discussed above, the solution of this system of tangle equations exists only if there exist two pairs of relatively prime integers $(p_1,q_1)$ and $(p,q)$ satisfying that $p>1,p_1>1,|aq_1-bp_1|>1$ and $q=pp_1q_1\pm 1$ such that $\widetilde{b_2} \# \widetilde{b_3}=L(p,q) \# L(a \pm ap_1q_1p \mp bp_1^2p, b \pm aq_1^2p \mp bp_1q_1p)$. In this case, $O$ should be a tangle whose double branched cover is $\widetilde{O}$, and $\widetilde{O}=L(a,b) \setminus N(K)$ where $K$ is a $(p,q)$-cable of $K'=T_{p_1,q_1}$ lying in $T_1$ of $L(a,b)=T_1 \cup T_2$.

We first construct a tangle satisfying that its double branched cover is $\widetilde{O}$. Then we will show that any tangle whose double branched cover is $\widetilde{O}$ is homeomorphic to the tangle we construct. As discussed in Proposition \ref{Prop2}, there are two cases.

Case (\romannumeral1): $q=pp_1q_1+1$.

Let $f:S^1 \times D^2 \rightarrow N(K')$ be the homeomorphism defined as above. Then $T=f(S^1 \times \partial D^2)$ is an essential torus in $\widetilde{O}$ and also the only one essential torus in $\widetilde{O}$. Splitting $\widetilde{O}$ along the essential torus $T$, we obtain two manifolds $M=L(a,b) \setminus N(K')$ and $C_{p,q}$. By Corollary \ref{Lens space SFS2}, $M=M(0,1;(p_1,-e),(aq_1-bp_1,ad-be))$ where $p_1d-q_1e=1$. The cable space $C_{p,q}$ is the Seifert fiber space $M(0,2;(p,1))$. According to the results about double branched covers listed in Section \ref{section of Double branched covers}, $M$ is the double branched cover of a tangle $Q$ shown as Figure \ref{tangleQ}. We denote the associated standard involution of $M$ by $\sigma_1$. Meanwhile, $C_{p,q}=M(0,2;(p,1))$ is the double branched cover of a tangle $P$ shown in Figure \ref{tangleP}, which is a Montesinos pair in $S^2 \times I$. The associated standard involution of $C_{p,q}=M(0,2;(p,1))$ is denoted by $\sigma_2$. Restrict $\sigma_1$(resp.$\sigma_2$) on the torus boundary of $M$(resp.$C_{p,q}$), it is the so-called standard involution of a torus. In fact, given an isotopy class of homeomorphisms of the torus, there exists a representative which commutes with the standard involution, so we can extend the involutions $\sigma_1$ and $\sigma_2$ of $M$ and $C_{p,q}$ to an involution $\sigma$ of $\widetilde{O}$. In other words, there is a tangle $O_1$ which is obtained by gluing $Q$ and $P$ together satisfying that its double branched cover is $\widetilde{O}$. Let $f:\partial M \rightarrow \partial C_{p,q}$ be the gluing map to obtain $\widetilde{O}$, then the gluing map $\bar{f}: \partial Q \rightarrow \partial P$ to give $O_1$ is induced by $f$. Here we just give a tangle $O_1$ shown in Figure \ref{tangleO1}, and one can easily check $O_1$'s double branched cover is homeomorphic to $\widetilde{O}$ by carefully studying the two gluing maps $f$ and $\bar{f}$.

\begin{figure}
\centering
\subfigure[The tangle $Q$, where $A=\frac{-e}{p_1}$, $B=\frac{ad-be}{aq_1-bp_1}$, and $p_1d-q_1e=1$]
{\label{tangleQ}\includegraphics[width=0.29\textwidth]{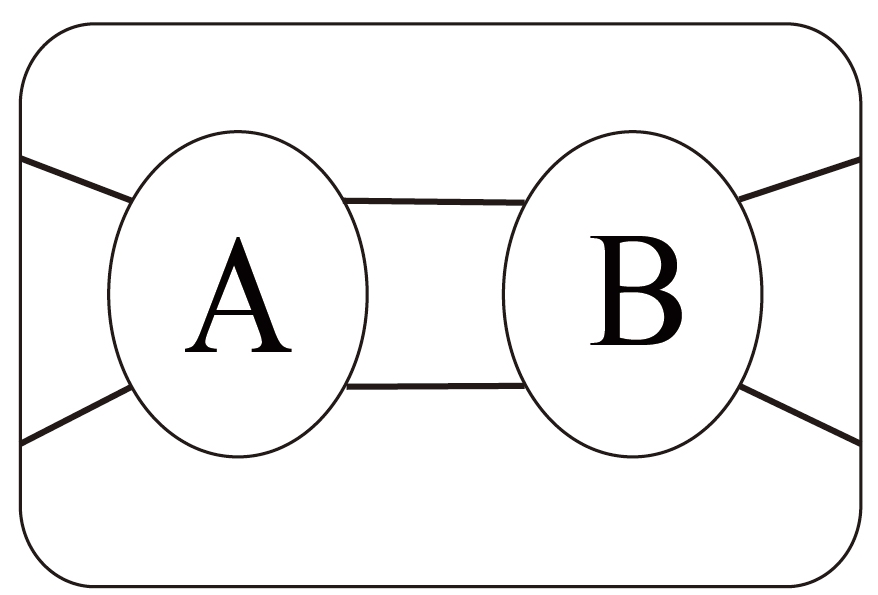}}\quad\qquad\qquad
\subfigure[The tangle $P$]
{\label{tangleP}\includegraphics[width=0.25\textwidth]{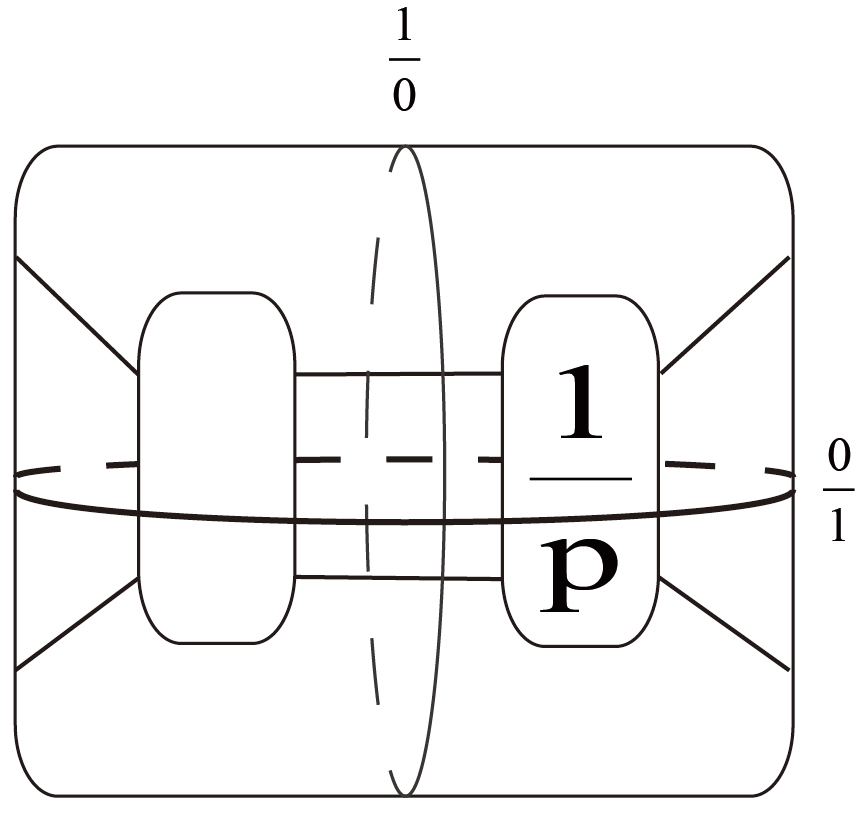}}
\caption{The tangle $Q$ and $P$}
\end{figure}

As discussed above, performing $\infty$-filling on $C_{p,q}$ (i.e.$\infty$-surgery along $K$) gives the original lens space $L(a,b)$, and this is equivalent to filling a $(1,0)$-fiber in $M(0,2;(p,1))$, namely $C_{p,q}(\infty)=M(0,1;(p,1),(1,0))$. Performing $pq$-filling on $C_{p,q}$ (i.e.$pq$-surgery along $K$) gives the connected sum of a solid torus and $L(p,q)$, which is equivalent to filling a $(0,1)$-fiber in $M(0,2;(p,1))$, namely $C_{p,q}(pq)=M(0,1;(p,1),(0,1))$. The two Dehn filling slopes on $\partial C_{p,q}$ are mapped respectively to two slants $\frac{0}{1}$ and $\frac{1}{0}$ on the corresponding boundary component of the tangle $P$ by the covering map induced by $\sigma_2$. In Figure \ref{tangleP}, the thick curve stands for the $\frac{0}{1}$ slant, and the thin curve is the $\frac{1}{0}$ slant. After gluing $P$ and $Q$ together, the two corresponding slants on $\partial O_1$ are shown in Figure \ref{tangleO1}. It means adding rational tangles which have the two slants as meridians respectively to $O_1$ give the 2-bridge link and the connected sum of two 2-bridge links we want. There is no other slant satisfying this since there is no other Dehn surgery slope along $K$ giving the original $L(a,b)$ or a non-prime manifold. In fact, only the Dehn filling slope $r=pq$ gives a non-prime manifold. To obtain the original $L(a,b)$, it probably happens only in the case that the slope $r=\frac{m}{n}$ and $m=npq \pm 1$, by Lemma \ref{Gordon2}, since the other two cases in Lemma \ref{Gordon2} produce a non-prime manifold or a toroidal manifold. Using the formula in Lemma \ref{Gordon1},
\[K(r)=L(a,b)\setminus N(T_{p_1,q_1})(m/np^2).\]
According to Lemma \ref{Lens space SFS3},
\[L(a,b) \setminus N(T_{p_1,q_1})(m/np^2)=M(0,0;(p_1,-e),(aq_1-bp_1,ad-de),(m-np^2p_1q_1,np^2))\]
where $p_1d-q_1e=1$. $M(0,0;(p_1,-e),(aq_1-bp_1,ad-de),(m-np^2p_1q_1,np^2))=L(a,b)$ if and only if $m-np^2p_1q_1=1$ and $np^2=0$ since $p_1>1$ and $|aq_1-bp_1|>1$. Thus $r=\frac{m}{n}=\frac{1}{0}=\infty$ since $p>1$. Therefore, as shown in Figure \ref{tangleO1}, only adding $0$-tangle and $\infty$-tangle give the links we want. One can easily check that $N(O_1+0)=b(a,b)$ and $N(O_1+\infty)=b(p,q) \#  b(a+ap_1q_1p-bp_1^2p, b+aq_1^2p-bp_1q_1p)$. This gives a pair of solutions $(X_1,X_2)$ when $O=O_1$ for the given tangle equations.

\begin{figure}
\centering
\subfigure{\includegraphics[width=1.6in]{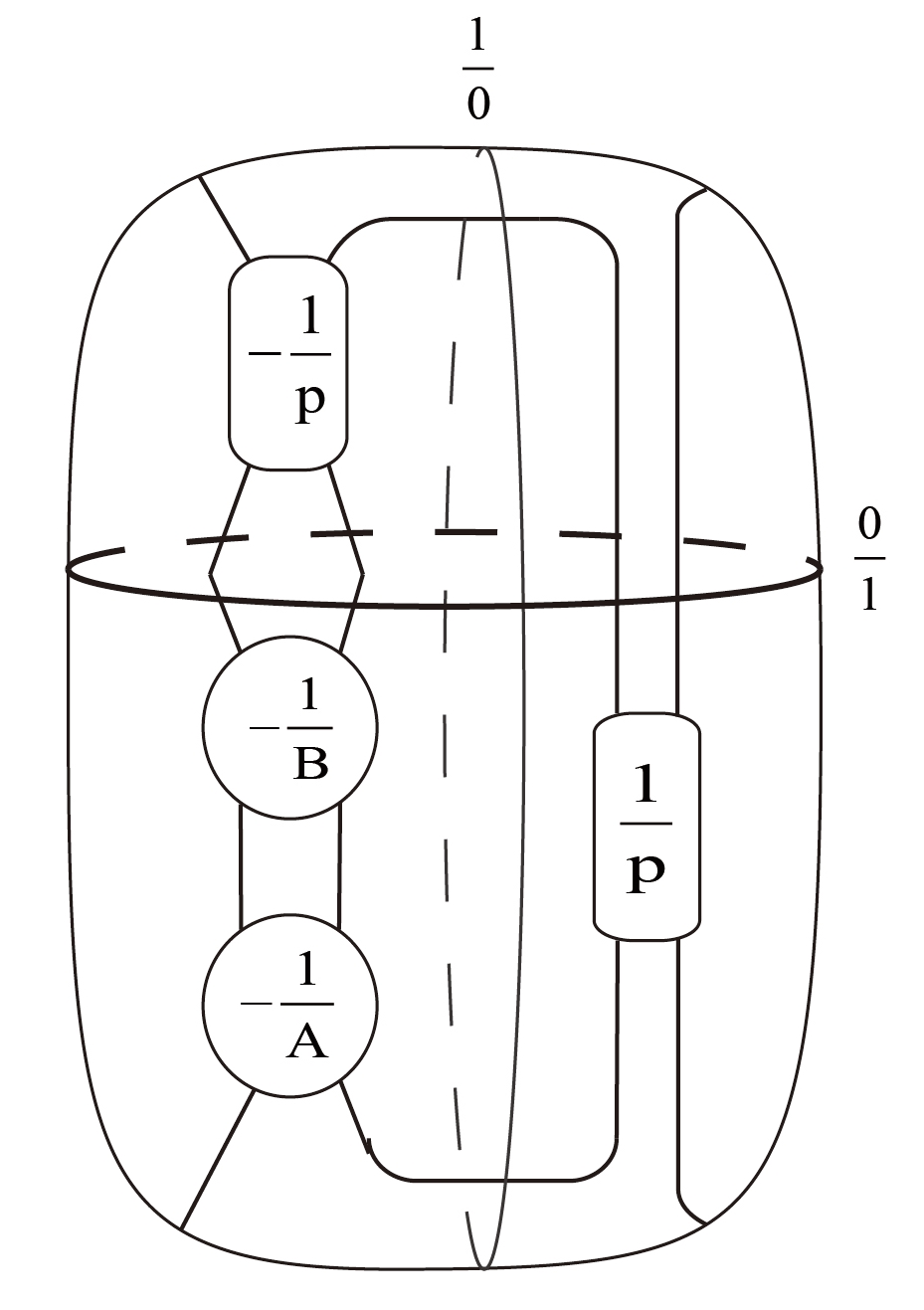}}
\subfigure{\includegraphics[width=0.18\textwidth]{solution2.jpg}}
\caption{The tangle $O_1$ and the corresponding pair of solutions $(X_1,X_2)$, where $A=\frac{-e}{p_1}$, $B=\frac{ad-be}{aq_1-bp_1}$, and $p_1d-q_1e=1$}\label{tangleO1}
\end{figure}

Now we denote $M$(resp.$C_{p,q}$) by $M_1$(resp.$M_2$). $\sigma_i$ is still the standard involution on $M_i$. $\sigma$ is the involution on $\widetilde{O}$ which is extended by $\sigma_1$ and $\sigma_2$. As discussed above, $\widetilde{O}/\sigma=O_1=M_1/\sigma_1 \cup_{\bar{f}} M_2/\sigma_2=Q\cup_{\bar{f}}P$.

Suppose $O'$ is a tangle whose double branched cover is homeomorphic to $\widetilde{O}$, and $\iota : \widetilde{O} \rightarrow \widetilde{O}$ is the associated involution, namely $\widetilde{O} / \iota=O'$. Since $T$ is the only one essential torus in $\widetilde{O}$, up to isotopy, we can assume that $T$ is invariant under $\iota$ by Theorem 8.6 in \cite{Scott}. $M_1$ is not homeomorphic to $M_2$, so $\iota$ preserves $T$, $M_1$ and $M_2$. Let $\iota_i$ be the restriction of $\iota$ to $M_i$.

$M_2$ is a Seifert fiber space over an annulus with one exceptional fiber. Restrict $\iota_2$ to $\partial \widetilde{O} \subset \partial M_2$, then it is the standard involution on this tours boundary since $\widetilde{O} / \iota$ is a tangle. Lemma 3.8 in \cite{Gordon1} tells us there exists a homeomorphism $\phi_2 : M_2 \rightarrow M_2$ isotopic to the identity such that $\iota_2=\phi^{-1}_2 \sigma_2 \phi_2$. Then $\iota$ restricted to $T$ is also the standard involution for $T$ as a torus. Therefore $T / (\iota\mid_{T})$ is a Conway sphere in $O'$. Then $M_1$ is the double branched cover of the tangle $O'- M_2 / \iota_2$, i.e. the tangle inside the Conway sphere $T / (\iota\mid_{T})$ in $O'$. According to Proposition 2.8 in \cite{Paoluzzi}, there is only one involution up to conjugation on $M_1$ satisfying that $M_1 /$(the involution) is a tangle, and $\sigma_1$ is such an involution. So there exists a homeomorphism $\phi_1 : M_1 \rightarrow M_1$ such that $\iota_1=\phi^{-1}_1 \sigma_1 \phi_1$. It will be shown that we can choose a $\phi_1$ such that $\partial \phi_1 : \partial M_1 \rightarrow \partial M_1$ is isotopic to the identity.

We can assume that $\phi_1$ is orientation-preserving, since we can easily find an orientation-reversing homeomorphism which commutes with $\sigma_1$. Also we assume $\phi_1$ preserves the orientation of fiber by multiplying with $\sigma_1$ or not. By Proposition 25.3 in \cite{Johannson}, the mapping class group of Seifert fiber space with orientable orbit surface except some special cases is the semidirect product of "vertical subgroup" and the extended mapping class group of the orbit surface. The "vertical subgroup" is generated by some Dehn twists along vertical annulus or torus and acts trivially on the orbit surface denoted by $F$. Besides, by the extended mapping class group of the orbit surface $F$ we mean the group of homeomorphisms of $F$ which send exceptional points to exceptional points with the same coefficients, modulo isotopies which are constant on the exceptional points. Another useful result is that the "vertical subgroup" is isomorphic to $H_1(F, \partial F)$. For $M_1$, the "vertical subgroup" is trivial since the first relative homology group of its orbit surface is trivial. Thus, the mapping class group of $M_1$ is isomorphic to the extended mapping class group of its orbit surface. We already have assumed $\phi_1$ preserves the orientations of $M_1$ and fiber, then $\partial \phi_1$ must be isotopic to the identity.

As our assumption, $\widetilde{O_1} / \iota = M_1/\iota_1 \cup_h M_2/\iota_2$ for some $h: \partial (M_1/\iota_1) \rightarrow \partial (M_2/\iota_2)$ satisfying $f: \partial M_1 \rightarrow \partial M_2$ is a lift of $h$. There exists $\phi_i: M_i \rightarrow M_i$ such that $\iota_i=\phi^{-1}_i \sigma_i \phi_i$ with $\partial \phi_i$ isotopic to the identity, for $i=1,2$. Then $\phi_i$ induces a homeomorphism $\bar{\phi_i} :(M_i/\iota_i) \rightarrow (M_i/\sigma_i)$. Restrict $\bar{\phi_i}$ to the boundary, we have the following commutative diagram:

\begin{displaymath}
\xymatrix{
\partial (M_1/\iota_1) \ar[d]^{\partial \bar{\phi_1}} \ar[rr]^{h} && \partial (M_2/\iota_2)  \ar[d]^{\partial \bar{\phi_2}}\\
\partial (M_2/\sigma_2) \ar[rr]^{\partial \bar{\phi_2} \circ h \circ \partial \bar{\phi_1}^{-1}} && \partial (M_2/\sigma_2).}
\end{displaymath}

This induces a homeomorphism:
\[\bar{\phi}=\bar{\phi_1} \cup \bar{\phi_2}: M_1/\iota_1 \cup_h M_2/\iota_2 \rightarrow M_1/\sigma_1 \cup_{\partial \bar{\phi_2} \circ h \circ \partial \bar{\phi_1}^{-1}} M_2/\sigma_2.\]
$\partial \bar{\phi_2} \circ h \circ \partial \bar{\phi_1}^{-1}$ lifts to $\partial \phi_2 \circ f \circ \partial \phi^{-1}_1$ which is isotopic to $f$ since $\partial \phi_i$ is isotopic to the identity. In fact, only the lift of $\bar{f}k$ is isotopic to $f$, where $k \in \langle c_1,c_2 \rangle \in Mod(\partial{Q})$ and $Mod(\partial{Q})$ represents the mapping class group of $\partial Q$ as a four-times-punctured sphere. Thus $\partial \bar{\phi_2} \circ h \circ \partial \bar{\phi_1}^{-1}=\bar{f}k$. Then,
\begin{align*}
O'&=\widetilde{O_1}/\iota=M_1/\iota_1 \cup_h M_2/\iota_2 \\
  &\cong M_1/\sigma_1 \cup_{\bar{f}k} M_2/\sigma_2=Q \cup_{\bar{f}k} P.
\end{align*}

In fact, gluing the tangle $Q$ and $P$ together by the gluing map $\bar{f}k:\partial Q \rightarrow \partial P$ is equivalent to performing mutations on $O_1$ along the dotted Conway sphere $S$ shown as Figure \ref{tangleOb1b}. The tangle in the Conway sphere $S$ in $O_1$ is a Montesinos tangle which is invariant under some rotations in $\langle c_1,c_2 \rangle$. Thus we only obtain two tangles by mutations, $O_1$ itself and $O_2$ shown in Figure \ref{thetangleO2}. We can easily show that $O_2$ is homeomorphic to $O_1$, by extending the operation $c_1 \in Mod(\partial O_1)$ to the whole $O_1$. Thus $O' \cong O_1$.

\begin{figure}[H]
\centering
\subfigure[The tangle $O_1$]
{\label{tangleOb1b}\includegraphics[width=0.25\textwidth]{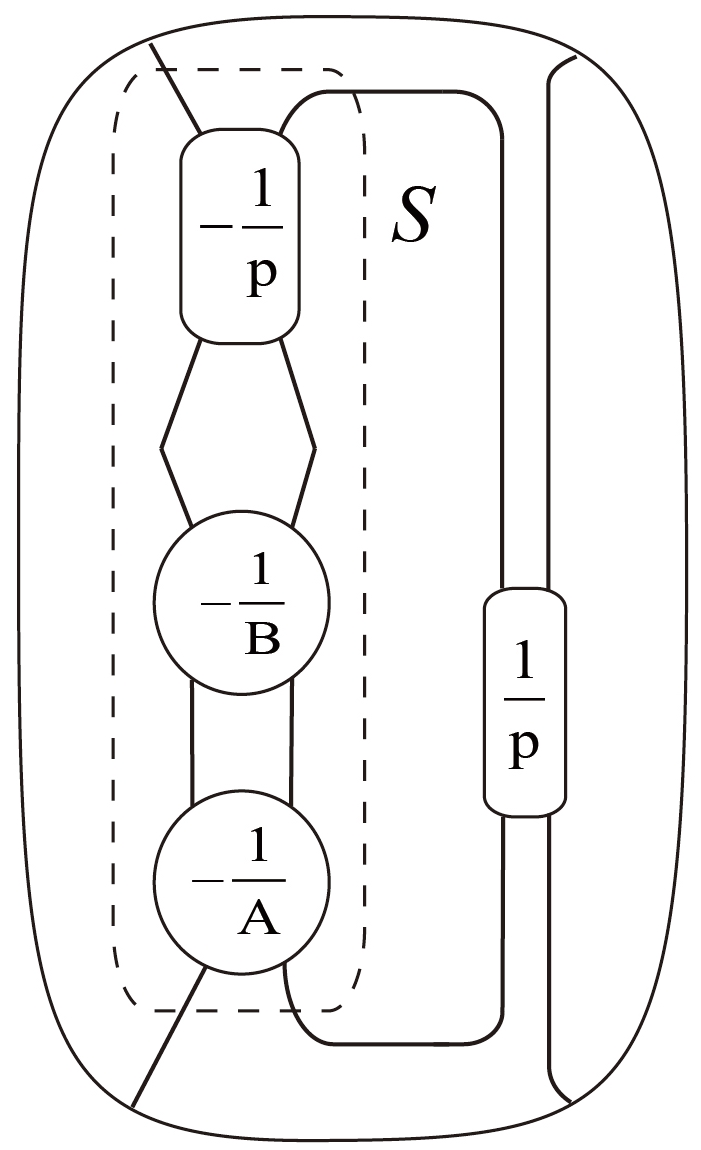}}\quad\quad\quad
\subfigure[The tangle $O_2$]
{\label{thetangleO2}\includegraphics[width=0.248\textwidth]{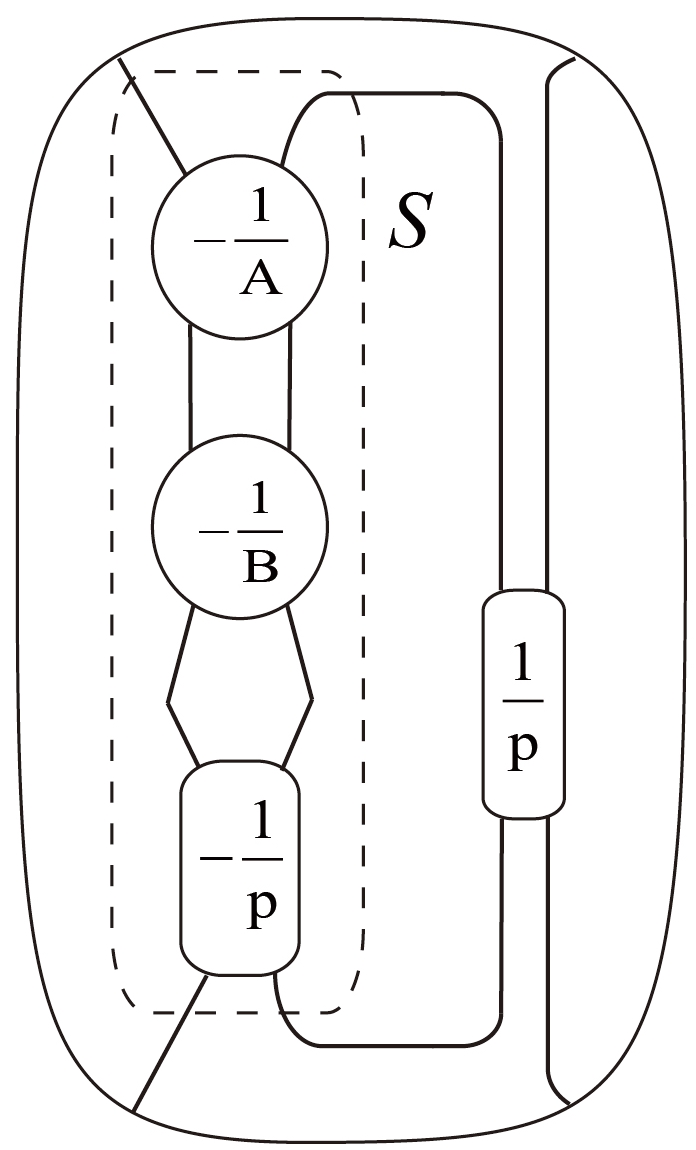}}
\subfigure
{\includegraphics[width=0.06\textwidth]{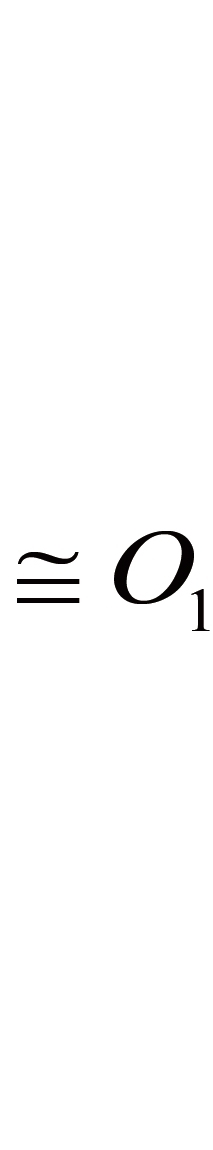}}
\caption{The tangle $O_1$ and $O_2$}\label{O1}
\end{figure}

All the homeomorphisms on $O_1$ can be induced by homeomorphisms on boundary of $O_1$. These homeomorphisms on $\partial O_1$ also induce a new pair of slants corresponding to the new pair of solutions $(X_1,X_2)$ for the new tangle obtained. This pair of solutions $(X_1,X_2)$ is unique, for otherwise there exist other Dehn fillings giving the manifolds we want. Actually, we only choose orientation-preserving homeomorphisms. Besides, we only want the pair of solutions with $X_1=0$ as mentioned at the beginning of this section, thus the homeomorphisms we can perform on $\partial O_1$ are limited and can be easily worked out. In fact, only $\bar{\beta}^n k \, \in Mod(\partial O_1)$ could preserve the meridian of $X_1=0$-tangle, where $k\, \in \langle c_1,c_2 \rangle$, and $n \in \mathbb{Z}$. Therefore, all the solutions up to equivalence are
\begin{align*}
O&=k(O_1) \circ (n,0)\\
X_1&=0-tangle, \\
X_2&=\infty \circ (-n,0)-tangle,
\end{align*}
where $k(\ast)$ means extending the operation $k$ on the boundary of the tangle $\ast$ to the whole $\ast$. In \cite{Darcy}, it is shown that the solutions $(O \circ (n,0), X_1=0, X_2=\infty \circ (-n,0))$ is equivalent to $(O, X_1=0, X_2=\infty)$. Thus $(O=k(O_1), X_1=0, X_2=\infty)$ give all the solutions, up to equivalence, as shown in the case (\romannumeral1) of this theorem. Obviously, $N(k(O_1)+0)=b(a,b)$ and $N(k(O_1)+\infty)=b(p,q) \#  b(a+ap_1q_1p-bp_1^2p, b+aq_1^2p-bp_1q_1p)$.

Case (\romannumeral2): $q=pp_1q_1-1$.

This case is similar to the case (\romannumeral1). Split $\widetilde{O}$ along the only one essential torus $T$, we obtain two pieces $M=L(a,b)\setminus N(K')$ and $C_{p,q}$. $M$ is still the Seifert fiber space $M(0,1;(p_1,e),(aq_1-bp_1,ad-be))$, which is the double branched cover of the tangle $Q$ in Figure \ref{tangleQ}. $C_{p,q}=M(0,2;(p,-1))$ is the double branched cover of the tangle $P'$ shown in Figure \ref{tanglePpri}. Then we construct a tangle $O'_1$ whose double branched cover is $\widetilde{O}$, by gluing $P'$ and $Q$ together. The pair of solutions $(X_1,X_2)$ when $O=O'_1$ is given in Figure \ref{tangleO'1X1X2}, similarly by studying the surgeries on $\widetilde{O}$ and $C_{p,q}$. Besides, when $O=O'_1$ the pair of solutions $(X_1,X_2)$ is unique. The same method as in case (\romannumeral1) \, can be used to prove any tangle whose double branched cover is $\widetilde{O}$ is homeomorphic to $O'_1$. Similarly, $(O=k(O'_1),X_1=0,X_2=\infty)$ give all the solutions up to equivalence as shown in the case (\romannumeral2) of this theorem. One can check that $N(k(O'_1)+0)=b(a,b)$ and $N(k(O'_1)+\infty)=b(p,q) \#  b(a-ap_1q_1p+bp_1^2p, b-aq_1^2p+bp_1q_1p)$.

\begin{figure}[H]
\centering
\subfigure[The tangle $P'$]
{\label{tanglePpri}\includegraphics[width=0.26\textwidth]{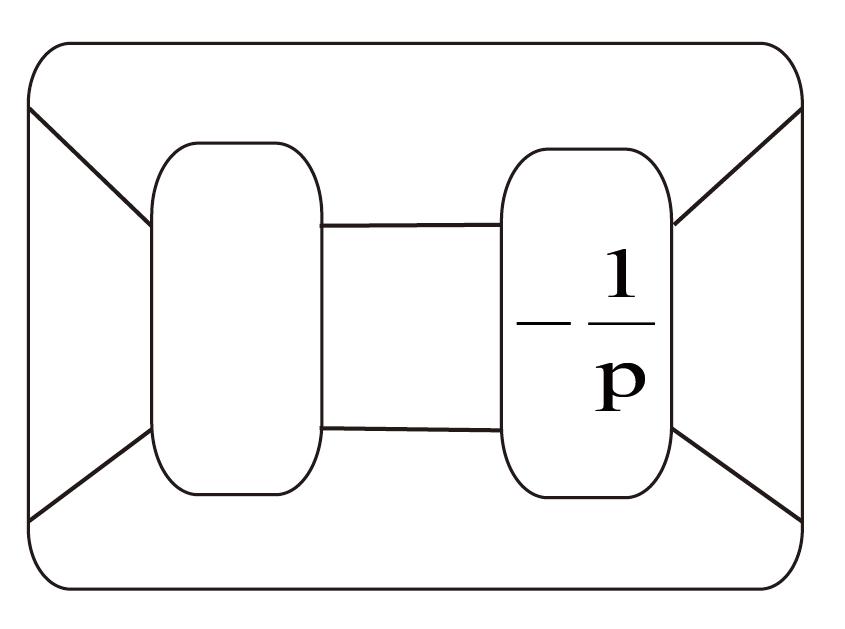}}\quad\quad
\subfigure[The tangle $O'_1$]
{\label{tangleOpri1}\includegraphics[width=0.25\textwidth]{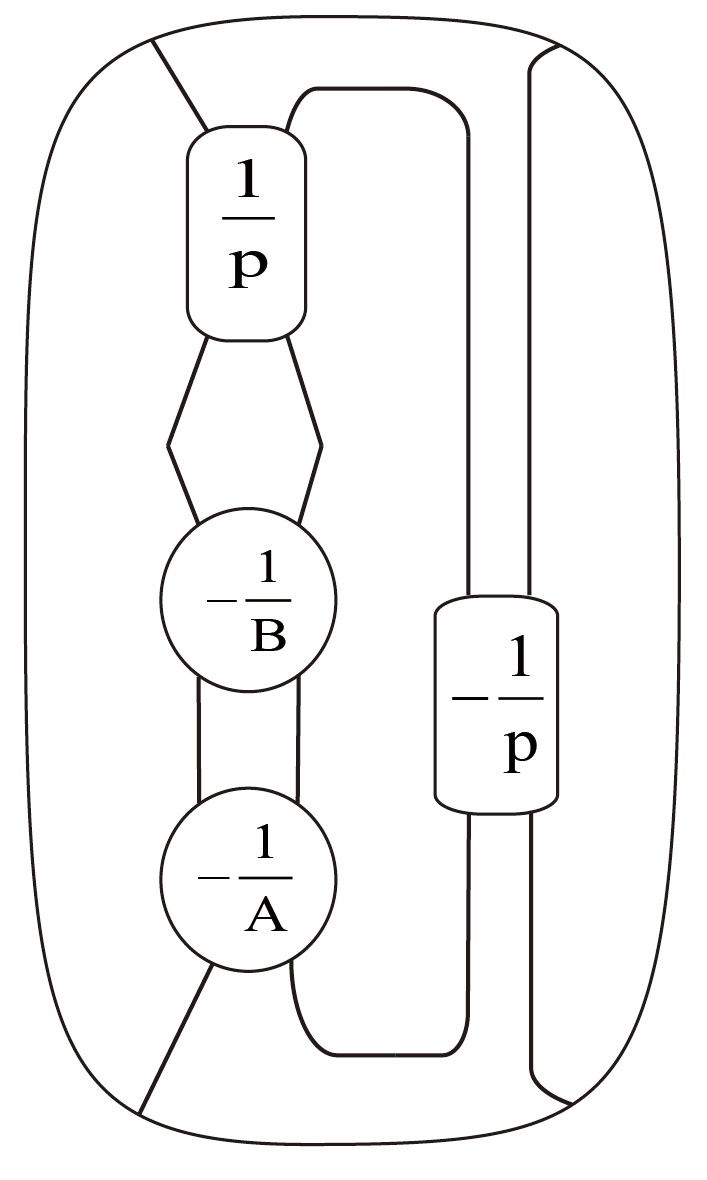}}\quad\quad
\subfigure
{\includegraphics[width=0.18\textwidth]{solution2.jpg}}
\caption{The tangle $O'_1$ and the corresponding pair of solutions $(X_1,X_2)$, where $A=\frac{-e}{p_1}$, $B=\frac{ad-be}{aq_1-bp_1}$, and $p_1d-q_1e=1$.}\label{tangleO'1X1X2}
\end{figure}

\end{proof}

\begin{rem}
In \cite{Gordon1}, Gordon proved that any tangle whose double branched cover is homeomorphic to that of EM-tangle is homeomorphic to the EM-tangle by a similar method, where EM-tangle has a similar structure as our tangle $O_1$. Besides, Paoluzzi's method in \cite{Paoluzzi} can be used to prove that there are at most 4 tangles whose double branched cover are $\widetilde{O}$ and the 4 tangles are obtained by mutations of the tangle $O_1$.
\end{rem}

\subsubsection{$\widetilde{O}$ is a Seifert fiber space}

Buck and Mauricio's paper \cite{Buck} also includes this case, while it assumes that neither of $b_2$ and $b_3$ is $b(0,1)$ (i.e.the unlink). Besides, our definition of tangle is different from \cite{Buck}, since we allow tangle to have circles embedded in. Here we will give a theorem without such an assumption.

\begin{thm1}
\label{mainthm2}
Suppose
\begin{align*}
N(O+X_1)&=b_1          \\
N(O+X_2)&=b_2 \# b_3,
\end{align*}
where $X_1$ and $X_2$ are rational tangles, and $b_i$ is a 2-bridge link, for $i=1,2,3$, with $b_2$ and $b_3$ nontrivial. Suppose $\widetilde{O}$ is a Seifert fiber space, then the system of tangle equations has solutions if and only if one of the following holds:

(\romannumeral1) There exist 2 pairs of relatively prime integers $(a,b)$ and $(p,q)$ satisfying $0<\frac{b}{a}\leq 1 (or \,\, \frac{b}{a}=\frac{1}{0}=\infty), p>1$, and $|aq-bp|>1$ such that $b_1=b(a,b)$ and $b_2 \# b_3=b(p,-e) \# b(aq-bp, ad-be)$, where $pd-qe=1$ (Note that choosing different $d$ and $e$ such that $pd-qe=1$ has no effect on results). Solutions up to equivalence are shown as the following:

\begin{figure}[H]
\centering
\subfigure[]{\label{SFSsolution1}
\includegraphics[width=0.4\textwidth]{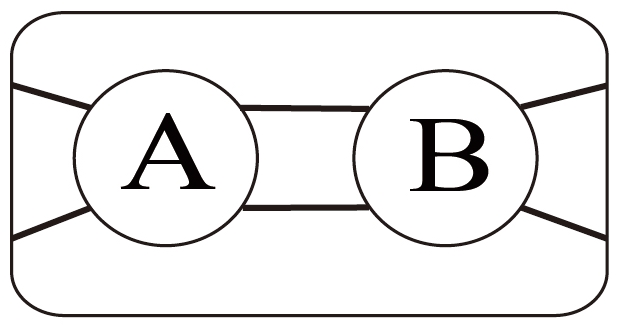}}\qquad\qquad
\subfigure{\label{SFSsolutionX}
\includegraphics[width=0.18\textwidth]{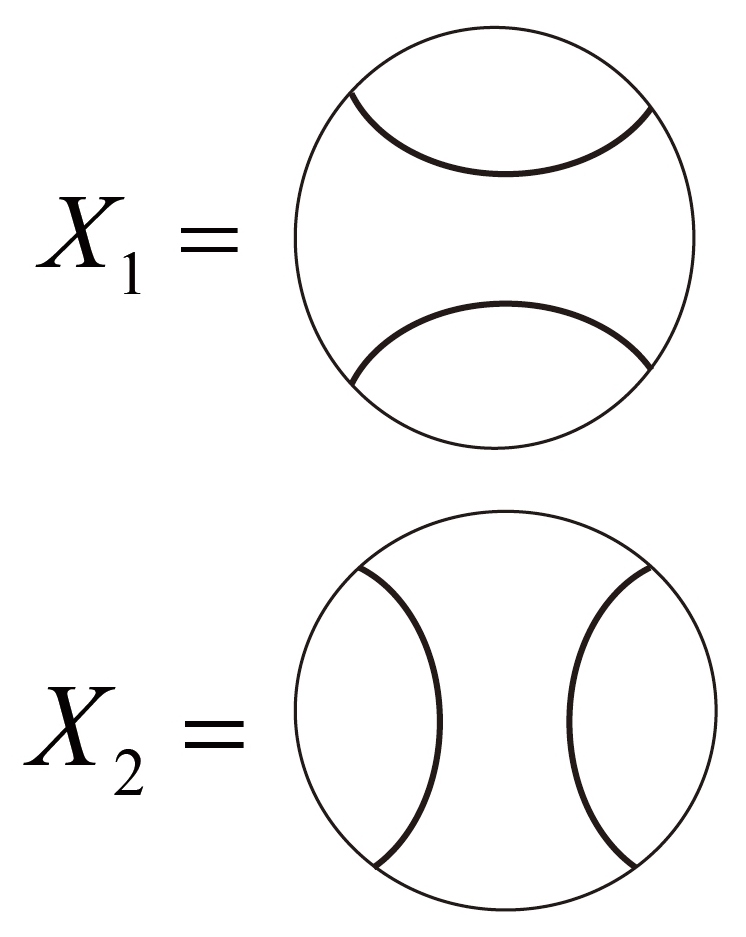}}
\caption{$O$ = the tangle in (a), where $A=\frac{-e}{p}$, $B=\frac{ad-be}{aq-bp}$ (or $A=\frac{ad-be}{aq-bp}$, $B=\frac{-e}{p}$). $X_1=0$-tangle and $X_2=\infty$-tangle.}\label{SFSsolution12}
\end{figure}

(\romannumeral2)There exists an integer $p$ satisfying $|p|>1$ such that $b_1=b(4p, 1-2p)$ and $b_2 \# b_3=b(0,1) \# b(p,1)$. Solutions up to equivalence are shown as the following:

\begin{figure}[H]
\centering
\subfigure[]{\label{SFSsolution2}
\includegraphics[width=0.4\textwidth]{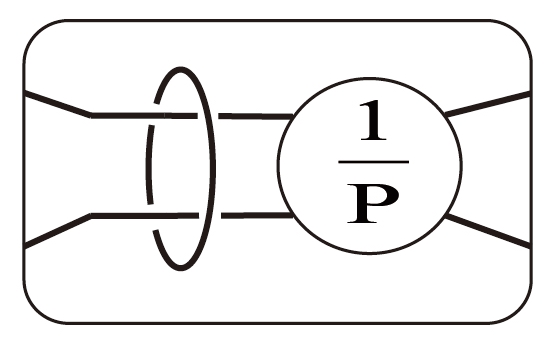}}\qquad\qquad
\subfigure{\label{SFSsolutionX}
\includegraphics[width=0.18\textwidth]{SFSsolutionX.jpg}}
\caption{$O=R + \frac{1}{p}$ the tangle shown in (a), where $R$ is the ring tangle. $X_1=0$-tangle and $X_2=\infty$-tangle.}\label{SFSsolution21}
\end{figure}

\end{thm1}

\begin{proof}
Suppose that $b_1=b(a,b)$ for a pair of relatively prime integers $(a,b)$ satisfying $0<\frac{b}{a}\leq 1 (or \,\, \frac{b}{a}=\frac{1}{0}=\infty)$, then $\widetilde{b_1}=L(a,b)$. $\widetilde{X_1}$ is a solid torus lying in $L(a,b)$, and we regard it as a tubular neighborhood of a knot $K$ in $L(a,b)$. Since we assume that $\widetilde{O}=L(a,b)\setminus N(K)$ is a Seifert fiber space, $K$ is isotopic to a fiber in some generalized Seifert fibration of $L(a,b)$. According to Lemma \ref{Lens space SFS1}, there are two types of fibration for a lens space.

(\romannumeral1)$L(a,b)$ is fibered over $S^2$.

In fact, $K$ is isotopic to an ordinary fiber of this type of fibration, since otherwise $L(a,b)\setminus N(K)=\widetilde{O}$ is a solid torus, but there dose not exist rational tangle solution for $O$. Let $L(a,b)=T_1 \cup T_2$ where $T_i$ is a solid torus, for $i=1,2$. $K$ can be regarded as a $(p,q)$-torus knot in one of the solid tori of $L(a,b)$, without loss of generality $T_1$. In fact, $p\neq 0$ because otherwise either $\widetilde{O}=L(a,b)\setminus N(K)$ is reducible which contradicts our assumption, or $L(a,b)=S^3$. $L(a,b)=S^3$ is also impossible since if it is and $p=0$, then $L(a,b)\setminus N(K)=$solid torus. Besides, $p \neq 1$ for otherwise $L(a,b)\setminus N(K)=$solid torus. Thus $p>1$.  By Corollary \ref{Lens space SFS2},
\[L(a,b)=M(0,0;(p,-e), (aq-bp,ad-be)),\]
where $pd-qe=1$. $|aq-bp|>1$ for the same reason as $p>1$ since a $(p,q)$-torus knot in $T_1$ of $L(a,b)$ is also a torus knot with winding number $|aq-bp|$ in $T_2$ of $L(a,b)$. Since $K$ is isotopy to an ordinary fiber,
\[\widetilde{O}=L(a,b)\setminus N(K)=M(0,1;(p,-e), (aq-bp,ad-be)).\]
We choose a longitude-meridian basis for $K$ using the same principle as choosing basis for $J$ in $S^1 \times D^2$. According to Lemma \ref{Lens space SFS3}, only doing $\infty$-surgery (resp.$pq$-surgery) along $K$ produces $L(a,b)$ (resp.\,a non-prime manifold), and $\infty$-surgery (resp.$pq$-surgery) is equivalent to filling a $(1,0)$-fiber (resp.$(0,1)$-fiber) in $M(0,1;(p,-e), (aq-bp,ad-be))$. That is,
\[K(\infty)=M(0,0;(p,-e), (aq-bp,ad-be), (1,0))=L(a,b)\]
\[(resp. K(pq)=M(0,0;(p,-e), (aq-bp,ad-be), (0,1))=L(p,-e) \# L(aq-bp,ad-be)).\]

According to the results about double branched covers listed in Section \ref{section of Double branched covers}, the tangle $O_1$ shown in Figure \ref{tangleO14SFS} satisfies that its double branched cover is $\widetilde{O}$. Let $v$ denote the associated standard involution on $\widetilde{O}$, then $\widetilde{O}/v=O_1$. As our analysis about surgeries above, filling along two slopes $\frac{0}{1}$ and $\frac{1}{0}$ (i.e.filling $(1,0)$-fiber and $(0,1)$-fiber in $M(0,1;(p,-e), (aq-bp,ad-be))$) produce the manifolds we want. The two Dehn filling slopes are mapped to $\frac{0}{1}$ and $\frac{1}{0}$ slants respectively on $\partial O_1$ by the covering map induced by $v$. It means adding the tangle $X_1=0$-tangle and $X_2=\infty$-tangle give the 2-bridge link and the connected sum of two 2-bridge links we want. One can check that $N(O_1+0)=b(a,b)$ and $N(O_1+\infty)=b(p,-e) \# b(aq-bp, ad-be)$. There is no other slant satisfying this, since there is no other surgery slope along $K$ giving the original $L(a,b)$ or a non-prime manifold.

\begin{figure}[H]
\centering
\subfigure[The tangle $O_1$]{\label{tangleO14SFS}
\includegraphics[width=0.35\textwidth]{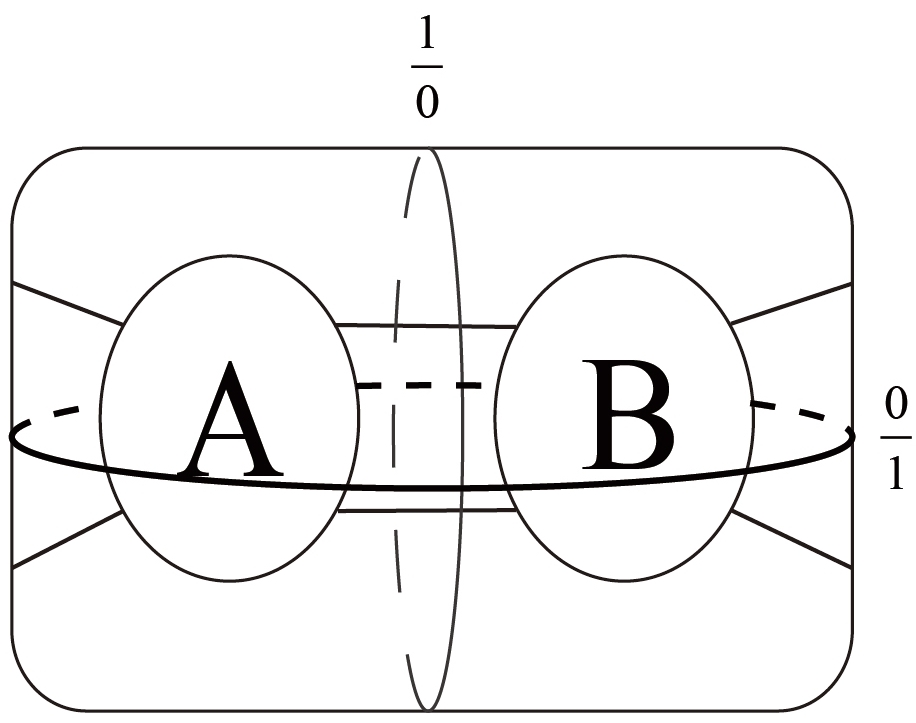}}\qquad\qquad
\subfigure{
\includegraphics[width=0.16\textwidth]{SFSsolutionX.jpg}}
\caption{The tangle $O_1=A+B$ and the corresponding pair of solutions $(X_1,X_2)$,  where $A=\frac{-e}{p}$, $B=\frac{ad-be}{aq-bp}$ and $pd-qe=1$,}\label{SFSsolution12}
\end{figure}

By Proposition 2.8 in \cite{Paoluzzi}, there is only one involution on $\widetilde{O}$ (i.e.$v$), up to conjugation, satisfying $\widetilde{O} / (the \, involution)$ is a tangle. Therefore, any tangle whose double branched cover is $\widetilde{O}$ is homeomorphic to the tangle $O_1$. Since we expect $X_1=0$-tangle,
\[O=k(O_1), \, X_1=0, \, X_2=\infty,\]
where $k \in \langle c_1,c_2 \rangle \in Mod(\partial O_1)$, give all the solutions up to equivalence, like analysis in Theorem \ref{mainthm}.

Obviously, $N(k(O_1)+0)=b(a,b)$ and $N(k(O_1)+\infty)=b(p,-e) \# b(aq-bp, ad-be)$. We know that performing $pq$-surgery gives a $L(p,q)$ summand. Actually, $b(p,-e)=b(p,q)$ since $pd-qe=1$.

(\romannumeral2)$L(a,b)$ is fibered over $\mathbb{R}P^2$.

Let $L(a,b)=T_1 \cup_g (S^1 \widetilde{\times}  M\ddot{o}bius \,\, band)$ where $T_1$ is a solid torus. If $K$ is isotopic to an exceptional fiber, then $L(a,b)\setminus N(K)$ has no exceptional fiber, so we can choose another fibration of $L(a,b)$ over $S^2$ such that $K$ is isotopic to an ordinary fiber of the new fibration, which has been discussed in the previous case. Therefore we assume that $K$ is isotopic to an ordinary fiber. We can regard $K$ as a $(p,q)$-torus knot lying in $T_1$ with $|p|>1$. Because if $|p|=0$, then $K$ is a knot in a ball, thus $L(a,b)\setminus N(K)$ is reducible; if $|p|=1$, then we can also refiber $L(a,b)$ over $S^2$ such that $K$ is isotopic to an ordinary fiber. Here we choose a longitude-meridian basis for $K$ by using the same principle as choosing basis for $J$ in $S^1 \times D^2$. By Corollary \ref{Lens space SFS2} and Lemma \ref{Lens space SFS1},
\[L(a,b)=M(-1,0;(p,1))=L(4p,1-2p) \,\, and \,\, q \cong 1 \,\, mod \, p.\]
In fact, $M(-1,0;(p,1)) \cong M(-1,0;(p,-1))$ with different orientation. Since $K$ is isotopic to an ordinary fiber, then
\[\widetilde{O}=L(a,b)\setminus N(K)= M(-1,1;(p,1)).\]
Using the formula in Lemma \ref{Lens space SFS4}, only doing $\infty$-surgery (resp.$pq$-surgery) along $K$ gives $L(a,b)=L(4p,1-2p)$ (resp.a non-prime manifold), and $\infty$-surgery (resp.$pq$-surgery) is equivalent to filling a $(1,0)$-fiber (resp.$(0,1)$-fiber) in this given fibration of $L(a,b)$. That is,
\[K(\infty)=M(-1,0;(p,1), (1,0))=L(4p,1-2p)\]
\[(resp. K(pq)=M(-1,0;(p,1),(0,1))=S^1 \times S^2 \#L(p,1)).\]

As shown in Section \ref{section of Double branched covers}, the double branched cover of the tangle $O_2$ shown in Figure \ref{tangleO24RP} is $\widetilde{O}$. Fillings along the two slopes $\frac{0}{1}$ and $\frac{1}{0}$ on the boundary of $M(-1,1;(p,1))$ (i.e.filling $(1,0)$- fiber and $(0,1)$-fiber in $M(-1,1,(p,1))$ respectively) give the manifolds we want as discussion above. The two Dehn filling slopes are mapped to two slants $\frac{0}{1}$ and $\frac{1}{0}$ respectively on $\partial O_2$ by the covering map, which give the pair of solutions $(X_1=0, X_2=\infty)$ when $O=O_2$. Besides, when $O=O_2$ the pair of solutions is unique by the analysis about surgeries above. One can easily check that $N(O_2+0)=b(4p,1-2p)$ and $N(O_2+\infty)=b(0,1) \# b(p,1)$.

\begin{figure}[H]
\centering
\subfigure[the tangle $O_2$]{\label{tangleO24RP}
\includegraphics[width=0.38\textwidth]{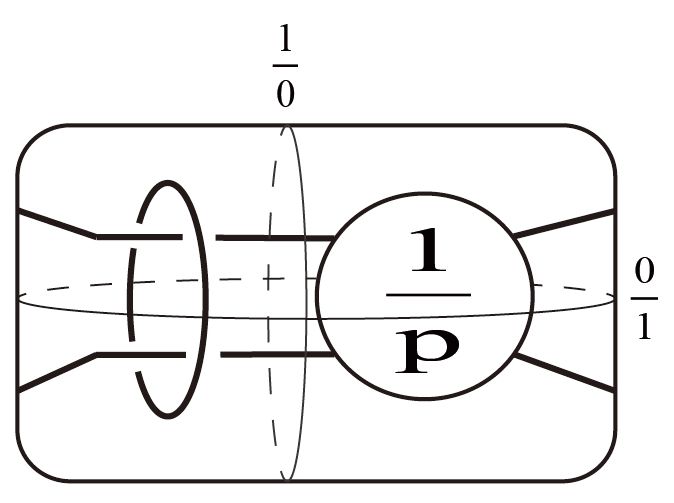}}\qquad\qquad
\subfigure{
\includegraphics[width=0.16\textwidth]{SFSsolutionX.jpg}}
\caption{The tangle $O_2$ and the corresponding pair of solutions $(X_1,X_2)$.}
\end{figure}

\begin{figure}[H]
\centering
\subfigure[The tangle $O_2$]{\label{tangleO2Sphere}
\includegraphics[width=0.28\textwidth]{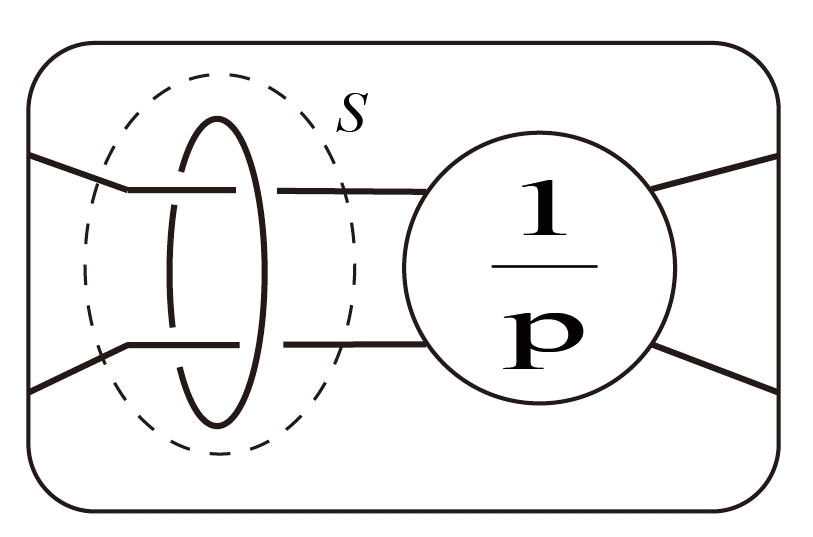}}\qquad
\subfigure[The tangle $U$]{\label{tangleU}
\includegraphics[width=0.20\textwidth]{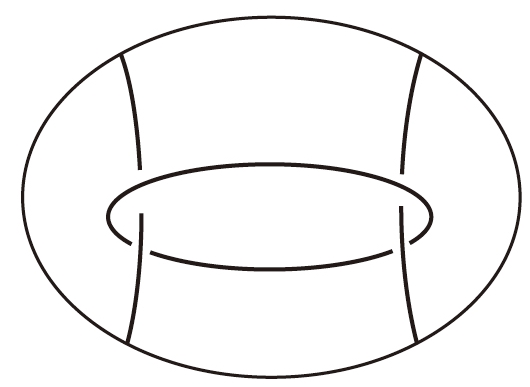}}\qquad
\subfigure[The Montesinos pair $M=(0,2; \varnothing, \frac{1}{p})$]{\label{tanglePN}
\includegraphics[width=0.28\textwidth]{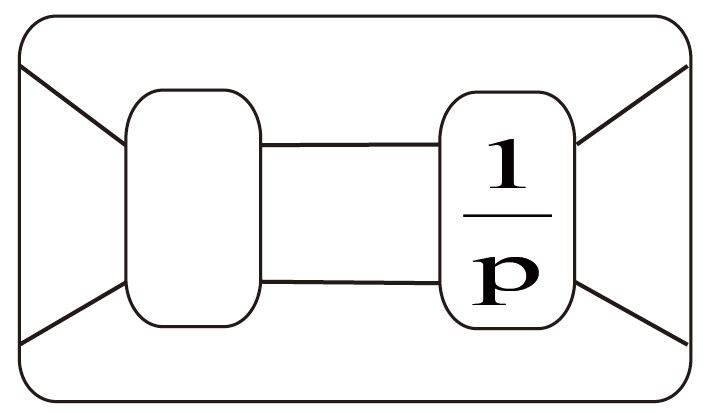}}
\caption{Splitting the tangle $O_2$ as $U \cup M$}\label{O2UM}
\end{figure}

Now we show that any tangle whose double branched cover is $\widetilde{O}$ is homeomorphic to $O_2$. Split the tangle $O_2$ into two tangles. One is the tangle, denoted by $U$, in the dotted Conway sphere $S$ in $O_2$ shown in Figure \ref{O2UM}, which is actually the ring tangle. Another one is the tangle outside $U$ in $O_2$, denoted by $M$, shown in Figure \ref{tanglePN}, which is a Montesinos pair in $S^2 \times I$. According to Section \ref{section of Double branched covers}, $\widetilde{U}=M(-1,1;) \cong M(0,1; (2,1),(2,-1))$, denoted by $M_1$, which can be regarded as a Seifert fiber space over a disk with two exceptional fibers, and $\widetilde{M}=M(0,2;(p,1))$, denoted by $M_2$, which is a Seifert fiber space over an annulus with one exceptional fiber. Then $\widetilde{O_2}=\widetilde{O}=M_1 \cup M_2$. The lift of the Conway sphere $S$, i.e.$M_1 \cap M_2$, is the only one essential torus in $\widetilde{O}$ up to isotopy. This is very similar to the situation in Theorem \ref{mainthm}. Therefore we can use the same method to show that any tangle whose double branched cover is $\widetilde{O}$ is homeomorphic to $O_2$ or a mutant of $O_2$ along the Conway sphere $S$. The tangle $U$ is so special such that $U=k(U)$ for any $k \in \langle c_1,c_2 \rangle \in Mod(\partial U)$, namely any mutant of $O_2$ is homeomorphic to $O_2$. Then any tangle whose double branched cover is $\widetilde{O}$ is homeomorphic to the tangle $O_2$. In addition, $O_2$ is invariant under any homeomorphism extended by $k \in \langle c_1,c_2 \rangle \in Mod(\partial O_2)$, then
\[O=O_2, \, X_1=0, \, X_2=\infty\]
give all the solutions up to equivalence.

\end{proof}

\section{Some other tangle equations}
\label{Some other tangle equations}

We can also solve the following system of tangle equations:
\begin{align}
N(U+X_1)&=b_1          \label{eq5}\\
N(U+X_2)&=b_2,         \label{eq6}
\end{align}
where $X_1$ and $X_2$ are rational tangles, $U$ is an algebraic tangle but not a generalized Montesinos tangle, $b_1$ and $b_2$ are 2-bridge links with $b_1 \neq b_2$.

In fact, the system of tangle equations has been discussed in many papers, like \cite{DarcySumners} and \cite{Darcy}. But they solve the equations under the assumption that $U$ is a generalized Montesinos tangle or $d(X_1,X_2)>1$ (if $d(X_1,X_2)>1$, then we have that $\widetilde{U}$ is a Seifert fiber space by Cyclic Surgery Theorem \cite{Culler}. It can be shown that $\widetilde{U}$ is a Seifert fiber space over a disk, thus $U$ is a generalized Montesinos tangle).

Also lifting to the double branched covers, the system of tangle equations is translated to
\begin{align}
&\widetilde{U}(\alpha)= the \,\, lens \,\, space \quad \widetilde{b_1}                                                        \label{eq7}\\
&\widetilde{U}(\beta)= the \,\, lens \,\, space \quad \widetilde{b_2},                                                          \label{eq8}
\end{align}

where $\widetilde{U}$ (resp.$\widetilde{b_i}$) denotes the double branched cover of $U$(resp.$b_i$) and $\alpha$ (resp.$\beta$) is the induced Dehn filling slope by adding rational tangle $X_1$(resp.$X_2$). Therefore, the problem turns out to be finding knots in the lens space $\widetilde{b_1}$ which admits a surgery to another lens space.

According to Section \ref{section of Double branched covers}, the double branched cover of an algebraic tangle is a graph manifold. Obviously the algebraic tangle $U$ is locally unknotted, since $b_1 \neq b_2$, both of which are prime. Also it is impossible for $U$ to contain a splittable unknot. Therefore, $\widetilde{U}$ is an irreducible graph manifold, but not a Seifert fiber space since $U$ is not a generalized Montesinos tangle.

Now we split $\widetilde{U}$ along its incompressible tori to study $\widetilde{U}$ and this is the idea from Buck and Mauricio \cite{Buck}. In fact, that all the tori in $\widetilde{U}$ are separating. Because if not, a non-separating torus is still non-separating after Dehn filling, which contradicts the fact that there is no non-separating torus in a lens space. Let $T$ be a collection of disjoint non-parallel incompressible tori such that each component of $\widetilde{U}|T$ is atoroidal. Here $\widetilde{U}$ is an irreducible graph manifold. After cutting it along $T$, we only have atoroidal Seifert fiber spaces (i.e.small Seifert fiber spaces) left.

\begin{defn}
A {\it splitting graph of $\widetilde{U}$ along $T$} is a graph $G$ which uses edges to represent the incompressible tori in $T$ and use vertices to represent the connected components of this decomposition. An edge connects two vertices if and only if the incompressible torus corresponding to the edge separates the two components corresponding to these two vertices.
\end{defn}

In fact, the splitting graph of $\widetilde{U}$ along $T$ is a tree, since all the tori in $T$ are separating. Choose the vertex whose corresponding component contains $\partial \widetilde{U}$ to be the root of this graph, and denote it by $v_0$. We define {\it the level of a vertex} to be the minimum number of edges of a path which connects this vertex and the root, and {\it the level of an edge} is defined to be the same as the level of the adjacent vertex which is closer to the root. Then we have the splitting graph of $\widetilde{U}$ is like the following:

\begin{figure}[H]
\centering
\includegraphics[width=0.65\textwidth]{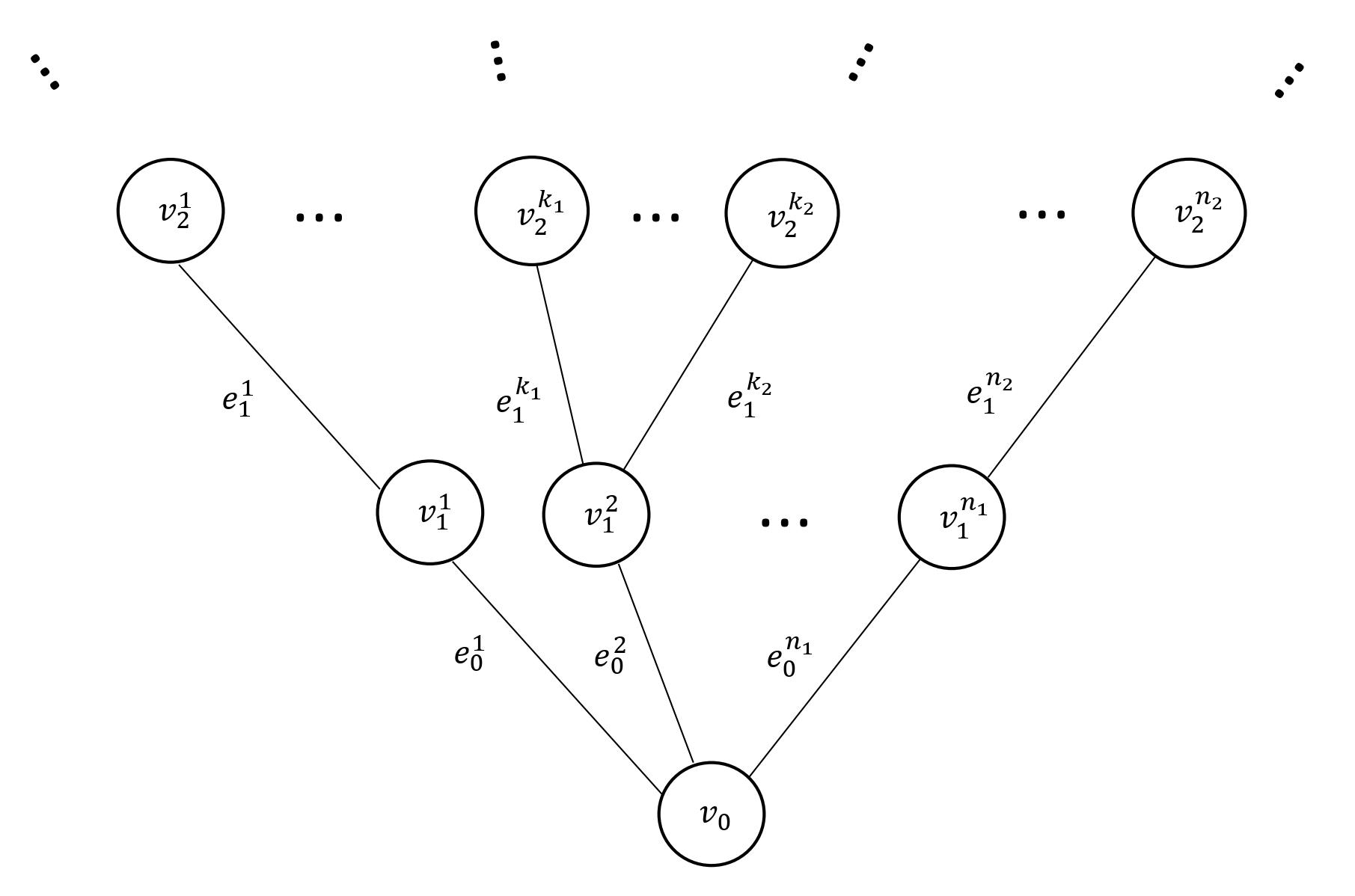}
\caption{\label{tree1} The splitting graph of $\widetilde{U}$ along $T$}
\end{figure}

We can verify the following lemma by almost the same argument as Proposition 5.8 in \cite{Buck} and the formula of Lemma \ref{Gordon2}.

\begin{lem}
\label{v_0}
$v_0$ is a Seifert fiber space over an annulus with exact one exceptional fiber. $v_0(\alpha)$ is a solid torus.
\end{lem}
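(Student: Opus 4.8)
The plan is to mimic the argument of Proposition 5.8 in \cite{Buck}, invoking Lemma \ref{Gordon2} exactly at the points where one must know how a Seifert piece behaves under Dehn filling. Recall the set-up just established: $\widetilde U$ is an irreducible graph manifold with a single torus boundary component and is not Seifert fibered, so the splitting collection $T$ is nonempty, the splitting graph is a tree, and the root component $v_0$ (the one containing $\partial\widetilde U$) is a small, i.e.\ atoroidal, Seifert fiber space. Since the tree has at least two vertices and $v_0$ is its root, $v_0$ is incident to at least one edge, hence has at least one boundary torus besides $\partial\widetilde U$; call one such torus $T_1$. The external input is that $\widetilde U(\alpha)=\widetilde{b_1}$ and $\widetilde U(\beta)=\widetilde{b_2}$ are lens spaces (possibly $S^3$ or $S^2\times S^1$) with $\widetilde{b_1}\neq\widetilde{b_2}$, so $\alpha\neq\beta$; I will also use the standard fact that every embedded torus in an irreducible lens space (and in $S^3$) bounds a solid torus on at least one side.

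First I would analyse the $\alpha$-filling. Write $\widetilde U=V_{out}\cup_{T_1}W_1$, where $W_1$ is the graph submanifold of $\widetilde U$ lying on the far side of $T_1$ from $\partial\widetilde U$, and $V_{out}$ is the remaining piece, so $V_{out}\supseteq v_0\ni\partial\widetilde U$. Because $T_1$ is essential in $\widetilde U$ it is incompressible in both pieces; in particular $W_1$ is irreducible with incompressible boundary, hence is \emph{not} a solid torus, and it is untouched by the $\alpha$-filling. If $\widetilde U(\alpha)$ is irreducible, then it is atoroidal, so $T_1$ is compressible in it and bounds a solid torus on one side; that side cannot be $W_1$, so $V_{out}(\alpha)$ is a solid torus. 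The reducible possibility $\widetilde U(\alpha)=S^2\times S^1$ I would treat separately: then $v_0(\alpha)$ must be reducible, which happens only along a fiber slope and, by the connected-sum-along-the-fiber phenomenon recorded in Lemma \ref{Gordon2} (the analogue of $C_{p,q}(pq)=S^1\times D^2\#L(p,q)$), contributes nontrivial lens-space summands coming from the exceptional fibers of $v_0$, contradicting $\widetilde{b_1}=S^2\times S^1$ (the remaining degenerate sub-cases, e.g.\ $v_0$ planar with three boundary circles and no exceptional fiber, are excluded using the distinct second filling $\beta$). Thus $V_{out}(\alpha)$ is a solid torus, and it contains every further internal boundary torus $T_j$ of $v_0$; applying the same dichotomy inside this solid torus (again using Lemma \ref{Gordon2} to rule out the fiber-slope and multiple-exceptional-fiber configurations) forces each such $W_j$ to be a solid torus, a contradiction. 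Hence $v_0$ has exactly two boundary tori, $\partial\widetilde U$ and $T_1$, and $v_0(\alpha)=V_{out}(\alpha)$ is a solid torus.

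It remains to identify $v_0$. It is a Seifert fiber space over a compact surface $F$ with two boundary circles, and filling one of its boundary tori along $\alpha$ yields a solid torus. If $F$ is the annulus with no exceptional fiber, then $v_0=T^2\times I$, so $\partial\widetilde U$ is parallel to $T_1$, contradicting the minimality of $T$. If $F$ carries two or more exceptional fibers, then filling one boundary torus produces either a Seifert fiber space over the disk with at least two exceptional fibers (so with incompressible boundary) or, along the fiber slope, a connected sum $S^1\times D^2\#L\#L'\#\cdots$ with at least two lens-space summands (by the fiber-slope case of Lemma \ref{Gordon2}), and in neither case is it a solid torus. A non-orientable orbit surface is excluded because the resulting fundamental group cannot collapse to $\mathbb{Z}$ after a single filling. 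Therefore $F$ is an annulus with exactly one exceptional fiber, i.e.\ $v_0$ is a Seifert fiber space over an annulus with exactly one exceptional fiber, and $v_0(\alpha)$ is a solid torus, as claimed.

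The step I expect to be the real obstacle is making the second paragraph rigorous: peeling off solid-torus neighbourhoods of the internal tori one at a time and, above all, controlling the degenerate cases in which a filling is along a fiber slope or produces a reducible manifold. This is precisely where one must reproduce the case-by-case bookkeeping of Proposition 5.8 in \cite{Buck} together with the surgery formulas of Lemma \ref{Gordon2} (and, where needed, the existence of the second distinct lens-space filling $\beta$), rather than argue by soft $3$-manifold topology alone.
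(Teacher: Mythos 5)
The paper does not actually write out a proof of this lemma; it only asserts that it follows ``by almost the same argument as Proposition 5.8 in \cite{Buck} and the formula of Lemma \ref{Gordon2}.'' Your proposal is therefore a reconstruction of that cited argument, and its overall strategy (the lens-space filling forces the root piece to become a solid torus, then classify the atoroidal Seifert pieces that can do this) is indeed the intended one. However, your key step rests on a false ``standard fact.'' It is \emph{not} true that every embedded torus in an irreducible lens space bounds a solid torus on at least one side: if $a$ is a knotted arc properly embedded in a ball $B\subset L(p,q)$ with $p\geq 2$, the torus $\partial\bigl(\overline{B\setminus N(a)}\bigr)$ has a nontrivial knot exterior on one side and a manifold with fundamental group $\mathbb{Z}/p * \mathbb{Z}$ on the other; neither is a solid torus. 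The correct trichotomy is that a torus in a lens space either bounds a solid torus or is \emph{contained in a ball}, and the second alternative is exactly the case your first paragraph silently discards. It is not a removable technicality: if $v_0$ is a cable space $C_{p,q}$ and $\alpha$ happens to be the cabling slope $pq$, then by Lemma \ref{Gordon2} one has $v_0(\alpha)=(S^1\times D^2)\,\#\,L(p,q)$, which is not a solid torus, and $\widetilde U(\alpha)$ is nevertheless a lens space whenever $W_1$ filled along the induced slope $p\lambda'+q\mu'$ is $S^3$ --- precisely the configuration in which $W_1$ sits inside a ball of $\widetilde U(\alpha)$ as a knot exterior. Excluding this requires real work (e.g.\ Gordon--Luecke/cyclic-surgery input on $W_1$ combined with the second filling $\beta$), and it is the heart of the lemma rather than the ``degenerate bookkeeping'' you defer to \cite{Buck}.

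Two smaller points. First, your second paragraph's peeling argument has the same defect: a torus in a solid torus whose inner side is not a solid torus need not yield a contradiction outright, since the inner side can again be a knot exterior contained in a ball. A cleaner route to ``$v_0$ has exactly two boundary tori'' is to use that $v_0$ is by construction an \emph{atoroidal} Seifert piece: an atoroidal Seifert fibered space with orientable base and at least three boundary components must be $\Sigma_{0,k}\times S^1$ with no exceptional fibers, and a single Dehn filling of such a piece either leaves a Seifert fibered space with incompressible boundary (so $T_1$ would remain incompressible in the lens space $\widetilde U(\alpha)$, a contradiction) or is along the fiber and creates an $S^2\times S^1$ summand. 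Second, the case of an annulus base with two or more exceptional fibers is already excluded by atoroidality of the splitting pieces (such a space contains an essential vertical torus separating the two exceptional fibers), so you do not need the filling argument there at all.
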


\begin{prop}
\label{linear tree}
The splitting graph of $\widetilde{U}$ along $T$ is a linear tree as Figure \ref{lineartree} shown. Each component is a Seifert fiber space over an annulus with exact one exceptional fiber, except the component at the $n$th level which is a Seifert fiber space over a disk with exact two exceptional fibers, and $n \geq 1$.
\end{prop}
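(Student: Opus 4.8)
The plan is to induct on the level, using at each stage the same two ingredients that powered Lemma \ref{v_0} and Proposition \ref{Prop1}: the Cyclic Surgery Theorem together with the classification of Seifert-fibered Dehn fillings of a small Seifert fiber space, and Gordon's Lemma \ref{Gordon2} applied inside each Seifert piece whose orbit surface is an annulus. By Lemma \ref{v_0} the root $v_0$ is Seifert fibered over an annulus with exactly one exceptional fiber, and $v_0(\alpha)$ is a solid torus; I would also record the parallel fact that $v_0(\beta)$ is a solid torus (the argument of Lemma \ref{v_0} applies verbatim with $\beta$ in place of $\alpha$, using $b_2 \neq b(0,1)$). The key point is that after filling $\partial\widetilde U$ with either slope, the whole graph manifold becomes a lens space, hence \emph{every} torus of $T$ must become compressible or boundary-parallel after the filling; since a torus bounding a solid torus on its ``outer'' side stays essential, each torus of $T$ must become compressible from the side \emph{away} from the root. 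This forces, inductively, that the component at each level meets the next level in at most one torus, i.e. the splitting tree is linear.

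More precisely, suppose the components $v_0, v_1, \dots, v_{i-1}$ along the spine have been shown to be Seifert fibered over an annulus with exactly one exceptional fiber, and that filling $\widetilde U$ along $\alpha$ (resp. $\beta$) successively compresses the tori $T_0, \dots, T_{i-1}$, so that the union $v_0\cup\cdots\cup v_{i-1}$ filled along $\alpha$ (resp. $\beta$) is a solid torus whose meridian slope on $T_{i-1}$ I can compute by iterating Lemma \ref{Gordon2}: at each step the gluing realizes $v_j$ as $S^1\times D^2$ with a knot removed, and the relevant fillings are exactly the two exceptional cases $m = npq\pm 1$ of Lemma \ref{Gordon2} (the case $r = pq$ would create an $L(p,q)$ summand, contradicting that the final manifold is a lens space). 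Then $v_i$ is a small Seifert fiber space admitting two fillings along $T_{i-1}$ — the induced $\alpha$- and $\beta$-meridians — each producing, after further filling of the remaining boundary tori, a lens space; the two filling slopes have distance $>1$ because $b_1\neq b_2$ forces $\widetilde b_1\neq \widetilde b_2$ and hence distinct lens-space fillings, so by the Cyclic Surgery Theorem $v_i$ is Seifert fibered. Lemma \ref{Lens space SFS1}, exactly as in the proof of Proposition \ref{Prop1}, then pins down $v_i$: if it is not the terminal vertex it has a free boundary torus, so it is fibered over an annulus or a disk; the Dehn-filling analysis (the algebra with $b+ab_1 = \pm 1$ carried out in Proposition \ref{Prop1}) rules out a disk base with two exceptional fibers unless $v_i$ is terminal, rules out a Möbius-band base, and shows $v_i$ has exactly one exceptional fiber; and it can meet level $i+1$ in only one torus, since two essential boundary tori on a small Seifert fiber space over an annulus would survive the filling. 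This closes the induction: the tree has no branching, the interior vertices are annulus-base with one exceptional fiber, and the terminal vertex $v_n$ is a small Seifert fiber space all of whose boundary compresses under both fillings, which by the same Lemma \ref{Lens space SFS1} computation must be a disk base with exactly two exceptional fibers; $n\geq 1$ because $\widetilde U$ is not itself a Seifert fiber space.

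The main obstacle I anticipate is bookkeeping the iterated Dehn-filling slopes across the spine: one must verify that at \emph{every} level both the $\alpha$-filling and the $\beta$-filling fall into the ``$\pm 1$'' branch of Lemma \ref{Gordon2} rather than the cabling branch $r=pq$, and that the two resulting slopes on $T_i$ genuinely have distance greater than $1$ so that Cyclic Surgery applies — this is where I expect to lean hardest on the primality of $b_1,b_2$ and on $b_1\neq b_2$, mirroring Proposition 5.8 of \cite{Buck}. A secondary subtlety is ensuring the fibrations of adjacent pieces can be chosen compatibly (as in the ``refiber the Möbius-band piece'' step of Proposition \ref{Prop1}), so that ``exactly one exceptional fiber'' is a genuine invariant and not an artifact of a bad choice of Seifert structure; this is routine but must be stated. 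Everything else is a direct translation of the Buck–Mauricio argument combined with the lens-space algebra already used above.
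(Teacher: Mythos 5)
Your overall architecture --- inductive descent along the splitting tree, forcing each torus of $T$ to compress after the filling so that the intermediate pieces cap off to solid tori, the tree is linear, and the terminal vertex is handled separately --- is the same as the paper's, which simply applies Lemma \ref{v_0} inductively down the tree using the $\alpha$-filling alone, records that $v_i\cup\cdots\cup v_0\cup\widetilde{X_1}$ is a solid torus, and then observes that $v_n$ is a small Seifert fiber space with a single boundary torus which cannot be a solid torus (else $e_{n-1}$ would be compressible), hence is fibered over the disk with exactly two exceptional fibers. However, one of your steps is justified incorrectly. You invoke the Cyclic Surgery Theorem to show each $v_i$ is Seifert fibered, and to get its hypothesis you claim the two induced filling slopes have distance $>1$ ``because $b_1\neq b_2$ forces $\widetilde{b_1}\neq\widetilde{b_2}$.'' That only gives \emph{distinct} slopes, i.e. distance $\geq 1$, not $>1$; indeed the entire point of this section is that $d(X_1,X_2)$ may equal $1$ (the paper notes that $d(X_1,X_2)>1$ is already settled by Cyclic Surgery and forces a generalized Montesinos tangle), so if distance $>1$ were automatic there would be nothing left to prove. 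As written this step fails. It is also unnecessary: $\widetilde{U}$ is the double branched cover of an algebraic tangle, hence a graph manifold, and $T$ was chosen so that every complementary piece is already an atoroidal Seifert fiber space --- no Cyclic Surgery is needed anywhere in this proposition.

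If you do want a distance-$>1$ statement (for instance to run the Proposition \ref{Prop1}-style computation on the terminal piece), the correct source is Lemma \ref{Gordon1}: pushing the slopes $\alpha,\beta$ through the cable space $v_0$ multiplies their distance by $p_0^2\geq 4$, so the induced slopes on $T_0$, and a fortiori on every deeper torus, have distance $>1$ even when $\Delta(\alpha,\beta)=1$. Two smaller points: the Cyclic Surgery Theorem concerns fillings of a single torus boundary component, so it would have to be applied to $v_i\cup\cdots\cup v_n$ rather than to $v_i$ alone when $v_i$ has two boundary tori; and your assertion that each torus of $T$ ``must become compressible from the side away from the root'' has the direction reversed --- the side away from the root is untouched by the filling and the torus remains incompressible there, so the compressing disk must lie on the root side, which is exactly what makes the filled union of the pieces between the root and $T_i$ a solid torus.
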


\begin{proof}

By Lemma \ref{v_0}, $v_0(\alpha)$ is a solid torus, which means $\alpha$ filling on $v_0$ induces a Dehn filling on $v_1$ (i.e.the only one vertex on the 1-level since $v_0$ has only two boundary components). This is the same situation when we discussed about $v_0$, so we can use Lemma \ref{v_0} inductively to verify that $v_{i}$ ($i=1,\dots,n-1$) is a Seifert fiber space over an annulus with exact 1 exceptional fiber, and then the splitting graph is a linear tree shown as Figure \ref{lineartree}. Besides, $v_{i} \cup \dots \cup v_0 \cup \widetilde{X_1}$ is a solid torus, for $i=1,\dots,n-1$. Now we only need to work out the end piece $v_n$.

\begin{figure}[H]
\centering
\includegraphics[width=0.15\textwidth]{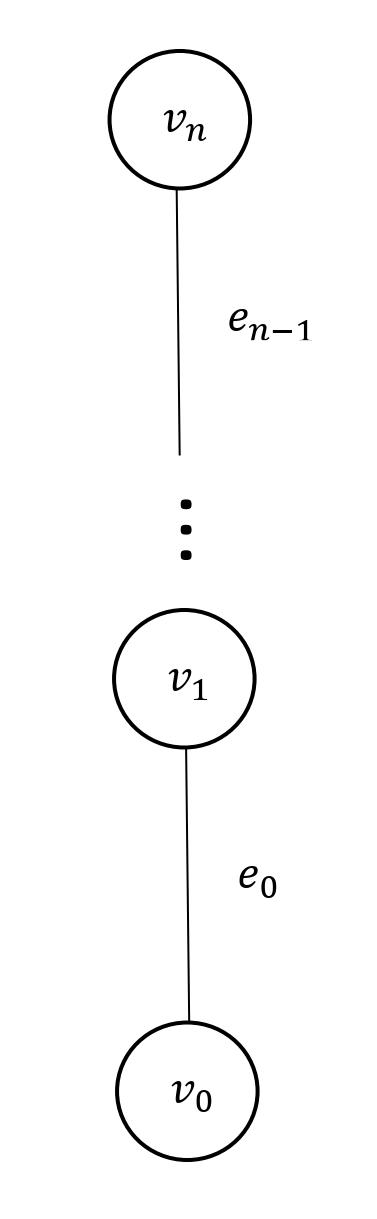}
\caption{\label{lineartree}The splitting graph of $\widetilde{U}$: $v_i$ is a Seifert fiber space over an annulus with exact one exceptional fiber, for $i=0,\dots,n-1$. $v_n$ is a Seifert fiber space over a disk with exact two exceptional fibers.}
\end{figure}

$v_n$ is a small Seifert fiber space with one boundary component. Only small Seifert fiber spaces $M(0,1;(\alpha_1,\beta_1),(\alpha_2,\beta_2))$ and $M(-1,1;)$ have one boundary component. Actually, $M(-1,1;)=M(0,1; (2,1), (2,-1))$. Therefore we can assume $v_n$ is a Seifert fiber space over a disk. It's impossible for $v_n$ to have only one exceptional fiber, for otherwise $v_n$ is a solid torus, and $e_{n-1}$ is not incompressible. So $v_n$ is a Seifert fiber space over a disk with exact two exceptional fibers. Obviously, $n \geq 1$ since $\widetilde{U}$ is not a Seifert fiber space.
\end{proof}

\begin{defn}
The $(p_0, q_0)$-cable of the $(p_1, q_1)$-cable of \dots $(p_k, q_k)$-torus knot is called an {\it iterated knot}, denoted by $[p_0,q_0; p_1, q_1; \dots; p_k, q_k]$, where $p_i \geq 2$, for $i=0,\dots,k$.
\end{defn}

\begin{prop}
\label{complement of an iterated knot}
Let the lens space $\widetilde{b_1}=T_1 \cup T_2$, where $T_i$ is a solid torus, for $i=1,2$. Then $\widetilde{U}=\widetilde{b_1} \setminus N(K)$, where $K$ is an iterated knot $[p_0,q_0; p_1, q_1; \dots ; p_n,q_n]$ in $T_1$ where $p_i \geq 2$ for $i=0,\dots, n$, and $n \geq 1$.
\end{prop}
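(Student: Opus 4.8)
The plan is to read off the knot $K$ directly from the linear JSJ decomposition of $\widetilde{U}$ furnished by Proposition \ref{linear tree}, building $K$ up one cabling at a time. Put $W_{-1}=\widetilde{X_1}=N(K)$ and, for $0\le i\le n$, set $W_i=\widetilde{X_1}\cup v_0\cup\cdots\cup v_i$, so that $W_n=\widetilde{b_1}$ and $W_i=W_{i-1}\cup v_i$, the gluing being along the torus $\partial W_{i-1}$, which is the JSJ torus between $v_{i-1}$ and $v_i$. By Proposition \ref{linear tree} (together with Lemma \ref{v_0} and the fact that $\widetilde{X_1}$ is a solid torus) each $W_i$ with $-1\le i\le n-1$ is a solid torus, each $v_i$ with $0\le i\le n-1$ is Seifert fibered over an annulus with exactly one exceptional fiber, and $v_n$ is Seifert fibered over a disk with exactly two exceptional fibers. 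For $0\le i\le n-1$, the solid tori $W_{i-1}\subset W_i$ have complement $v_i=W_i\setminus\mathrm{int}\,W_{i-1}$ a cable space (a Seifert fibered space over an annulus with one exceptional fiber is a cable space $C_{p_i,q_i}$ with $p_i\ge 2$); by the standard description of cable spaces this realises $W_{i-1}$ as a regular neighbourhood of a $(p_i,q_i)$-torus knot in $W_i$, i.e. the core of $W_{i-1}$ is the $(p_i,q_i)$-cable of the core of $W_i$. Iterating from $i=0$ up to $i=n-1$ shows that $K$, the core of $W_{-1}$, is the $(p_0,q_0)$-cable of the $(p_1,q_1)$-cable of $\cdots$ of the $(p_{n-1},q_{n-1})$-cable of the core $K_{n-1}$ of the solid torus $W_{n-1}$.

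It then remains to identify $K_{n-1}$. Since $\widetilde{b_1}=W_{n-1}\cup v_n$ with $W_{n-1}$ a solid torus, $K_{n-1}$ is a knot in the lens space $\widetilde{b_1}$ whose exterior is the Seifert fibered space $v_n$ over a disk with exactly two exceptional fibers, so I would argue exactly as in the Seifert fibered case (compare the proof of Theorem \ref{mainthm2}). Let $h$ be the fiber slope of $v_n$ on $\partial N(K_{n-1})$ and $\mu$ the meridian of $K_{n-1}$. Then $\mu\neq h$: otherwise $\widetilde{b_1}=v_n(\mu)$ would be a connected sum of two nontrivial lens spaces, hence reducible. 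And $\triangle(\mu,h)\le 1$: otherwise $\widetilde{b_1}=v_n(\mu)$ would be a Seifert fibered space over $S^2$ with three exceptional fibers of orders $\ge 2$, which is never a lens space. Hence $\triangle(\mu,h)=1$, the Seifert fibration of $v_n$ extends over $N(K_{n-1})$, and $K_{n-1}$ is a fiber of a Seifert fibration of $\widetilde{b_1}$ over $S^2$ with two exceptional fibers. Taking $\widetilde{b_1}=T_1\cup T_2$ to be the genus-one splitting associated with this fibration, with $T_j$ a neighbourhood of an exceptional fiber, $K_{n-1}$ is isotopic to a regular fiber lying on the Heegaard torus, that is a $(p_n,q_n)$-torus knot in $T_1$; because both exceptional fibers of $v_n$ have order $\ge 2$, this regular fiber winds at least twice longitudinally around each $T_j$, so $p_n\ge 2$. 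Combining the two paragraphs, $K=[p_0,q_0;p_1,q_1;\dots;p_n,q_n]$ is an iterated knot contained in $T_1$, and $n\ge 1$ by Proposition \ref{linear tree}.

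The cabling bookkeeping of the first step is routine; the crux, and the part I expect to be the main obstacle, is the last-piece analysis — excluding the possibility that $W_{n-1}\cup v_n$ is a lens space while the core of $W_{n-1}$ fails to be a torus knot. This comes down to two standard facts: Dehn filling a Seifert fibered space over a disk with two exceptional fibers along the fiber slope yields a connected sum of two lens spaces (hence a reducible manifold when both exceptional fibers have order $\ge 2$), and a Seifert fibered space over $S^2$ with three exceptional fibers of orders $\ge 2$ has non-cyclic fundamental group (its von Dyck triangle-group quotient is non-cyclic) and so is never a lens space. One also has to notice the borderline case $v_n\cong M(0,1;(2,1),(2,-1))=M(-1,1;)$: here the conclusion survives and $K_{n-1}$ is a $(2,q_n)$-torus knot, so it creates no difficulty. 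A lesser point requiring care is tracking, through the chain of cablings, which boundary torus of each $v_i$ is glued to $W_{i-1}$ and which framings determine the integers $q_i$; these choices are exactly the data encoded by the iterated-knot notation, so no explicit computation is needed.
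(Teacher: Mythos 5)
Your proof is correct and follows essentially the same route as the paper: both arguments exploit the linear JSJ tree from Proposition \ref{linear tree}, recognize each annular Seifert piece $v_i$ as a cable space contributing one cabling, and reduce to showing that the core of the outermost solid torus $W_{n-1}$ is a $(p_n,q_n)$-torus knot with $p_n\ge 2$ because its exterior $v_n$ is Seifert fibered over a disk with two exceptional fibers. The only differences are cosmetic — you run the induction from the inside out rather than from $v_n$ inward, and your $\triangle(\mu,h)=1$ analysis of the last piece spells out a step the paper asserts more tersely (that $W_{n-1}$ is a fibered neighborhood of an ordinary fiber of a Seifert fibration of $\widetilde{b_1}$ over $S^2$).
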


\begin{proof}
We already have that $v_n$ is a Seifert fiber space over a disk with exact 2 exceptional fibers, and $v_i$ is a Seifert fiber space over an annulus with exact 1 exceptional fiber, for $i=1,\dots,n-1$. $\widetilde{X_1}$ is a solid torus lying in $\widetilde{b_1}$, and it can be regarded as a tubular neighborhood of a knot $K$ in $\widetilde{b_1}$. $\widetilde{U}=\widetilde{b_1}\setminus N(K)$.

Let $r_i$ be the corresponding slope of the Dehn filling on $v_i$ induced by $r_{i-1}$ filling on $v_{i-1}$ where $i=0,\dots,n$ and $r_0=\alpha$. First of all, $v_{n-1}(r_{n-1})$ is a solid torus lying in the lens space $\widetilde{b_1}$ such that $\widetilde{b_1} \setminus v_{n-1}(r_{n-1})= v_n$, which is a Seifert fiber space over a disk with exact 2 exceptional fibers. Then $v_{n-1}(r_{n-1})$ is isotopic to a tubular neighborhood of a fiber, denoted by $F$, of some generalized Seifert fibration of the lens space $\widetilde{b_1}$ over $S^2$. The fiber $F$ is an ordinary fiber, for otherwise $v_{n}$ has at most 1 exceptional fiber which contradicts Proposition \ref{linear tree}. We regard $v_{n-1}(r_{n-1})$ as a tubular neighborhood of a $(p_n,q_n)$-torus knot in one of the solid tori of $\widetilde{b_1}$, without loss of generality $T_1$. We claim that $p_n \geq 2$. If $p_n=0$, then either $\widetilde{U}=\widetilde{b_1} \setminus N(K)$ is reducible, or $\widetilde{b_1}=S^3$, in which case $\widetilde{b_1} \setminus v_{n-1}(r_{n-1})=v_n$ is a solid torus instead of a Seifert fiber space over a disk with 2 exceptional fibers. If $p_n=1$, then $\widetilde{b_1} \setminus v_{n-1}(r_{n-1})=v_n$ is also a solid torus. $v_{n-2}(r_{n-2})$ is a solid torus lying in $v_{n-1}(r_{n-1})$ such that $v_{n-1}(r_{n-1}) \setminus v_{n-2}(r_{n-2})=v_{n-1}$ is a Seifert fiber space over an annulus with exact 1 exceptional fiber. So $v_{n-2}(r_{n-2})$ must lie in $v_{n-1}(r_{n-1})$ as a tubular neighborhood of a $(p_{n-1},q_{n-1})$-torus knot with $p_{n-1} \geq 2$. $p_{n-1} \neq 0$ for the same reason as $p_n \neq 0$, and if $p_{n-1}=1$ then $v_{n-1}$ has no exceptional fiber. Hence, $v_{n-2}(r_{n-2})$ lies in $T_1$ of $\widetilde{b_1}$ as a tubular neighborhood of a $(p_{n-1},q_{n-1})$-cable of $(p_{n},q_{n})$-torus knot with $p_n$, $p_{n-1} \geq 2$.

By induction, $v_0(\alpha)$ lies in $T_1$ of $\widetilde{b_1}$ as a tubular neighborhood of an iterated knot. Also the solid torus $\widetilde{X_1}$ lies in $T_1$ of $\widetilde{b_1}$ as a tubular neighborhood of $K=[p_0,q_0; p_1,q_1; \dots ; \\ p_n,q_n]$ with $p_i \geq 2$, for $i=0, \dots, n$ and $n \geq 1$. $\widetilde{U}=\widetilde{b_1} \setminus N(K)$ is the complement of an iterated knot in $T_1$ of $\widetilde{b_1}$.
\end{proof}

Now we want to find out what kind of iterated knots lying in a lens space admit a surgery to give another lens space. Actually, Gordon \cite{Gordon} has deeply studied on surgeries along an iterated knot in $S^3$ by using Lemma \ref{Gordon1} and \ref{Gordon2}. Here we just use the lemmas of Gordon and results about Seifert fiber space to study Surgeries along the iterated knot $K$ in the lens space $\widetilde{b_1}$.

\begin{prop}
\label{Surgery on iterated knot}
Let $\widetilde{b_1}=L(a,b)=T_1 \cup T_2$, where $(a,b)$ is a pair of relatively prime integers satisfying $0<\frac{b}{a}\leq 1 (or \,\, \frac{b}{a}=\frac{1}{0}=\infty)$, and $T_i$ is a solid torus, for $i=1,2$. Then $\widetilde{U}=L(a,b)\setminus N(K)$, where $K=[p_0, q_0; p_1, q_1]$ in $T_1$ of $\widetilde{b_1}$ with $q_0=2p_1q_1 \pm 1, p_0=2, p_1>1, |aq_1-bp_1|>1$. Only $r=4p_1q_1 \pm 1$ surgery along $K$ can produce another lens space and the manifold obtained by this Dehn surgery is $L(a \pm 4ap_1q_1 \mp 4b p^2_1, b \pm 4aq^2_1 \mp 4b p_1q_1)$.
\end{prop}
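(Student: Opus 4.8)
The plan is to repeat, one cabling level higher, the argument used for Propositions \ref{Prop1} and \ref{Prop2}. Write $K=[p_0,q_0;p_1,q_1]$ as the $(p_0,q_0)=(2,q_0)$-cable of $K_1=T_{p_1,q_1}\subset T_1$, and put $T=\partial N(K_1)$. By Proposition \ref{complement of an iterated knot} (here $n=1$), $T$ is up to isotopy the unique essential torus in $\widetilde{U}$, and splitting $\widetilde{U}$ along it yields the cable space $C_{2,q_0}=S^1\times D^2\setminus N(T_{2,q_0})$ and the piece $M=L(a,b)\setminus N(K_1)$. By Corollary \ref{Lens space SFS2}, $M=M(0,1;(p_1,-e),(aq_1-bp_1,ad-be))$ with $p_1d-q_1e=1$; since $p_1>1$ and $|aq_1-bp_1|>1$ this is a genuine Seifert fiber space over the disk with exactly two exceptional fibers, so $\partial M$ is incompressible.

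Next I would test each surgery slope $r=m/n$ on $K$ (normalized so that $n\geq 0$) against Lemma \ref{Gordon2} applied to the $(2,q_0)$-torus knot in $S^1\times D^2$; note $\gcd(2,q_0)=1$ since $q_0$ is odd. If $r=2q_0$, then $C_{2,q_0}(r)=S^1\times D^2\,\#\,L(2,q_0)$, so $\widetilde{U}(r)$ is a connected sum with the nontrivial lens space $L(2,q_0)$, hence reducible and not a lens space. If $r$ is neither $2q_0$ nor of the form $m=2nq_0\pm1$, then $C_{2,q_0}(r)$ is Seifert fibered with incompressible boundary, so $T$ stays essential in $\widetilde{U}(r)=M\cup_T C_{2,q_0}(r)$ — a compression of $T$ would exhibit $M$ or $C_{2,q_0}(r)$ as a solid torus, which neither of them is — and a lens space admits no essential torus, so $\widetilde{U}(r)$ is not a lens space. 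The only remaining possibility is $m=2nq_0\pm1$: then $C_{2,q_0}(r)=S^1\times D^2$, so $\widetilde{U}(r)$ is a Dehn filling of $M$, and since $\gcd(2,m)=1$, Lemma \ref{Gordon1} with winding number $w=p_0=2$ identifies it, exactly as in the proof of Theorem \ref{mainthm}, as $\widetilde{U}(r)=[L(a,b)\setminus N(T_{p_1,q_1})](m/4n)$.

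Now Lemma \ref{Lens space SFS3} gives $[L(a,b)\setminus N(T_{p_1,q_1})](m/4n)=M(0,0;(p_1,-e),(aq_1-bp_1,ad-be),(m-4np_1q_1,4n))$, which by Lemma \ref{Lens space SFS1} is a lens space iff one of $|p_1|$, $|aq_1-bp_1|$, $|m-4np_1q_1|$ equals $1$; as the first two exceed $1$, we must have $|m-4np_1q_1|=1$. Writing $q_0=2p_1q_1+\varepsilon$ and $m=2nq_0+\delta$ with $\varepsilon,\delta\in\{\pm1\}$, one computes $m-4np_1q_1=2n\varepsilon+\delta$, and $|2n\varepsilon+\delta|=1$ (with $n\geq1$) forces $n=1$ and $\delta=-\varepsilon$ (the remaining option $n=0$ is the meridional filling $\widetilde{U}(\infty)=L(a,b)=\widetilde{b_1}$). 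Hence the only lens-space surgery is $r=2q_0-\varepsilon=4p_1q_1+\varepsilon=4p_1q_1\pm1$. With $n=1$ the last Seifert pair is $(\varepsilon,4)$; using $(\pm1,4)\sim(1,\pm4)$ it absorbs into $(aq_1-bp_1,ad-be)$ to give $(aq_1-bp_1,\,ad-be+4\varepsilon(aq_1-bp_1))$, and evaluating the determinant formula of Lemma \ref{Lens space SFS1} — the same computation as in Proposition \ref{Prop2}, with the parameter $p$ there replaced by $4\varepsilon$ — produces $L(a+4\varepsilon ap_1q_1-4\varepsilon bp_1^2,\,b+4\varepsilon aq_1^2-4\varepsilon bp_1q_1)=L(a\pm4ap_1q_1\mp4bp_1^2,\,b\pm4aq_1^2\mp4bp_1q_1)$, as claimed.

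The only delicate point is the middle case above: Lemma \ref{Gordon1} is a statement about first homology, so a priori it determines $\widetilde{U}(r)$ only up to homology. It does determine $\widetilde{U}(r)$ here, because when $m=2nq_0\pm1$ the piece $C_{2,q_0}(r)$ is honestly a solid torus, so $\widetilde{U}(r)$ is literally a Dehn filling of $M$ and its filling slope on $\partial M$ is pinned down by the homology class computed from Lemma \ref{Gordon1} — precisely the step already carried out in the proof of Theorem \ref{mainthm}. Everything else is routine arithmetic with Seifert invariants, parallel to Propositions \ref{Prop1} and \ref{Prop2}.
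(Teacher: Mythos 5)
Your argument for the surgery-slope classification is sound and follows essentially the same route as the paper (Gordon's Lemmas \ref{Gordon1} and \ref{Gordon2} feeding into Lemma \ref{Lens space SFS3} and the determinant formula of Lemma \ref{Lens space SFS1}); your geometric handling of the ``generic slope'' case via incompressibility of $T$ in $M\cup_T C_{2,q_0}(r)$, and your remark about why the homological Lemma \ref{Gordon1} suffices once $C_{2,q_0}(r)$ is an honest solid torus, are if anything more careful than the paper's purely formulaic treatment. The arithmetic at the end ($n=1$, $\delta=-\varepsilon$, $r=4p_1q_1\pm1$, and the resulting $L(a\pm4ap_1q_1\mp4bp_1^2,\,b\pm4aq_1^2\mp4bp_1q_1)$) checks out.

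However, there is a genuine gap: the conditions $K=[p_0,q_0;p_1,q_1]$, $p_0=2$ and $q_0=2p_1q_1\pm1$ are part of the \emph{conclusion} of the proposition, not hypotheses, and you assume all three at the outset. Proposition \ref{complement of an iterated knot} only gives $K=[p_0,q_0;\dots;p_n,q_n]$ with $p_i\geq 2$ and $n\geq 1$; writing ``here $n=1$'' is not justified by anything you prove. The paper spends the second half of its proof showing that iteration depth $n\geq 2$ is impossible, because the three conditions $m=np_0q_0\pm1$, $m=np_0^2p_1q_1\pm1$ and $m=np_0^2p_1^2p_2q_2\pm1$ force $np_0^2p_1(q_1-p_1p_2q_2)\in\{0,\pm2\}$, which fails for $p_0,p_1\geq2$. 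Likewise, $p_0=2$ must be \emph{derived}: for general $p_0\geq2$ the two constraints $m=np_0q_0\pm1$ and $|m-np_0^2p_1q_1|=1$ give $np_0(q_0-p_0p_1q_1)\in\{0,\pm2\}$, whence $p_0=2$, $n=\pm1$ and $q_0=2p_1q_1\pm1$ — your computation of $m-4np_1q_1=2n\varepsilon+\delta$ only works after you have already substituted $q_0=2p_1q_1+\varepsilon$. Since the theorem that follows quotes this proposition precisely to conclude the form of $K$, these omissions are not cosmetic; you need to add the $p_0=2$ derivation and the exclusion of deeper iterated cables.
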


\begin{proof}
According to Proposition \ref{complement of an iterated knot}, when $n=1$, the knot $K=[p_0, q_0; p_1, q_1]$ which is a $(p_0, q_0)$-cable of $(p_1, q_1)$-torus knot $T_{p_1,q_1}$ lying in $T_1$ of $L(a,b)$ with $p_1 \geq 2$, $p_0 \geq 2$. By Lemma \ref{Gordon2}, there are three cases.

(1)The surgery slope $r \neq p_0q_0$ and $\neq m/n, \, m=np_0q_0 \pm 1$.

$K(r)$ contains an essential torus, which can not be a lens space.

(2)The surgery slope $r=p_0q_0$.

We get a non-prime manifold.

(3)The surgery slope $r=m/n, \, m=np_0q_0 \pm 1$.

By Lemma \ref{Gordon1} and \ref{Gordon2},
\[K(r)=[L(a,b) \setminus N(T_{p_1,q_1})](m/(np^2_0)).\]
By Lemma \ref{Lens space SFS3},
\[[L(a,b) \setminus N(T_{p_1,q_1})](m/np_0^2) = M(0,0;(p_1, -e), (aq_1-bp_1,ad-be), (m-np^2_0p_1q_1,np^2_0)),\]
where $p_1d-q_1e=1$. We want it to be a lens space, and that happens if and only if one of $|m-np^2_0p_1q_1|$, $|aq_1-bp_1|$ and $|p_1|$ equals 1. In fact, $|aq_1-bp_1|>1$ for the same reason as $p_n=p_1>1$ since a $(p_1,q_1)$-torus knot in $T_1$ of $L(a,b)$ is also a torus knot with winding number $|aq_1-bp_1|$ in $T_2$ of $L(a,b)$. So we have $|m-np^2_0p_1q_1|=1$, and there are two cases.

(\romannumeral1) $m-np^2_0p_1q_1=1$.\\
According to the assumption, we also have $m=np_0q_0 \pm 1$. If $m=np_0q_0+1$, then $np_0(q_0-p_0p_1q_1)=0$. If $n=0$, then $m=1$ and $K(r)=L(a,b)$, while we want it to be another lens space. Besides, $p_0 \geq 2$, and $q_0-p_0p_1q_1 \neq 0$ since $gcd(p_0,q_0)=1$. Thus it is impossible. If $m=np_0q_0-1$, it implies that
\[np_0(q_0-p_0p_1q_1)=2.\]
Therefore $p_0=2$ (since $p_0 \geq 2$), $n=\pm 1$, $q_0-p_0p_1q_1=\pm 1$ so $q_0=2p_1q_1 \pm 1$. $m=\pm p_0q_0-1=\pm4p_1q_1+1$, then $r=\frac{m}{n}=4p_1q_1 \pm 1$. Then Using the formula in Lemma \ref{Lens space SFS1},
\begin{align*}
&[L(a,b) \setminus N(T_{p_1,q_1})](m/np_0^2)\\
&=M(0,0; (p_1,-e), (aq_1-bp_1,ad-be), (1,\pm p^2_0))\\
&=M(0,0;(p_1,-e), (aq_1-bp_1,ad-be \pm p^2_0(aq_1-bp_1)))\\
&=L(a \pm 4ap_1q_1 \mp 4b p^2_1, b \pm 4aq^2_1 \mp 4b p_1q_1).
\end{align*}

(\romannumeral2) $m-np^2_0p_1q_1=-1$.\\
As the argument above, if $m=np_0q_0-1$, then $np_0(q_0-p_0p_1q_1)=0$ which is impossible. Thus $m=np_0q_0+1$, it implies that \[np_0(p_0p_1q_1-q_0)=2.\]
Therefore $p_0=2$ (since $p_0 \geq 2$), $n=\pm 1$, $p_0p_1q_1-q_0=\pm 1$ so $q_0=2p_1q_1 \mp 1$. $m=\pm p_0q_0+1=\pm4p_1q_1-1$, then $r=\frac{m}{n}=4p_1q_1 \mp 1$. Then Using the formula in Lemma \ref{Lens space SFS1},
\begin{align*}
&M(0,0;(p_1,-e), (aq_1-bp_1,ad-be), (1,\mp p^2_0))\\
&=M(0,0;(p_1, -e), (aq_1-bp_1, ad-be \mp p^2_0(aq_1-bp_1)))\\
&=L(a \mp 4ap_1q_1 \pm 4b p^2_1, b \mp 4aq^2_1 \pm 4b p_1q_1).
\end{align*}
When $n=2$, the knot $K=[p_0, q_0;p_1, q_1; p_2, q_2]$ lying in $T_1$ of $L(a,b)$ with $p_i \geq 2$, for $i=0,1,2$. Let $K_1=[p_1, q_1; p_2, q_2]$. Also by Lemma \ref{Gordon2}, there are three cases.

(1)The surgery slope $r \neq p_0q_0$ and $\neq m/n, \, m=np_0q_0 \pm 1$.

$K(r)$ contains an essential torus, which can not be a lens space.

(2)The surgery slope $r=p_0q_0$.

We obtain a non-prime manifold.

(3)The surgery slope $r=m/n, \, m=np_0q_0 \pm 1$.

By Lemma \ref{Gordon1} and \ref{Gordon2},
\[K(r)=[L(a,b) \setminus N(K_1)](m/(np^2_0)).\]
We want it to be a lens space, and that may happen when $m=np^2_0p_1q_1 \pm 1$ by the same argument when $n=1$. Then, by Lemma \ref{Gordon1} and \ref{Gordon2},
\[K(r)=[L(a,b) \setminus N(K_1)](m/(np^2_0))=[L(a,b) \setminus N(T_{p_2,q_2})](m/(np^2_0p^2_1)).\]
By Lemma \ref{Lens space SFS3},
\begin{gather*}
[L(a,b) \setminus N(T_{p_2,q_2})](m/(np^2_0p^2_1))\\
= M(0,0;(p_2,-e), (aq_2-bp_2, ad-be), (m-np^2_0p^2_1p_2q_2, np^2_0p^2_1)),
\end{gather*}
where $p_2d-q_2e=1$. It can be a lens space if and only if $|m-np^2_0p^2_1p_2q_2|=1$, since if one of $|aq_2-bp_2|$ and $|p_2|$ equals 1, then $v_n=v_2$ has only one exceptional fiber. So far we have 3 equations shown as the following:
\begin{align}
&m=np_0q_0 \pm 1             \label{eq9}\\
&m=np^2_0p_1q_1 \pm 1        \label{eq10}\\
&m=np^2_0p^2_1p_2q_2 \pm 1   \label{eq11}
\end{align}
By Equation (\ref{eq10}) and (\ref{eq11}), we have $np^2_0p_1(q_1-p_1p_2q_2)=0$ or $\pm2$. It is impossible since $p_0, p_1 \geq 2$, $gcd(p_1,q_1)=1$ and $n \neq 0$ for otherwise $K(r)=L(a,b)$. Therefore, when $n=2$, surgeries along $K$ can not produce another lens space.

Using the same argument as $n=2$, we have $n=3,4,\dots$ is also impossible. Then it concludes the proposition.
\end{proof}

It not hard to find that $\widetilde{U}$ satisfying the equations \ref{eq7} and \ref{eq8} is contained in $\widetilde{O}$ satisfying the equations \ref{eq3} and \ref{eq4} when $\widetilde{O}$ is irreducible toroidal but not Seifert fibered. Actually $\widetilde{U}$ is a special case of the previous $\widetilde{O}$ (i.e.$p=2$). So we can easily get the following theorem.

\begin{thm1}
Suppose
\begin{align*}
N(U+X_1)&=b_1          \\
N(U+X_2)&=b_2,
\end{align*}
where $X_1$ and $X_2$ are rational tangles, $U$ is an algebraic tangle but not a generalized Montesinos tangle, $b_1$ and $b_2$ are 2-bridge links with $b_1 \neq b_2$. The system of tangle equations has solutions if and only if one of the following holds:

(1) There exist 2 pairs of relatively prime integers $(a,b)$, $(p_1,q_1)$ satisfying $0<\frac{b}{a}\leq 1 (or \,\, \frac{b}{a}=\frac{1}{0}=\infty), p_1>1$, and $|aq_1-bp_1|>1$ such that $b_1=b(a,b)$ and $b_2=b(a+4ap_1q_1-4b p^2_1, b+4aq^2_1-4b p_1q_1)$. Solutions up to equivalence are shown as the following:
\begin{figure}[H]
\centering
\subfigure[]{
\includegraphics[width=0.24\textwidth]{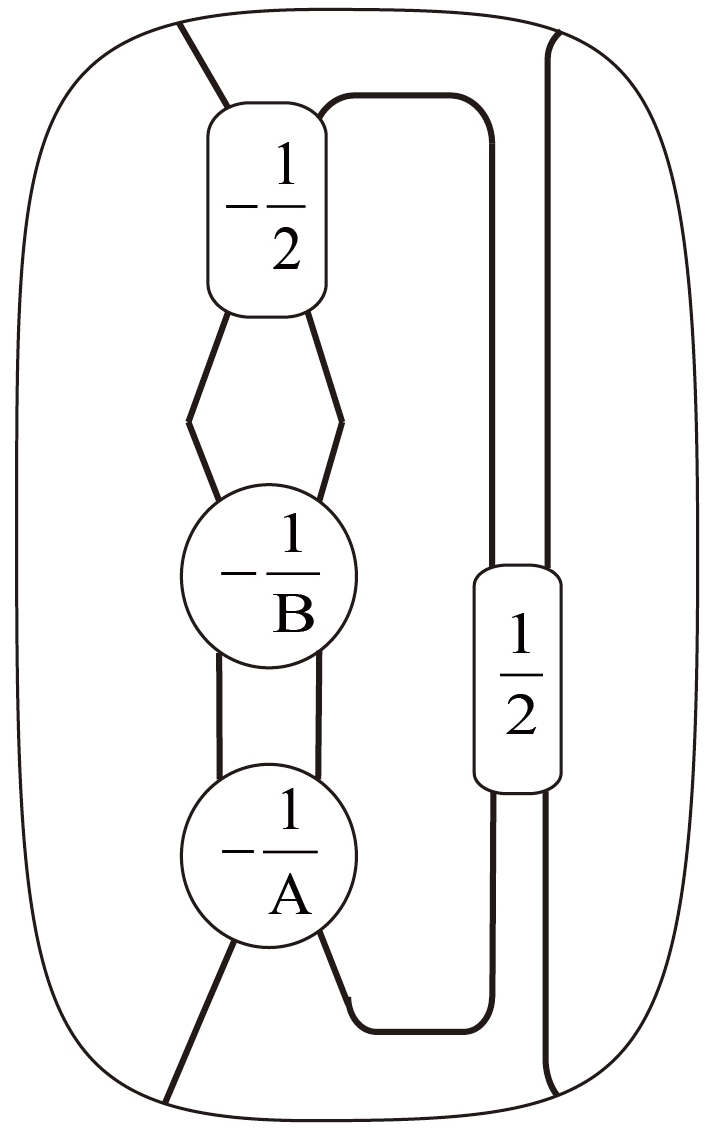}}\quad
\subfigure[]{
\includegraphics[width=0.24\textwidth]{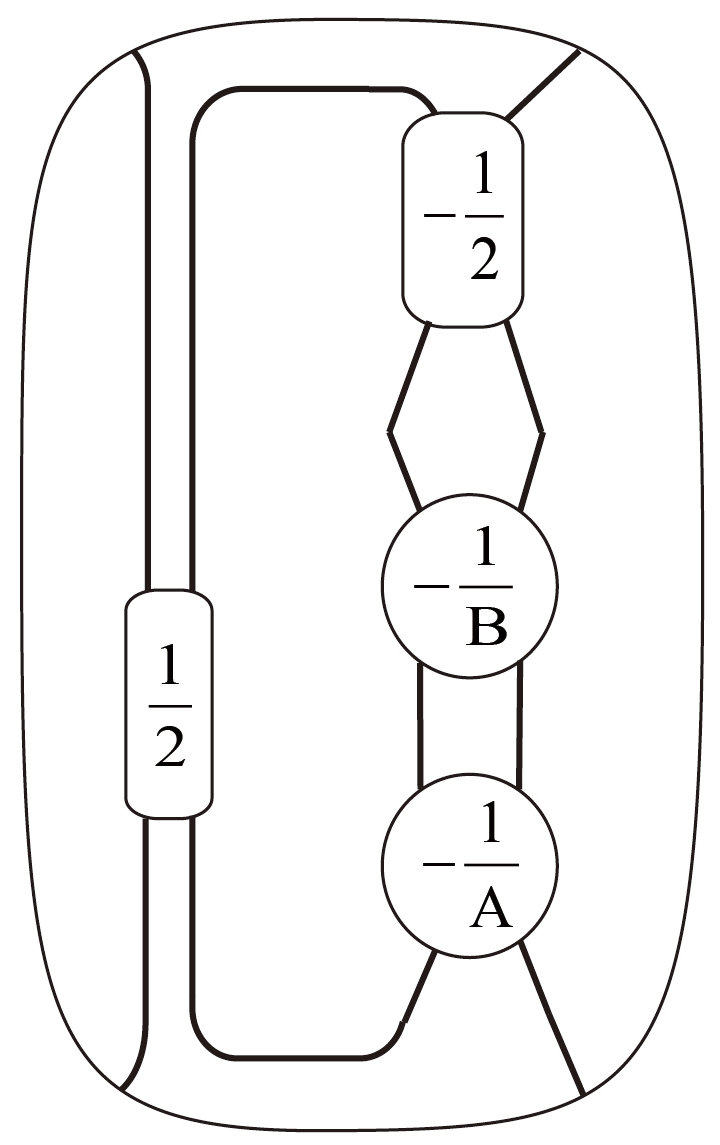}}\quad
\subfigure{
\includegraphics[width=0.16\textwidth]{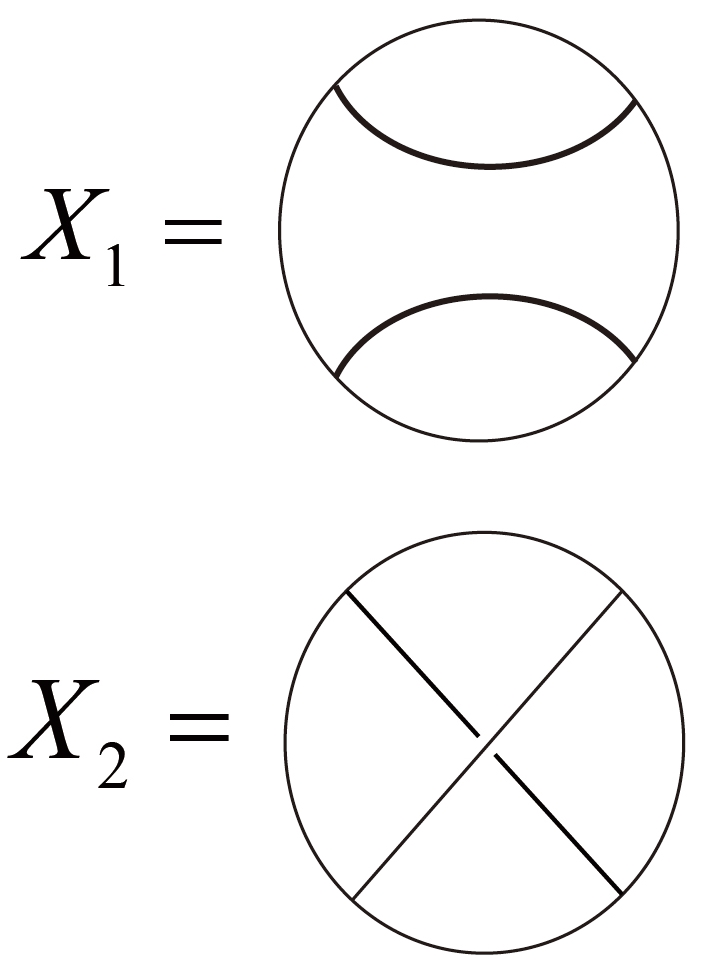}}
\caption{$U=$ the tangle in (a) or (b) where $A=\frac{-e}{p_1}$, $B=\frac{ad-be}{aq_1-bp_1}$ (or $A=\frac{ad-be}{aq_1-bp_1}$, $B=\frac{-e}{p_1}$), and $p_1d-q_1e=1$ with $d,e\in \mathbb{Z}$. $X_1=0$-tangle and $X_2=-1$-tangle. (Note that choosing different $d$ and $e$ such that $p_1d-q_1e=1$ has no effect on the tangle $U$.)}
\end{figure}
(2) There exist 2 pairs of relatively prime integers $(a,b)$, $(p_1,q_1)$ satisfying $0<\frac{b}{a}\leq 1 (or \,\, \frac{b}{a}=\frac{1}{0}=\infty), p_1>1$ and $|aq_1-bp_1|>1$ such that $b_1=b(a,b)$ and $b_2=b(a-4ap_1q_1+4b p^2_1, b-4aq^2_1+4b p_1q_1)$. Solutions up to equivalence are shown as the following:
\begin{figure}[H]
\centering
\subfigure[]{
\includegraphics[width=0.24\textwidth]{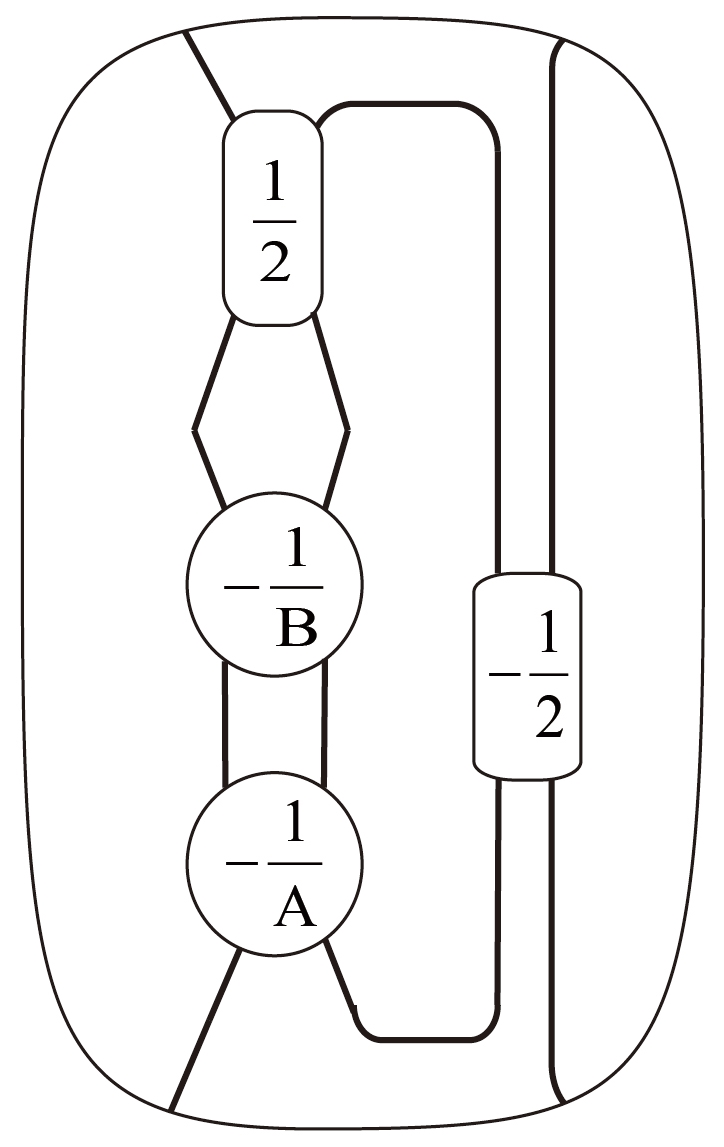}}\quad
\subfigure[]{
\includegraphics[width=0.24\textwidth]{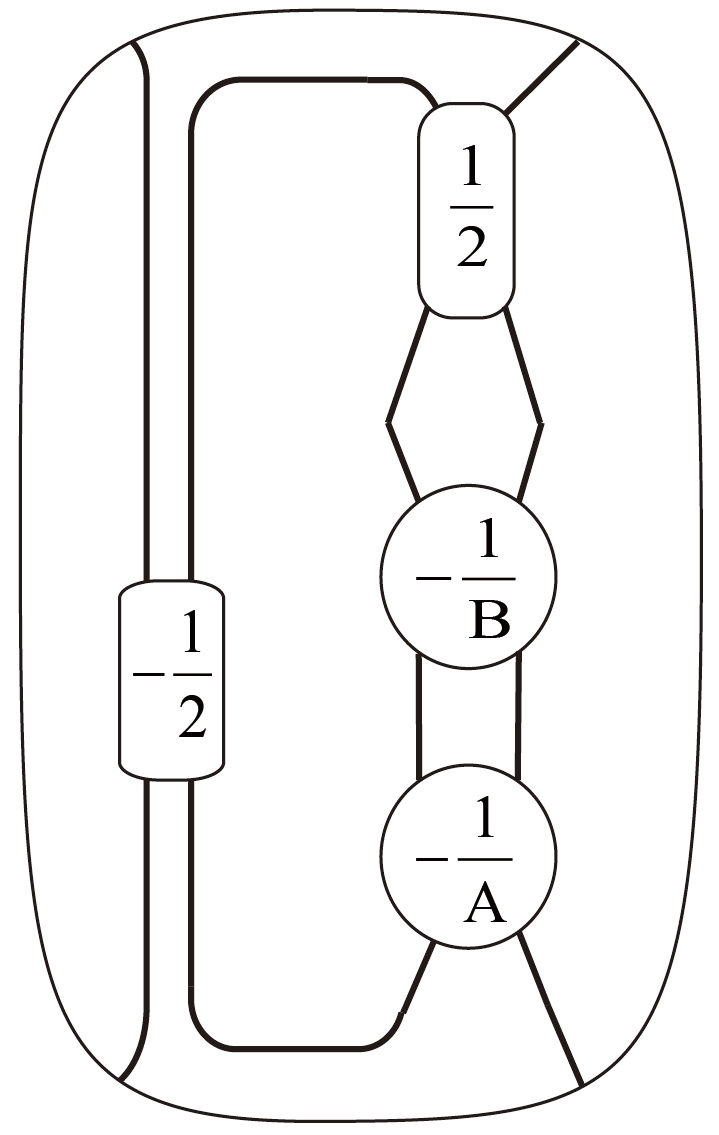}}\quad
\subfigure{
\includegraphics[width=0.16\textwidth]{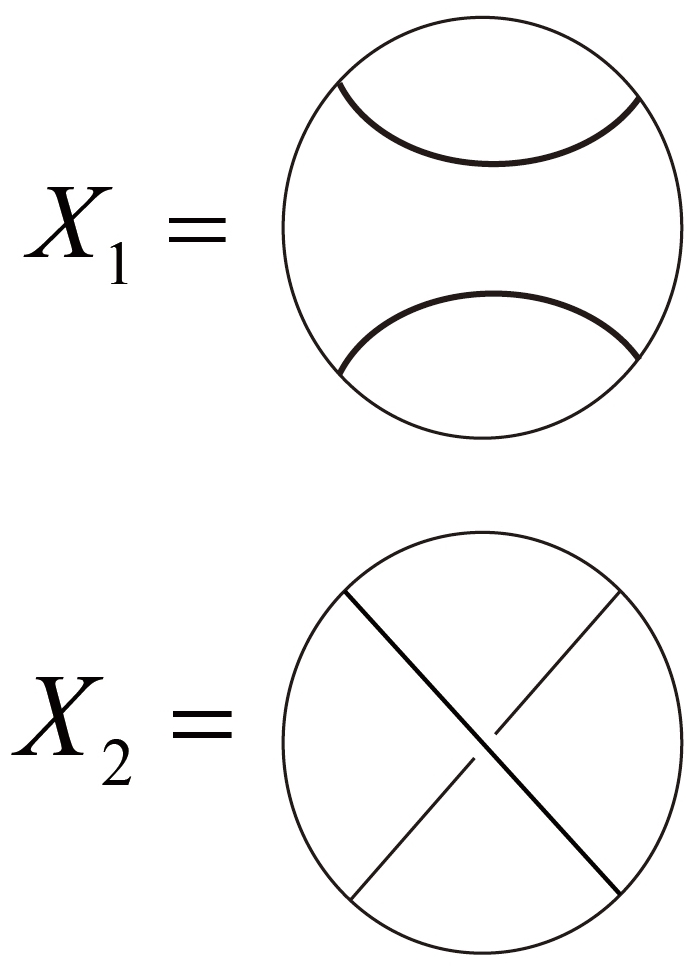}}
\caption{$U=$ the tangle in (a) or (b) where $A=\frac{-e}{p_1}$, $B=\frac{ad-be}{aq_1-bp_1}$ (or $A=\frac{ad-be}{aq_1-bp_1}$, $B=\frac{-e}{p_1}$), and $p_1d-q_1e=1$ with $d,e\in \mathbb{Z}$. $X_1=0$-tangle and $X_2=1$-tangle. (Note that choosing different $d$ and $e$ such that $p_1d-q_1e=1$ has no effect on the tangle $U$.)}
\end{figure}
\end{thm1}

\begin{proof}
We just use the same method as in Theorem \ref{mainthm}. Suppose $b_1=b(a,b)$ for a pair of relatively prime integers $(a,b)$ satisfying $0<\frac{b}{a}\leq 1 (or \,\, \frac{b}{a}=\frac{1}{0}=\infty)$. According to Proposition \ref{Surgery on iterated knot}, $\widetilde{U}$ is the complement of the iterated knot $K=[p_0, q_0; p_1, q_1]$ with $q_0=2p_1q_1 \pm 1, p_0=2$, $p_1>1, |aq_1-bp_1|>1$ lying in one of the solid torus of the lens space $L(a,b)$. This is a special case of $\widetilde{O}$ in Theorem \ref{mainthm}, i.e.$p=2$ for $\widetilde{O}$. Therefore, we can find all the tangles whose double branched cover are $\widetilde{U}$, by letting $p=2$ in all the solutions for $O$ in Theorem \ref{mainthm}.

The only difference is the surgery slopes on $K$, since here we want to obtain a lens space instead of a connected sum of two lens spaces. Obviously, only $\infty$-surgery along $K$ gives the original $L(a,b)$. By Proposition \ref{Surgery on iterated knot}, only $r=4p_1q_1 \pm 1$-surgery produces another lens space. By carefully studying the images of these slopes under the covering maps, we obtain the pairs of rational tangle solutions up to equivalence as above.
\end{proof}


\begin{thebibliography}

\bibitem{Baker1}
K. L. Baker. {\it Surgery descriptions and volumes of Berge knots I: Large volume Berge knots}. Journal of Knot Theory and Its Ramifications, \textbf{17} (2008), no.9, 1077-1097.

\bibitem{Baker2}
K. L. Baker. {\it Surgery descriptions and volumes of Berge knots II: Descriptions on the minimally twisted five chain link}. Journal of Knot Theory and Its Ramifications, \textbf{17} (2008), no.9, 1099-1120.

\bibitem{Baker}
K. L. Baker. {\it A Cabling Conjecture for knots in lens spaces}. Bol. Soc. Mat. Mex.,  \textbf{20} (2014), no.2, 449-465.

\bibitem{Bonahon}
F. Bonahon and L. Siebenmann. {\it New geometric splittings of classical knots, and the classification and symmetries of arborescent knots.} (2010) preprint.

\bibitem{Buck}
D. Buck and M. Mauricio. {\it Connect sum of lens spaces surgeries: application to Hin recombination}. Mathematical Proceedings of the Cambridge Philosophical Society, Cambridge University Press, \textbf{150} (2011), no.3, 505-525.

\bibitem{Conway}
J. H. Conway. {\it On enumeration of knots and links and some of their related properties. Computational Problems in Abstract Algebra}. Pergamon Press, New York (1969), 329-358.

\bibitem{Culler}
M. C. Culler, C. McA. Gordon, J. Luecke, and P. B. Shalen. {\it Dehn surgery on knots}. Annals of Mathematics, \textbf{125} (1987), no.2, 237-300.

\bibitem{Darcy}
I. K. Darcy. {\it Solving unoriented tangle equations involving 4–plats}. Journal of Knot Theory and Its Ramifications, \textbf{14} (2005), no.8, 993-1005.

\bibitem{DarcySumners}
I. K. Darcy and D. W. Sumners. {\it Rational tangle distances on knots and links}. Mathematical Proceedings of the Cambridge Philosophical Society, Cambridge University Press, \textbf{128} (2000), no.3, 497-510.

\bibitem{C.ernst}
C. Ernst. {\it Tangle equations}. Journal of Knot Theory and Its Ramifications, \textbf{5}(02) (1996), 145-159.

\bibitem{Sumners}
C. Ernst and D. W. Sumners. {\it A calculus for rational tangles: applications to DNA recombination}. Mathematical Proceedings of the Cambridge Philosophical Society, \textbf{108} (1990), no.3, 489-515.

\bibitem{Farb}
B. Farb and D. Margalit. {\it A Primer on Mapping Class Groups (PMS-49)}. Princeton University Press (2011).

\bibitem{Fyodor Gainullin}
F. Gainullin. {\it Reducible surgery in lens spaces and seiferters}. (2015) arXiv:1505.04428.

\bibitem{Gordon}
C. McA. Gordon. {\it Dehn surgery and satellite knots}. Transactions of the American Mathematical Society, \textbf{275} (1983), no.2, 687-708.

\bibitem{Gordon1}
C. McA. Gordon and J. Luecke. {\it Knots with unknotting number 1 and essential Conway spheres}. Algebraic \& Geometric Topology, \textbf{6} (2006), no.5, 2051-2116.

\bibitem{Greene}
J. E. Greene. {\it The lens space realization problem}. Annals of Mathematics, \textbf{177} (2013), no.2, 449-511.

\bibitem{JN}
M. Jankins and W. D. Neumann. {\it Lectures on Seifert manifolds (volume 2 of Brandeis Lecture Notes)}. Brandeis University (1983).

\bibitem{Johannson}
K. Johannson. {\it Homotopy equivalences of 3-manifolds with boundaries}. Springer, \textbf{761} (2006).

\bibitem{Scott}
W. H. Meeks III and P. Scott. {\it Finite group actions on 3-manifolds}. Inventiones mathematicae, \textbf{86} (1986), no.2, 287-346.

\bibitem{Paoluzzi}
L. Paoluzzi. {\it On hyperbolic type involutions}. Rend. Istit. Mat. Univ. Trieste, \textbf{32} (2001), 221-256.

\bibitem{Schubert}
H. Schubert. {\it Knoten mit zwei br$\ddot{u}$cken}. Mathematische Zeitschrift, \textbf{65} (1956), 133-170.

\end{thebibliography}
\end{document}